\documentclass[a4paper,11pt]{article}
\usepackage[pagewise]{lineno}
\usepackage{amsmath}
\usepackage{amssymb}
\usepackage{mathrsfs}
\usepackage{amsfonts}
\usepackage{amsthm}
\usepackage{pstricks, pst-node, pst-text, pst-3d,psfrag,ulem}
\usepackage{graphicx}
\usepackage{indentfirst}
\usepackage{enumerate}
\usepackage{subfigure}
\usepackage{cite}
\usepackage[colorlinks=true]{hyperref}
\hypersetup{urlcolor=blue, citecolor=red}
\usepackage{soul}
\usepackage{footmisc}
\usepackage{authblk}

\theoremstyle{plain}
\newtheorem{thm}{Theorem}[section]
\newtheorem{prop}[thm]{Proposition}
\newtheorem{cor}[thm]{Corollary}
\newtheorem{lem}[thm]{Lemma}
\newtheorem{thmm}{Theorem}[subsection]
\theoremstyle{definition}
\newtheorem{defn}[thm]{Definition}

\theoremstyle{remark}
\newtheorem{rmk}[thm]{\textbf{Remark}}

\newtheorem*{hypo*}{\textbf{Hypothesis}}

\numberwithin{equation}{section}

\usepackage{authblk}

\makeatletter 
\@addtoreset{equation}{section}
\makeatother  
\renewcommand\theequation{\oldstylenums{\thesection}%
	.\oldstylenums{\arabic{equation}}}

\begin{document}
	
	\title{Statistical behavior of systems with nested invariant cones}

\author[1]{Xu Cheng\footnote{supported by the China Scholarship Council (No.202406840118).}}
\author[2]{Yufeng Zhang\footnote{Corresponding author: yufeng27@ualberta.ca (Y. Zhang). Supported by the National Natural Science Foundation of China (No.12401240).}} 
\author[3]{Dun Zhou\footnote{supported by the National Natural Science Foundation of China (Nos.12671213,12331006).}}
	
    \affil[1,3]{\footnotesize{School of Mathematics and Statistics, Nanjing University of Science and Technology, Nanjing, Jiangsu 210094, P. R. China}}
	\affil[2]{\footnotesize{Department of Mathematical and Statistical Sciences, University of Alberta, Edmonton, AB T6G 2G1, Canada}}

\date{}
	\renewcommand{\thefootnote}{\fnsymbol{footnote}}
\maketitle

\setcounter{footnote}{0}
\renewcommand{\thefootnote}{\arabic{footnote}}
	\maketitle
	
\begin{abstract}
Nested invariant cones (NICs) encode hierarchical order and
oscillation structures in many finite and infinite dimensional
dynamical systems. We investigate how this hierarchy constrains the
Birkhoff center and the supports of invariant probability measures.
For eventually compact, dissipative semiflows that are uniformly
eventually strongly monotone with respect to NICs, we prove that every
connected component $B$ of the Birkhoff center lies in a single cone
layer: there exists $j>0$ such that $B$ is unordered with respect to
all lower-level cones and strongly ordered with respect to all cones
at level $j$ or higher. The same conclusion holds for every connected
component of the support of an invariant probability measure.

Under an additional transversality condition involving a
codimension-$d$ linear subspace, each such component admits a
homeomorphic embedding into $\mathbb R^d$. If $d=1$ or $2$ and the
restriction of the semiflow to its global attractor extends to a
flow, then the system has zero topological entropy, independently of
the dimension of the original phase space.

We apply the theory to bidirectional cyclic feedback systems and
scalar parabolic equations on the circle. In the parabolic case, the
natural infinite family of zero-number NICs fails to be uniformly
eventually strongly monotone, even for the heat equation; we overcome
this obstruction by constructing a finite family of perturbed NICs.
In both applications, every connected component of the Birkhoff
center admits a planar embedding, and the topological entropy is zero.
	\end{abstract}

	\section{Introduction}

    Invariant cones provide a fundamental geometric framework for the study of monotone dynamical systems. The invariance of a cone under the dynamics naturally ensures that the order relation induced by the cone is preserved along trajectories. This monotonicity property lies at the heart of the structural theory of monotone systems. In the classical monotone systems, Hirsch \cite{H82-1,H85-2,H88-S,HS05} investigated the generic asymptotic behavior of such systems through the partial order induced by a closed convex cone. This framework was subsequently extended to systems preserving a cone of finite rank, and generic asymptotic behavior has been established \cite{FWW19,FO91,Z-22}. Those classical theories are primarily formulated in terms of a single cone.

    However, in many high and infinite dimensional systems, the underlying order structure is more naturally encoded by a nested hierarchy of invariant cones. Representative examples include tridiagonal competitive-cooperative systems, cyclic feedback systems, and scalar parabolic equations either on the circle or on a bounded interval with separated boundary conditions \cite{An,MalletParetSmith1990,Smillie1984,MalletParetSell1996a,MalletParetSell1996b,Matano,Matano2}.
    Following Tere\v{s}\v{c}\'{a}k \cite{Tl}, the nested invariant cones (NICs for short) are described as $$\{0\}=C^0\subset C^1\subset\cdots\subset C^N=X,$$ where $C^i$ is a invariant cone with rank $k_i$ and $k_i$ is increasing (see Definition \ref{NIC}).
    The nested hierarchy distinguishes different oscillation levels of the dynamics and is reflected in the linearized flow through a decomposition into corresponding invariant subspaces. The cone ranks are closely related to the cumulative dimensions of these subspaces \cite{FO91,Tl}. This structure also constrains invariant manifolds and connecting orbits, particularly through restrictions on Morse indices and intersections between stable and unstable manifolds \cite{Fusco1987,FOT,Joly,Tl,Margaliot-21}. Thus, NICs should not be viewed merely as an increasing family of cones; its layers jointly encode dynamical information that cannot be captured by any single cone. This motivates the study of dynamical systems with NICs.

 A fundamental objective in the theory of dynamical systems is to understand the global structure of the dynamics, including its statistical properties, recurrent behavior, dynamical complexity, and structural stability. For several important classes of systems admitting NICs structures, including scalar parabolic equations on the circle \cite{Joly,Oliva} and bidirectional cyclic feedback systems \cite{XCYWDZ,MPer}, generic Morse-Smale properties and related structural stability results have been established. However, beyond these equation-specific settings, fundamental questions about the global dynamics of general systems with NICs, in particular, their Morse-Smale properties and statistical behavior remain largely open. The Birkhoff center, studied in this paper, defined as the closure of the set of recurrent points, provides a natural starting point for studying the global structure of a system. Moreover, the support of every invariant probability measure is contained in the Birkhoff center \cite{J20,M12}, making it a natural bridge between topological recurrence and statistical dynamics.

Motivated by this connection, we investigate how the NICs
$\{C^i\}_{i=0}^{N}$ constrain the order and topological structure of
the Birkhoff center, thereby providing a structural description of
the recurrent and statistical dynamics of the system. Following Tere\v{s}\v{c}\'{a}k \cite{Tl}, the semiflow $\Phi_t$ generated by systems with NICs $\{C^i\}_{i=0}^{N}$, preserves a series of orders ``$\simeq_{C^i}$'' induced by $C^i$, that is, $\Phi_t(x)\simeq_{C^i}\Phi_t(y)$ whenever $x\simeq_{C^i} y$ and $t\ge t^i_*$ (see Definition \ref{NIC}). Throughout the paper, we write $x\simeq_{C^i} y$ if $y-x\in{C^i}$. A subset $Y$ is called unordered with $C^i$ if none of its points are related by ``$\simeq_{C^i}$''; ordered with $C^i$ if any of its points are related by ``$\simeq_{C^i}$''.

One of our main results provides an order-theoretic description of
the Birkhoff center $\mathcal B(\Phi)$. Under the assumptions of
compactness, dissipativity, and uniform eventual strong monotonicity,
we prove that every connected component $B$ of $\mathcal B(\Phi)$ is
confined to a single layer of the nested cone hierarchy. More
precisely, there exists an index $j>0$ such that $B$ is unordered
with respect to $\{C^i\}_{i=0}^{j-1}$ and strongly ordered with
respect to $\{C^i\}_{i=j}^{N}$; see
Theorem~\ref{Thm_2.3}. Equivalently, for any two distinct points
$x,y\in B$,
\[
y-x\in\operatorname{Int}C^j\setminus C^{j-1}.
\]
Since the Birkhoff center contains all recurrent dynamics, including
equilibria, periodic orbits, and nonperiodic recurrent sets, this
result reveals a global order structure that cannot be obtained by
studying individual orbits alone. In particular, the recurrent
dynamics within each connected component cannot spread across
different layers of the nested cone hierarchy.

A notable feature shared by many important systems admitting NICs is the presence of an additional hyperplane structure associated with a finite-codimensional linear subspace. Representative examples include tridiagonal competitive--cooperative systems, cyclic feedback systems, and scalar parabolic equations either on the circle or on a bounded interval with separated boundary conditions \cite{An,MD,MalletParetSmith1990,Smillie1984, MalletParetSell1996a,MalletParetSell1996b,Matano,Matano2}. Precisely, the finite-codimensional transversality condition is:
\begin{itemize}
		\item
        There exists a closed linear subspace $\Pi\subset X$ of codimension $d$ such that, for any $1\le  i\le N$, any $x-y\in C^i$, and any $t>0$, if $\Phi_t(x)-\Phi_t(y)\in C^i\setminus C^{i-1}$, then $\Phi_t(x)-\Phi_t(y)\notin\Pi$.			
	\end{itemize}
The significance of the finite-codimensional transversality condition is already evident in the earlier work of Tere\v{s}\v{c}\'ak \cite{Tl}. Under the standing NICs hypotheses together with this condition, he proved that the $\omega$-limit set of every precompact orbit in an autonomous or time-periodic system admits a homeomorphic embedding into $\mathbb R^d$. In particular, when $d=1$, the $\omega$-limit set consists of a single equilibrium in the autonomous case and a single periodic orbit in the time-periodic case; when $d=2$, a Poincar\'e--Bendixson-type alternative holds. He further showed that the corresponding structures persist under sufficiently small $C^1$ perturbations. These results describe the asymptotic dynamics of individual orbits. 

In contrast, we establish a global counterpart for the Birkhoff center, which contains recurrent points arising from different orbits. More precisely, we consider the following finite-codimensional transversality condition, which is weaker than that considered by Tere\v{s}\v{c}ak \cite{Tl}:

\begin{enumerate}[\bf(H3)]
    \item There exists a closed linear subspace $\Pi\subset X$ of codimension $d$
such that, for any $1\le i\le N$ and any distinct $x, y\in \mathcal{B}(\Phi)$, if for any $s>0$, there exist $\tilde{x},\tilde{y}\in\mathcal{B}(\Phi)$
with $\Phi_s(\tilde{x})=x$ and $\Phi_s(\tilde{y})=y$ satisfying
$\Phi_t(\tilde{x})-\Phi_t(\tilde{y})\in
(C^i\setminus C^{i-1})\cup\{0\}$ for all $t\ge0$, then $x-y\notin\Pi$.
    \end{enumerate}
Under the finite-codimensional transversality condition {\rm(\hyperref[H-3]{H3})} and the assumptions of Theorem~\ref{Thm_2.3}, we prove that every connected component of the Birkhoff center admits a homeomorphic embedding into $\mathbb R^d$; equivalently, it is homeomorphic to a compact connected set in $\mathbb R^d$ endowed with the induced dynamics (see Theorem~\ref{Thm_2.7}). The dynamics on the component is therefore
topologically conjugate to the induced dynamics on its image. This provides a dimension-independent reduction of recurrent
dynamics: even when the original phase space is infinite-dimensional,
each connected recurrent component is topologically at most
$d$-dimensional.

Since the support of every invariant probability measure is contained
in the Birkhoff center, our main results immediately yield a
corresponding structural description of invariant measures. More
precisely, for every connected component $B$ of the support of an
invariant probability measure, there exists an index $j>0$ such that
$B$ is unordered with respect to $\{C^i\}_{i=0}^{j-1}$ and strongly
ordered with respect to $\{C^i\}_{i=j}^{N}$. Moreover, if the system
satisfies the finite-codimensional transversality condition {\rm(\hyperref[H-3]{H3})}, then
$B$ admits a homeomorphic embedding into $\mathbb R^d$; see
Theorem~\ref{Thm_2.10}. Thus, the measure-supported dynamics inherits
the same single-layer order structure and low-dimensional topological
constraints as the Birkhoff center.
If the codimension $d$ in {\rm(\hyperref[H-3]{H3})} is either $1$ or $2$ and the
restriction of the semiflow to the global attractor extends to a
continuous flow, then the low-dimensional embedding implies that the system has zero
topological entropy; see Theorem~\ref{Cor_2.4}. Notably, this
conclusion is independent of the dimension of the original phase
space and therefore applies even to infinite-dimensional systems.

We also apply our main results to two classes of systems:
bidirectional cyclic feedback systems and scalar parabolic equations
on the circle. For the former, an integer-valued Lyapunov function
gives rise to a finite family of NICs satisfying the required
hypotheses. For the latter, the natural infinite family of
zero-number NICs fails to satisfy uniform eventual strong
monotonicity, even for the linear heat equation. We demonstrate this
failure by an explicit counterexample and overcome it by constructing
a finite family of perturbed NICs. As a consequence, we obtain a
structural description of the Birkhoff centers and the supports of
invariant measures for these systems, and prove that both classes have
zero topological entropy; see Theorems~\ref{5-1} and~\ref{5-2}.

It is worth emphasizing that the Birkhoff center and the supports of
invariant measures encode the global organization of recurrent and
measure-supported dynamics, which cannot be fully recovered by
studying the limit set of an individual orbit. The main difficulty
is that the cone layer detected from a pair of recurrent points may
initially depend on that pair. To obtain a global description of a
connected component of the Birkhoff center, one must show that this
layer index is constant throughout the component and that the
component cannot cross the boundary between two adjacent cone layers.

To overcome this difficulty, we study the recurrent set as a whole
and establish a limit-set dichotomy for recurrent points
(Theorem~\ref{limit_set_dichotomy}). This dichotomy serves as the
main technical ingredient in the proofs of our structural results.
Its proof proceeds in three stages. First, we characterize the order
structure of the $\omega$-limit set associated with each recurrent
point (Subsection~\ref{OS}). Next, we examine how recurrent points
interact with the boundaries of the NICs
(Subsection~\ref{IRSaNB}). Finally, we extend this boundary analysis
to their $\omega$-limit sets (Subsection~\ref{iln}), thereby obtaining
the desired dichotomy.

The remainder of the paper is organized as follows. In
Section~\ref{s2}, we introduce the necessary notation and basic
definitions. In Section~\ref{S-3}, we formulate the standing
hypotheses and state the main results. Section~\ref{s3} establishes
the limit-set dichotomy for recurrent points, which is the main
technical ingredient in the proofs of our results. In
Section~\ref{S-5}, we analyze the structure of the Birkhoff center
and prove the main theorems. Section~\ref{S-6} applies the abstract
results to bidirectional cyclic feedback systems and scalar
parabolic equations on the circle. Finally,
Appendix~\ref{app:hausdorff} collects auxiliary results concerning
the Hausdorff distance and the separation index, while
Appendix~\ref{app:counterexample} presents a counterexample showing
that the natural zero-number NICs need not satisfy uniform eventual
strong monotonicity.

\section{Dynamical systems with NICs}\label{s2}
    In this section, we introduce dynamical systems with NICs. First, we present some basic notions of dynamical systems, including the Birkhoff center, in Subsection \ref{bn}. Then, in Subsection \ref{NICs}, we provide the definition of NICs and discuss some properties of systems with NICs.
    
	\subsection{Basic dynamical notions}\label{bn}
   
    Consider a \textit{continuous semiflow} $\Phi(t,x)$ (or simply $\Phi_t(x)$) on a Banach space $(X,\|\cdot \|_X)$, i.e., a continuous map $\Phi:\mathbb{R}^+\times X\rightarrow X$ satisfying $\Phi_{0}(x)=x$ and $\Phi_{t}(\Phi_{s}(x))=\Phi_{t+s}(x)$ for all $t,s\geq0$. A continuous semiflow $\Phi_t$ on $(X,\|\cdot \|_X)$ is said to be \textit{eventually compact} if there is $\tau > 0$ such that for any bounded set $B \subset X$, $\Phi_{\tau}(B)$ is relatively compact in $X$, where $\tau$ is called an eventually compact time of $\Phi_{t}$. Let $x\in X$, the \textit{positive orbit} of $x$ is denoted by $O^+(x) = \{\Phi_{t}(x):t\ge 0\}$; a \textit{negative orbit} of $x$ is denoted by $O^-(x) = \{z\in X: \Phi_{t}(z)=x \text{ for some } t>0\}$. It is well known that any point in an invariant set $A$ (i.e. $\Phi_{t}A=A$ for any $t \geq 0$) admits a negative orbit (see \cite[Section 2]{JKHALE}).

    The \textit{$\omega$-limit set} $\omega(x)$ of $x$ is defined by $\omega(x)=\bigcap_{t\geq0}\overline{\bigcup_{s\geq t}\Phi_{s}(x)}$, where the closure of a set $S\subset X$ is denoted by $\bar{S}$. A point $x\in X$ is called a \textit{recurrent point} if $x\in \omega(x)$.
	Let $\mathcal{R}(\Phi)$ denote the set of all recurrent points of $\Phi_t$ in $X$. The closure of $\mathcal{R}(\Phi)$ in $X$ is called the \textit{Birkhoff center}, denoted by $\mathcal{B}(\Phi)$, i.e.,
    $$\mathcal{B}(\Phi)=\overline{\left\{x\in X:x\in\omega(x)\right\}}.$$ 
    A \textit{global attractor} for continuous semiflow $\Phi_t$ is a compact invariant nonempty set $\Gamma \subset X$ satisfying $\lim_{t \to +\infty}\sup_{a \in \Phi_t(B)} \inf_{b\in \Gamma} \left \| a-b \right \|_X=0$ for every bounded set $B \subset X$. Clearly, if the continuous semiflow $\Phi_t$ has a global attractor, then the Birkhoff center $\mathcal{B}(\Phi)$ exists and is invariant.

   \subsection{Notions about NICs}\label{NICs}
   In this subsection, we introduce NICs. To this end, we first present the definition of $k$-cone, which is a key constituent of NICs.
   \begin{defn}
       A closed subset $C$ of $X$ is called a \textit{$k$-cone} of $X$ if it satisfies: 
       \begin{enumerate}[{\rm (i)}]
           \item for any $v\in C$ and $l\in \mathbb{R}$, it has $lv\in C$;
           \item $\max\left\{\operatorname{dim}W:W\subset C \text{ is a linear subspace}\right\}=k$.
       \end{enumerate}
   \end{defn}
    A \(k\)-cone \(C\) is called a \textit{solid \(k\)-cone} if $\operatorname{Int} C \ne \emptyset$. We explain the order ``$\simeq_C$" naturally induced by a $k$-cone $C$, that is, $x\simeq_C y$ whenever $y-x\in C$. We write $x\sim_{C} y$ if $y-x\in C\setminus\{0\}$ and $x\approx_{C} y$ if $y-x\in \text{Int}C$. A subset $S\subset X$ is \textit{unordered with $C$} if $S$ cannot contain distinct points related by $``\simeq_C"$; \textit{ordered with $C$} if any two distinct points in $S$ are related by $``\sim_C"$; \textit{strongly ordered with $C$} if any two  distinct points in $S$ are related by $``\approx_{C}"$.

	A continuous semiflow $\Phi_t$ is called \textit{monotone w.r.t \(k\)-cone $C$} if $x\sim_{C} y$ implies $\Phi_{t}(x)\simeq_{C}  \Phi_{t}(y)$ for any $t>0$; \textit{strongly monotone w.r.t solid \(k\)-cone $C$} if $x\sim_{C} y$ implies $\Phi_{t}(x)\approx_{C}  \Phi_{t}(y)$ for any $t>0$. A continuous semiflow $\Phi_t$ is called \textit{uniformly eventually strongly monotone} (UESM) w.r.t solid \(k\)-cone $C$ if there exists $T(C)>0$ such that $x\sim_{C} y$ implies $\Phi_{t}(x)\approx_{C}  \Phi_{t}(y)$ for any $t\ge  T(C)$, where $T(C)$ is called the \textit{UESM time w.r.t $C$}. 
	
	Now, we are going to present some notations about NICs and UESM semiflows w.r.t NICs.

    \begin{defn}\label{NIC}
        A sequence of closed sets $\{C^{i}\}_{i=0}^{N}$ ($N \in \mathbb { N }\cup \{+ \infty \}$) is called \textit{nested invariant cones} (NICs) for a continuous semiflow $\Phi_t$ on $X$, if the following conditions hold:
        \begin{enumerate}[{\rm (i)}]
        \item $\{ 0 \}=C ^ { 0 }\subset \dotsi \subset C ^ { i-1 } \subset C ^{ i }\subset \dotsi \subset C ^ { N } = X$;
        \item $C^i$ is a solid $k _ { i }$-cone for all $0 < i < N$, where $k _ { i-1 } < k _ { i }$ for all $1 < i < N$;
        \item  for each $1\leq i< N$, there exists $t_*^i\ge 0$ such that for any $x,y\in X$, if $x-y\in C^i$, then $\Phi_t(x)-\Phi_t(y)\in C^i$ for every $t\ge t_*^i$.
        \end{enumerate}
    \end{defn}

    \begin{defn}
        A continuous semiflow $\Phi_t$ on $X$ is said to be \textit{uniformly eventually strongly monotone} (UESM) w.r.t NICs $\{C^i\}_{i=0}^{N}$, if $\Phi_t$ is UESM w.r.t $C^i$ for every $1\le i<N$. 
	\end{defn}

    \begin{rmk}
    In the definition of NICs, the number of cones $N$ can be taken either finite or infinite values. Subsection \ref{finite} provides that a bidirectional cyclic feedback system is a system with NICs where $N$ is finite. Conversely, Subsection \ref{infinite} presents a parabolic equation on $S^{1}$ to illustrate NICs with infinite $N$, induced by the number of zeros in a function space.
    \end{rmk}

    \begin{rmk}
    The structure of NICs provides a hierarchical framework for analyzing system dynamics through a series of $k$-cones, which plays a crucial role in addressing complex dynamical problems. For instance, Fusco and Oliva \cite{Fusco1987,FOT} utilized NICs to address spatial decomposition and spectral problems, resolving transversality issues regarding invariant manifolds. For further details about NICs, see \cite{Tl, Margaliot-21,DZ}.
    \end{rmk}

	\section{Hypotheses and main results}\label{S-3}
    \subsection{Hypotheses}
	Throughout the paper, we impose the following standing hypotheses:
    
    \begin{enumerate}[\bf(H1)]
    \item The continuous semiflow $\Phi_t$ is eventually compact, and possesses a
    global attractor $\Gamma$ on a Banach space $(X,\|\cdot\|_X)$.\label{H-1}
    \end{enumerate}

    \begin{enumerate}[\bf(H2)]
    \item The continuous semiflow $\Phi_t$ is uniformly eventually strongly monotone (UESM) w.r.t nested  invariant cones (NICs) $\{C^i\}_{i=0}^{N}$.\label{H-2}
    \end{enumerate}

    \begin{enumerate}[\bf(H3)]
    \item There exists a closed linear subspace $\Pi\subset X$ of codimension $d$
such that, for any $1\le i\le N$ and any distinct $x,y\in\mathcal{B}(\Phi)$, if for any $s>0$, there exist $\tilde{x},\tilde{y}\in\mathcal{B}(\Phi)$
with $\Phi_s(\tilde{x})=x$ and $\Phi_s(\tilde{y})=y$ satisfying
$\Phi_t(\tilde{x})-\Phi_t(\tilde{y})\in (C^i\setminus C^{i-1})\cup\{0\}$ for all $t\ge0$, then $x-y\notin\Pi$.\label{H-3}
    \end{enumerate}

\begin{rmk}
In Tere\v{s}\v{c}ak's setting \cite{Tl}, the transversality condition requires that, for any \(1\le i\le N\), any \(x,y\in X\) with \(x-y\in C^i\), and every \(t>0\), if \(\Phi_t(x)-\Phi_t(y)\in C^i\setminus C^{i-1}\), then \(\Phi_t(x)-\Phi_t(y)\notin\Pi\). Hypothesis {\rm(\hyperref[H-3]{H3})} is weaker than this condition. Indeed, suppose that Tere\v{s}\v{c}ak's transversality condition holds and that \(x,y\in\mathcal{B}(\Phi)\) satisfy the premise of {\rm(\hyperref[H-3]{H3})}. For any \(s>0\), choose \(\widetilde{x},\widetilde{y}\in\mathcal{B}(\Phi)\) such that \(\Phi_s(\widetilde{x})=x\) and \(\Phi_s(\widetilde{y})=y\), with \(\widetilde{x}-\widetilde{y}\in C^i\) and \(x-y=\Phi_s(\widetilde{x})-\Phi_s(\widetilde{y})\in C^i\setminus C^{i-1}\). Tere\v{s}\v{c}ak's transversality condition therefore yields \(x-y\notin\Pi\). Hence Tere\v{s}\v{c}ak's condition implies {\rm(\hyperref[H-3]{H3})}, while the converse need not hold in general. In contrast, Hypothesis {\rm(\hyperref[H-3]{H3})} requires much less. It only needs to be checked for certain pairs $x,y\in\mathcal{B}(\Phi)$. For such pairs, their backward representatives stay in the same cone layer. Moreover, we only need to verify $x-y\notin\Pi$, rather than checking all pairs in $X$ at every positive time. 
\end{rmk}

\begin{rmk}
If the semiflow $\Phi_t$ extends to a flow on $\mathcal{B}(\Phi)$, then hypothesis {\rm(\hyperref[H-3]{H3})} admits a more transparent interpretation: there exists a closed linear subspace $\Pi\subset X$ of codimension $d$ such that, for any $1\le i\le N$ and any distinct points $x,y\in\mathcal{B}(\Phi)$, 
\begin{equation*}
    \forall t\in \mathbb{R},\ \Phi_t(x)-\Phi_t(y)\in C^i\setminus C^{i-1}
\quad\Longrightarrow\quad x-y\notin\Pi.
\end{equation*}

\end{rmk}

\subsection{Main results}
	
		The following theorems are the main results of our paper.
	\renewcommand{\thethmm}{\Alph{thmm}}	
	\begin{thmm}\label{Thm_2.3}
		Assume that {\rm(\hyperref[H-1]{H1})}-{\rm(\hyperref[H-2]{H2})} hold. Then, for any connected component $B$ of the Birkhoff center $\mathcal{B}(\Phi)$, there exists $j>0$ such that $B$ is
        \begin{enumerate}[{\rm (a)}]
        \item strongly ordered  with $\left \{ C^i \right \} _{i=j}^{N}$, i.e., strongly ordered with $C^i$ for every $i=j,\cdots,N$, and
        \item unordered with $\left \{ C^i \right \}_{i=0}^{j-1}$, i.e., unordered with $C^i$ for every $i=0,\cdots,j-1$.
        \end{enumerate}
        
    \end{thmm}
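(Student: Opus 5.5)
Fix a connected component $B$ of $\mathcal B(\Phi)$. For distinct $x,y\in B$, consider the \emph{layer index}
\[
\iota(x,y)=\min\{i: y-x\in C^i\}.
\]
It is well defined because $C^N=X$, and it is positive because $x\neq y$. Since each $C^i$ is symmetric under $v\mapsto lv$, we have $\iota(x,y)=\iota(y,x)$. By construction $y-x\in C^{\iota}\setminus C^{\iota-1}$. The theorem amounts to two claims:
\begin{enumerate}[(1)]
\item $y-x\in\operatorname{Int}C^{\iota(x,y)}$ for all distinct $x,y\in B$;
\item $\iota$ is constant on pairs in $B$.
\end{enumerate}
Given both, set $j=\iota$. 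Then $y-x\in\operatorname{Int}C^j\subset\operatorname{Int}C^i$ for every $i\ge j$, and $y-x\notin C^{i}$ for every $i<j$. These are exactly (a) and (b).

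\textbf{Step 1: strong order at the exact layer.} Take first $x,y$ recurrent, and put $i=\iota(x,y)$. By (H2) and $y-x\in C^i\setminus\{0\}$, we get $\Phi_t(y)-\Phi_t(x)\in\operatorname{Int}C^i$ for all $t\ge T(C^i)$. To transfer this back to time $0$ we use recurrence. I expect the limit-set dichotomy for recurrent points (Theorem~\ref{limit_set_dichotomy}) to give this directly, presumably in the following form: either $\omega(x)$ and $\omega(y)$ interact in a way that forces $y-x\in\operatorname{Int}C^i$, or the pair is unordered with respect to $C^i$. The heuristic is as follows. Choose times $t_n\to\infty$ with $\Phi_{t_n}x\to x$ and, by compactness of the global attractor, $\Phi_{t_n}y\to y'\in\omega(y)$. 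Then $y'-x\in\operatorname{Int}C^i$, or $y'-x$ lies on the boundary, in which case a boundary point would have to persist along the orbit. The latter contradicts UESM via the boundary analysis. The delicate point is that we need $y'$ to coincide with $y$, which requires a joint-recurrence argument. This is exactly what the dichotomy is designed for, so I will cite it rather than redo it. For general points of $B$ (limits of recurrent points) I expect Step 1 to require the dichotomy version stated for $\mathcal B(\Phi)$ directly. The reason is that $\operatorname{Int}C^i$ is open, so strong order is not closed under limits.

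\textbf{Step 2: constancy of the index via connectedness.} For fixed $x\in B$, partition $B\setminus\{x\}$ into the sets $U_i=\{y: y-x\in\operatorname{Int}C^i\setminus C^{i-1}\}$. By Step 1 these cover $B\setminus\{x\}$. Each $U_i$ is relatively open, because $\operatorname{Int}C^i$ is open and, since $\operatorname{Int}C^{i-1}\ne\emptyset$ and the cones are nested, points near $\operatorname{Int}C^i\setminus C^{i-1}$ stay outside $C^{i-1}$. This uses the fact that $\operatorname{Int}C^i\cap C^{i-1}$ is empty. I would verify that by a rank argument: $C^i\setminus C^{i-1}$ is relatively open, and Step 1 already places all differences in it. The pair index is therefore locally constant. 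Removing a point may disconnect $B$, so I would argue on $B\times B\setminus\Delta$ instead. Alternatively, note that if $\iota$ took two values, the set of pairs with difference in $C^{i}$ would be closed (cones are closed) and also open (by Step 1 together with the openness of interiors), contradicting connectedness. That argument needs connectedness of $B\times B$ minus the diagonal, which is delicate. So instead I would fix $i$ and show that $E=\{(x,y):y-x\in C^i\}\cup\Delta$ is closed and open in $B\times B$. It is closed by closedness of the cone. It is open near off-diagonal points by Step 1. Near diagonal points, openness needs $C^i$ to contain a neighborhood-type set of differences, which generally fails.

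\textbf{Main obstacle.} The hardest part is therefore the diagonal: showing that nearby points of $B$ cannot switch layers, so that $\iota$ cannot jump as $y\to x$. The paper stresses exactly this difficulty, namely that a component cannot cross the boundary between adjacent layers. My plan is to use the dichotomy (Theorem~\ref{limit_set_dichotomy}) to show that the set $\{y\in B:\iota(x,y)\le i\}$ is open and closed in $B$ for each fixed $x$. Closedness would come from the cones being closed, together with recurrence to handle $y\to x$. Openness would come from Step 1. Connectedness of $B$ then gives constancy of $\iota$.
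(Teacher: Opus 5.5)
Your reduction to claims (1) and (2) is correct. Claim (1) for pairs of recurrent points also follows from Theorem \ref{limit_set_dichotomy}, since $x\in\omega(x)$ and $y\in\omega(y)$. But the proposal stops where the theorem's real content begins.

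\textbf{Step 1 for non-recurrent points.} For points of $B$ that are only limits of recurrent points, you appeal to a ``dichotomy version stated for $\mathcal B(\Phi)$'' that is not available. As you note yourself, strong order does not pass to limits, so approximation alone cannot give it. The missing idea is the paper's intersection principle (Lemma \ref{IPNC}). It works by trading the closed boundary condition for two open conditions at different times.
\begin{enumerate}
\item Suppose $x$ is recurrent and $z\in\mathcal B(\Phi)\cap\partial C^j_x$ with $z\ne x$.
\item By invariance, pick $z^*\in\mathcal B(\Phi)$ and $x^*\in\omega(x)$ with $\Phi_\tau(z^*)=z$ and $\Phi_\tau(x^*)=x$, where $\tau\ge T^j_*$. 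Then $x^*$ is recurrent, because $\omega(x^*)=\omega(x)\ni x^*$.
\item UESM forces $z^*-x^*\notin C^j$, since otherwise $z-x$ would lie in $\operatorname{Int}C^j$.
\item UESM also gives $\Phi_{2\tau}(z^*)-\Phi_{2\tau}(x^*)=\Phi_\tau(z)-\Phi_\tau(x)\in\operatorname{Int}C^j$.
\item Both conditions are open, so they survive replacing $z^*$ by a nearby recurrent $z_n$.
\item Theorem \ref{limit_set_dichotomy} applied to $(x^*,z_n)$ then yields incompatible layer indices.
\end{enumerate}
Even this only handles recurrent base points. Non-recurrent base points in $B$ need a further argument of the same type (Proposition \ref{Blayer2}).

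\textbf{Step 2: constancy of the index.} Your final plan cannot work as literally stated. The set $\{y\in B: y-x\in C^i\}$ always contains $x$. If it were clopen in the connected set $B$, it would equal $B$, which is false for $i<\iota$. What Step 1 actually gives is clopenness in $B\setminus\{x\}$, hence constancy of $\iota(x,\cdot)$ only on each component of $B\setminus\{x\}$. That is exactly the obstruction you identified, and ``recurrence to handle $y\to x$'' does not address it.

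The paper's device (Proposition \ref{Blayer}) is a second use of the intersection principle, at an auxiliary base point.
\begin{enumerate}
\item Suppose layers $j<k$ both occur relative to a recurrent $x$, and let $m_j\in B$ lie in layer $j$.
\item By the intersection principle, $B\cap C^j_x$ and $B\setminus\operatorname{Int}C^j_x$ meet only at $x$. Hence both are connected, and the second is nondegenerate since it contains a layer-$k$ point.
\item Choose $p\ne x$ in $B\setminus\operatorname{Int}C^j_x$ close enough to $x$ that $m_j-p\in\operatorname{Int}C^j$.
\item Choose a recurrent $p^*$ near $p$ with $m_j-p^*\in\operatorname{Int}C^j$ and $p^*-x\notin C^j$.
\item The intersection principle at $p^*$ splits $B\cap C^j_x$ into the disjoint closed pieces $B\cap C^j_x\cap C^j_{p^*}\ni m_j$ and $(B\cap C^j_x)\setminus\operatorname{Int}C^j_{p^*}\ni x$. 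This contradicts connectedness.
\end{enumerate}
A common index across base points then follows from $\iota(x_1,x_2)=\iota(x_2,x_1)$.

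\textbf{A minor error.} Your assertion $\operatorname{Int}C^i\cap C^{i-1}=\emptyset$ is false for $i\ge2$, since this set contains $\operatorname{Int}C^{i-1}\neq\emptyset$. You do not need it: $\operatorname{Int}C^i\setminus C^{i-1}$ is open simply as an open set minus a closed one.
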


    \begin{rmk}
        Theorem \ref{Thm_2.3} presents feasible and effective information for the recurrence of systems with NICs. From a geometric perspective, NICs decompose $X$ as a combination of layers of NICs, i.e., $X\setminus\{0\}=\cup_{i=1}^{N}C^{i}\setminus C^{i-1}$. In this view, Theorem \ref{Thm_2.3} reveals a geometric structure of the Birkhoff center $\mathcal{B}(\Phi)$, showing that any connected component of $\mathcal{B}(\Phi)$ lies on a single layer of NICs. Since the Birkhoff center captures non-periodic recurrence which is recognized as the first indication of complicated asymptotic behavior, Theorem \ref{Thm_2.3} also demonstrates that, recurrent behaviors in systems with NICs possess a special order structure and thus complicated asymptotic behaviors can only happen in every layer of NICs.
    \end{rmk}

    \begin{thmm}\label{Thm_2.7}
        Assume that {\rm(\hyperref[H-1]{H1})}-{\rm(\hyperref[H-3]{H3})} hold. Then, every connected component $B$ of the Birkhoff center is homeomorphic to a compact invariant connected set in \(\mathbb{R}^d\). 
	\end{thmm}

    \begin{rmk}
        Assumption {\rm(\hyperref[H-3]{H3})} governs the dynamics in each layer of NICs by introducing a linear subspace of codimension $d$. Under {\rm(\hyperref[H-3]{H3})}, Theorem \ref{Thm_2.7} demonstrates that every connected component $B$ of the Birkhoff center is essentially $d$-dimensional, regardless of the dimension of $X$ (even if it is infinite). It is a reduction of topological dimension, which significantly simplifies the analysis of the asymptotic behavior of systems.
    \end{rmk}

    A direct consequence of Theorem \ref{Thm_2.3} and Theorem \ref{Thm_2.7} is a characterization of the structure of the supports of invariant measures. A Borel probability measure $\mu$ on $X$ is called invariant under $\Phi$ if $\mu\bigl(\Phi_t^{-1}(A)\bigr)=\mu(A)$ for every Borel set $A\subset X$ and every $t\geq 0$. The support of $\mu$, denoted by $\operatorname{supp}(\mu)$, is the complement of the union of all open sets of $\mu$-measure zero.
    Since $\text{supp}(\mu)\subset \mathcal{B}(\Phi)$ (see, e.g., \cite[p.28]{M12} or \cite[Appendix A]{J20}), Theorem \ref{Thm_2.3} and Theorem \ref{Thm_2.7} immediately imply the following result:
    \begin{thmm}\label{Thm_2.10}
		Assume that {\rm(\hyperref[H-1]{H1})}-{\rm(\hyperref[H-2]{H2})} hold. Then, for any connected component $B$ of the support of an invariant measure, there exists $j>0$ such that $B$ is strongly ordered  with $\left \{ C^i \right \} _{i=j}^{N}$ and unordered with $\left \{ C^i \right \}_{i=0}^{j-1}$. 
        
        Moreover, if {\rm(\hyperref[H-3]{H3})} holds, $B$ is homeomorphic to a compact invariant connected set in \(\mathbb{R}^d\).
    \end{thmm}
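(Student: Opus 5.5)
The plan is to deduce Theorem~\ref{Thm_2.10} from Theorems~\ref{Thm_2.3} and~\ref{Thm_2.7}, using the inclusion $\operatorname{supp}(\mu)\subset\mathcal{B}(\Phi)$ from \cite[p.28]{M12} or \cite[Appendix A]{J20}. Fix an invariant Borel probability measure $\mu$ and a connected component $B$ of $\operatorname{supp}(\mu)$. Since $B$ is connected and contained in $\mathcal{B}(\Phi)$, it lies inside a unique connected component $\widetilde B$ of $\mathcal{B}(\Phi)$.

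\textbf{Order structure.} Theorem~\ref{Thm_2.3} gives $j>0$ such that $\widetilde B$ is strongly ordered with $C^i$ for every $i\ge j$ and unordered with $C^i$ for every $i<j$. Both properties are defined pairwise: any two distinct points of the set must satisfy $\approx_{C^i}$, or must fail $\simeq_{C^i}$. Pairwise properties pass to subsets, so $B\subset\widetilde B$ inherits both with the same $j$. This proves the first assertion.

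\textbf{Embedding under (H3).} Theorem~\ref{Thm_2.7} gives a homeomorphism $h:\widetilde B\to K\subset\mathbb R^d$ onto a compact invariant connected set $K$, where the dynamics on $K$ is the induced one $h\circ\Phi_t\circ h^{-1}$. Then $h|_B$ is a homeomorphism onto $h(B)$, and $h(B)$ is connected as a continuous image of a connected set. Two properties of $h(B)$ remain to be checked.
\begin{itemize}
\item[(i)] \emph{Compactness.} $\operatorname{supp}(\mu)$ is closed and lies in the compact global attractor $\Gamma$, since $\mathcal{B}(\Phi)\subset\Gamma$. Components of a compact set are closed, so $B$ is compact, and hence $h(B)$ is compact.
\item[(ii)] \emph{Invariance.} It suffices to show $\Phi_t(B)=B$ for all $t\ge0$; then $h(B)$ is invariant under the induced dynamics, because $h$ conjugates $\Phi_t|_{\widetilde B}$ to that dynamics.
\end{itemize}

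\textbf{Invariance of $B$.} The standard first step is $\Phi_t(\operatorname{supp}\mu)\subset\operatorname{supp}\mu$ for $t\ge0$. Indeed, let $U$ be open with $\mu(U)=0$; then $\Phi_t^{-1}(U)$ is open with $\mu(\Phi_t^{-1}(U))=\mu(U)=0$, so it misses the support. Next, $\Phi_t(\operatorname{supp}\mu)$ is compact, hence closed, and
\[
\mu\bigl(\Phi_t(\operatorname{supp}\mu)\bigr)=\mu\bigl(\Phi_t^{-1}(\Phi_t(\operatorname{supp}\mu))\bigr)\ge\mu(\operatorname{supp}\mu)=1.
\]
So $\Phi_t(\operatorname{supp}\mu)$ is a closed set of full measure, which forces $\Phi_t(\operatorname{supp}\mu)=\operatorname{supp}\mu$.

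For a component, $\Phi_t(B)$ is connected and lies in $\operatorname{supp}\mu$. The path $s\mapsto\Phi_s(B)$, $s\in[0,t]$, joins $B$ to $\Phi_t(B)$. The union $\bigcup_{s\in[0,t]}\Phi_s(B)$ is connected, since each $\Phi_s(B)$ is connected and all of them meet the connected orbit arc of a fixed point of $B$. This union lies in $\operatorname{supp}\mu$ and contains $B$, so it equals $B$, and therefore $\Phi_t(B)\subset B$.

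For the reverse inclusion, a point of $B$ has preimages in $\operatorname{supp}\mu$ under $\Phi_t$. I expect this step to be the main obstacle, because the semiflow need not be injective and a preimage could a priori lie in a different component. The argument I would try is this: if $\Phi_t(B)\subsetneq B$, some component $B'$ of the support would map into $B$. Then the connected set $\bigcup_{s\in[0,t]}\Phi_s(B')$ joins $B'$ to $\Phi_t(B')\subset B$ inside the support, so $B'=B$. Hence every preimage in $\operatorname{supp}\mu$ of a point of $B$ lies in $B$, which gives $B\subset\Phi_t(B)$ and completes the proof.
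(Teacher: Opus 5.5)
Your proposal is correct and follows the paper's route: the paper also deduces Theorem~\ref{Thm_2.10} in one line from Theorems~\ref{Thm_2.3} and~\ref{Thm_2.7} together with $\operatorname{supp}(\mu)\subset\mathcal{B}(\Phi)$. Your additional verification is sound and fills in a detail the paper leaves implicit. That verification shows that a component $B$ of $\operatorname{supp}(\mu)$ is compact and satisfies $\Phi_t(B)=B$, using forward invariance of each component and $\Phi_t(\operatorname{supp}\mu)=\operatorname{supp}\mu$.
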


     In particular, when the codimension $d$ in {\rm(\hyperref[H-3]{H3})} is either $1$ or $2$, it follows from Theorem \ref{Thm_2.10} that the topological entropy of such systems is zero. Precisely, we obtain the following theorem:
    
	\begin{thmm}\label{Cor_2.4}
    Assume that {\rm(\hyperref[H-1]{H1})}-{\rm(\hyperref[H-3]{H3})} hold and $\Phi_t$ can be extended to a flow on $\Gamma$. If $d=1,2$ in {\rm(\hyperref[H-3]{H3})}, then the topological entropy of $\Phi_t$ is $0$.
	\end{thmm}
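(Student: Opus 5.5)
Since the semiflow extends to a flow on $\Gamma$, I will reduce the entropy of $\Phi_t$ to the entropy of its time-one map $\Phi_1$ restricted to $\Gamma$. All nontrivial dynamics happens on the compact invariant set $\Gamma$, and $h_{top}(\Phi)=h_{top}(\Phi_1|_\Gamma)$. I will then invoke the variational principle on the compact metric space $\Gamma$:
\[
h_{top}(\Phi_1|_\Gamma)=\sup_{\mu} h_\mu(\Phi_1),
\]
where the supremum runs over ergodic $\Phi_1$-invariant Borel probability measures on $\Gamma$. It therefore suffices to show $h_\mu(\Phi_1)=0$ for every ergodic $\mu$.

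For a given ergodic measure of $\Phi_1$, I will first average it over $t\in[0,1]$. This gives a $\Phi_t$-invariant measure $\nu$ for the flow, whose support is contained in $\mathcal B(\Phi)$. By Abramov-type reasoning, $h_\nu(\Phi_1)\ge h_\mu(\Phi_1)$, and $h_\nu(\Phi_1)$ is controlled by the entropy of the flow restricted to $\operatorname{supp}\nu$. Alternatively, and more simply, I will use that $h_{top}(\Phi_1|_\Gamma)=h_{top}(\Phi_1|_{\mathcal B(\Phi)})$. This holds because entropy is concentrated on the nonwandering set, and the supports of all invariant measures lie in $\mathcal B(\Phi)$, so by the variational principle the restriction to $\mathcal B(\Phi)$ carries all the entropy.

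Next I decompose $\mathcal B(\Phi)$ into its connected components. Each component $B$ is invariant under the flow, since $\mathcal B(\Phi)$ is invariant and the flow is continuous with connected time. By Theorem~\ref{Thm_2.7}, $B$ is homeomorphic, via a conjugacy, to a compact invariant set $K\subset\mathbb R^d$ carrying a continuous flow $\psi_t$. For $d=1$, a flow on a compact subset of $\mathbb R$ has zero entropy: every orbit is monotone, so the recurrent points are equilibria. For $d=2$, I will use the classical result that continuous flows on compact subsets of the plane have zero topological entropy. The approach is via Poincar\'e–Bendixson: every ergodic measure of a planar flow is supported on a fixed point or a periodic orbit, hence has zero entropy (Young's theorem, or Lin–Misiurewicz type results for planar flows). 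Hence every ergodic measure on $B$ has zero entropy.

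Finally, I will handle the possibly uncountable family of components. Any ergodic measure $\mu$ for $\Phi_1$ on $\mathcal B(\Phi)$ is supported on a set whose flow-saturation is connected modulo averaging. More precisely, $\operatorname{supp}\nu$ for the flow-averaged ergodic measure is the closure of a single dense orbit, hence connected, so it lies in one component $B$. Then $h_\mu(\Phi_1)\le h_{top}(\Phi_1|_B)=0$, and the variational principle gives $h_{top}(\Phi)=0$.

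The main obstacle is this last step, together with the transfer of the planar zero-entropy result through the conjugacy. Topological entropy is a conjugacy invariant, which handles the transfer. The connectedness of supports of ergodic measures must be checked carefully, since an ergodic $\Phi_1$-measure's support need not be connected. I will avoid this by working with flow-ergodic measures and using that $h_{top}$ of the time-one map equals $\sup$ over flow-ergodic measures of $h_\nu(\Phi_1)$.
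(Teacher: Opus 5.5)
Your overall route is the paper's own. You use the variational principle, and you observe that the support of a flow-ergodic measure lies in $\mathcal{B}(\Phi)$, is connected, and so sits in a single component. That component is conjugate, by Theorem~\ref{Thm_2.7} and Theorem~\ref{Thm_2.10}, to a flow on a compact $K\subset\mathbb{R}^d$, and trivial recurrence for $d\le 2$ then kills the entropy.

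Several of your steps are more careful than the paper: the reduction to $\Phi_1|_\Gamma$, averaging a $\Phi_1$-ergodic measure into a flow-ergodic $\nu$, and proving that $\operatorname{supp}\nu$ is the closure of one forward orbit and hence connected. The paper applies Theorem~\ref{Thm_2.10} to $\operatorname{supp}\mu$ as if it were connected, without comment. One small correction: the equality $h_\nu(\Phi_1)=h_\mu(\Phi_1)$ does not come from Abramov's formula. It comes from $\Phi_t$ commuting with $\Phi_1$, so that $(\Phi_t)_*\mu$ is isomorphic to $\mu$, together with affinity of entropy. Your $d=1$ argument (no circle embeds in $\mathbb{R}$, and injective orbits are monotone) is fine as it stands.

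The step that does not go through as you justify it is $d=2$. The conjugated flow $\psi_t$ is defined only on the compact set $K\subset\mathbb{R}^2$. In general $K$ is neither a $2$-manifold nor an open planar region, and nothing guarantees that $\psi_t$ extends to a neighborhood of $K$. Young's theorem concerns flows on compact surfaces, so it does not apply here. The Poincar\'e--Bendixson argument needs transversal arcs and Jordan curves lying inside the domain of the flow, so it does not apply directly either.

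What you need is exactly the input the paper uses, the Seibert--Tulley theorem: for a continuous flow on an arbitrary subset of the plane, every recurrent point is fixed or periodic. With it, take a $\nu$-generic point that is recurrent (by Poincar\'e recurrence) and whose orbit is dense in $\operatorname{supp}\nu$. That point is fixed or periodic, so $\operatorname{supp}\nu$ is an equilibrium or a periodic orbit, and $h_\nu(\Phi_1)=0$. This also makes your intermediate claim $h_{top}(\psi_1|_K)=0$ unnecessary.
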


   \begin{rmk}
       Theorem \ref{Cor_2.4} reveals the connection between dynamics in layers of NICs and the dynamical complexity of systems. Specifically, for a system with NICs, if the codimension of the linear subspace in {\rm(\hyperref[H-3]{H3})} is low (i.e., $d=1,2$), then its topological entropy is zero and thus there is no chaotic behavior in it. In general, the topological entropy for high-dimensional systems is difficult to calculate. However, Theorem \ref{Cor_2.4} is independent of the dimension of the ambient phase space and therefore applies even to infinite-dimensional systems. From a theoretical perspective, our result provides a criterion for ruling out chaos in infinite-dimensional systems, and demonstrates that low-dimensional nested cone structures can suppress dynamical complexity in essence.
   \end{rmk}

	\noindent\section{Limit-set dichotomy of $\mathcal{R}(\Phi)$}\label{s3}
	In this section, we study the asymptotic behavior of any recurrent point and present a technical theorem, called \textit{Limit-set dichotomy of $\mathcal{R}(\Phi)$}, which turns out to be crucial to the proof of Theorem \ref{Thm_2.3}.

    Throughout this section, we assume that {\rm(\hyperref[H-1]{H1})} and {\rm(\hyperref[H-2]{H2})} hold. For simplicity, we label $T_*^i=\operatorname{max}\left \{\tau,t_*^i,T(C^i)\right\}$ for any $1\leq i< N$, where $\tau$ is eventually compact time, $t_*^i$ is given in Definition \ref{NIC} and $T(C^i)$ is UESM time w.r.t the cone $C^i$.

	\begin{thm}\label{limit_set_dichotomy}{\rm(Limit-set dichotomy of $\mathcal{R}(\Phi)$).}
	Let $x,y\in \mathcal{R}(\Phi)$, then there exists $j>0$ such that
      \begin{enumerate}[{\rm (a)}]
          \item $\omega(x)$ and $\omega(y)$ are strongly ordered with $\left \{ C^i \right \} _{i=j}^{N}$, and
          \item $\omega(x)$ and $\omega(y)$ are unordered with $\left \{ C^i \right \} _{i=0}^{j-1}$. 
      \end{enumerate}
	\end{thm}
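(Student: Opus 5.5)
We set up an integer-valued ``layer index'' on pairs and show it is constant on $(\omega(x)\cup\omega(y))^2$ minus the diagonal. For distinct $u,v\in X$ we let
\[
\iota(u,v)=\min\{i\ge 1:\ u-v\in C^i\}.
\]
This is well defined because $C^N=X$, and it satisfies $u-v\in C^{\iota}\setminus C^{\iota-1}$. The target is to find $j$ with $\iota(u,v)=j$ and $u-v\in\operatorname{Int}C^i$ for all $i\ge j$, for every pair of distinct points $u,v\in\omega(x)\cup\omega(y)$. Then (a) and (b) follow, since $u-v\notin C^{j-1}\supset C^{i}$ for $i<j$. We read ``$\omega(x)$ and $\omega(y)$ are strongly ordered'' as a statement about the union; the case $x=y$ is included.

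\textbf{Step 1: monotonicity of the index along orbits.} By Definition~\ref{NIC}(iii) and (\hyperref[H-2]{H2}), if $u-v\in C^{i}$ then $\Phi_t(u)-\Phi_t(v)\in C^{i}$ for $t\ge T_*^i$. Hence $t\mapsto\iota(\Phi_t u,\Phi_t v)$ is nonincreasing up to the delay $T_*^i$. Moreover, if $u\neq v$ with $u-v\in C^i$ and $\Phi_t u\ne\Phi_t v$, then $\Phi_t u-\Phi_t v\in\operatorname{Int}C^{i'}$ for all $i'\ge i$ and $t\ge T_*^{i'}$. Since $\omega(x),\omega(y)$ are compact invariant subsets of $\Gamma$, every point has a backward orbit inside them. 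Applying the forward monotonicity to backward preimages $\tilde u,\tilde v$ shows the following: if $\iota(u,v)=j$, then every backward pair satisfies $\iota(\tilde u,\tilde v)\ge j$, and in fact $\tilde u-\tilde v\in \operatorname{Int}C^{i}$ for $i\ge\iota(\tilde u,\tilde v)$.

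\textbf{Step 2: pairs on a single orbit.} Let $a_r(t)=\iota(\Phi_t x,\Phi_{t+r}x)$, using $x\in\omega(x)$. By Step 1, $a_r$ is eventually nonincreasing, so it stabilizes at some value $j_r$. Recurrence of $x$ gives $t_n\to\infty$ with $\Phi_{t_n}x\to x$, hence $(\Phi_{t_n}x,\Phi_{t_n+r}x)\to(x,\Phi_r x)$. Two facts now combine. First, $\operatorname{Int}C^i$ is open, so strong ordering at time $0$ persists nearby. Second, $C^{i-1}$ is closed, so membership in $C^{j_r}$ passes to the limit. Together these should force $a_r(0)=j_r$, i.e.\ the index is already constant along the pair $(\Phi_t x,\Phi_{t+r}x)$ and the difference lies in $\operatorname{Int}C^{i}$ for $i\ge j_r$. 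The same argument applies to $y$, and to cross pairs $(\Phi_t x,\Phi_{t+r}y)$ after passing to a common subsequence of approximate return times (using compactness of $\omega(x)\times\omega(y)$).

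\textbf{Step 3: passing to the closures and gluing indices.} Points of $\omega(x)$ are limits of orbit points of $x$. For limits $u\neq v$ of pairs with constant index $j$, closedness of $C^j$ gives $u-v\in C^j$. Then UESM, applied after a short backward shift inside the invariant set, upgrades this to $\operatorname{Int}C^i$ for $i\ge j$.

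\emph{The main obstacle is the lower bound $u-v\notin C^{j-1}$ in the limit.} The complement of $C^{j-1}$ is open, so strict layer membership is not closed under limits. I would try the contrapositive. If $u-v\in C^{j-1}$, choose backward preimages $\tilde u,\tilde v$ at time $s\ge T_*^{j-1}$; these are approximated by orbit pairs of index $j$, while $u-v$ itself lies in $C^{j-1}$. Since $u\neq v$, strong monotonicity puts the forward images $\Phi_{s'}u-\Phi_{s'}v$ in $\operatorname{Int}C^{j-1}$, an open set. Hence nearby orbit pairs, shifted forward by $s'$, also have index $\le j-1$, contradicting the constancy from Step 2.

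The last ingredient is that the index $j$ does not depend on the pair. Again by openness of $\operatorname{Int}C^i$ and closedness of $C^i$, the function $\iota$ is locally constant on pairs of distinct points, wherever the strong ordering from Step 1 holds. So $j$ is constant on connected families of pairs. For pairs $(u,v)$ and $(u,w)$ that are not connected in this sense, I would compare them through a third pair built from the orbit of $x$, using the recurrence returns from Step 2. I expect this comparison, together with ruling out collapse of distinct points into a lower layer in the limit, to be where most of the technical work (likely the Hausdorff-distance and separation-index tools of Appendix~\ref{app:hausdorff}) is needed.
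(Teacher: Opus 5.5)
You set up the target too strongly, and the real gap is in the cross pairs.

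\textbf{The target.} The theorem concerns cross pairs only: $u\in\omega(x)$, $v\in\omega(y)$. Pairs inside a single $\omega(x)$ are handled by Proposition~\ref{OR}. Your union version is false in general. For $u_t=u_{xx}+\lambda u(1-u^2)+cu_x$ on $S^1$, take a rotating wave $\phi(x+ct)$ with $|\phi|<1$ and the constant equilibrium $1$. The cross differences have no zeros, while two points of the wave differ by a function with sign changes. So they sit in different layers, and your planned ``gluing'' of the index over all pairs cannot succeed.

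\textbf{The gap in Step 2.} You transfer the single-orbit argument to $(\Phi_t x,\Phi_{t+r}y)$ ``after passing to a common subsequence of approximate return times''. Compactness of $\omega(x)\times\omega(y)$ only gives $(\Phi_{t_n}x,\Phi_{t_n}y)\to(x,y')$ for some $y'\in\omega(y)$, not $y'=y$. Simultaneous return of $x$ and $y$ is recurrence of $(x,y)$ under the product semiflow, and that does not follow from recurrence of each point separately.

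Without it, Step 1 yields only one inequality, $j_r\le\iota(x,\Phi_r y)$. Nothing prevents the index from dropping at some finite time, with $\Phi_tx-\Phi_{t+r}y$ crossing some $\partial C^k$. What your argument does prove is that off-diagonal pairs of the product $\omega$-limit set $\omega(x,\Phi_ry)$ lie in one open layer $(\operatorname{Int}C^{j_r})\setminus C^{j_r-1}$. These sets need not exhaust $\omega(x)\times\omega(y)$, because a general pair is approximated by $(\Phi_{s_n}x,\Phi_{s_n+r_n}y)$ with unbounded $r_n$. Your lower-bound argument in Step 3 compares with exactly such pairs. It therefore presupposes that the index is independent of $r$, which is the point you defer to an unspecified ``third pair''.

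\textbf{The missing idea.} Ruling out such a crossing amounts to showing that a recurrent point never lies on the translated cone boundary of another. That is the paper's Lemma~\ref{RC}: for $x\in\mathcal R(\Phi)$, $\mathcal R(\Phi)\cap\bigcup_i\partial C^i_x=\{x\}$. It is the technically hard step and does not follow from monotonicity plus closedness and openness. Its proof uses three ingredients:
\begin{itemize}
\item the separation estimate of Proposition~\ref{R-dist};
\item the forward invariance of $\omega(z)\cap C^j_x$ under $\Phi_{t_0}$ (Corollary~\ref{R-dist1});
\item the return-time property of Proposition~\ref{IP}.
\end{itemize}

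\textbf{How the paper then proceeds.} It fixes $x$ and moves $y$ along its connected orbit. Each $\Phi_ry$ is recurrent, so $\Phi_ry-x$ can never meet a cone boundary, and $O^+(y)$ stays in one open layer relative to $x$. Lemma~\ref{cor} and Proposition~\ref{coromega} push this to $\omega(y)$. A second connectedness argument along $O^+(x)$, again via Lemma~\ref{RC}, covers all of $\omega(x)\times\omega(y)$. Your local-constancy remark is exactly what is needed along $r\mapsto(x,\Phi_ry)$, but only once Lemma~\ref{RC} is available.

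\textbf{What is salvageable.} For $x=y$ the pair really does return to $(x,\Phi_rx)$, so $x-\Phi_rx$ lies in $(\operatorname{Int}C^{j_r})\setminus C^{j_r-1}$. Hence $r\mapsto j_r$ is locally constant on $\{r>0:\Phi_rx\ne x\}$, and constant after using periodicity when $x$ is periodic. This gives a valid alternative proof of Proposition~\ref{OR}, but it does not reach cross pairs without Lemma~\ref{RC} or an equivalent boundary-avoidance statement.
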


    Here, two sets $U$, $V$ are said to be \textit{strongly ordered with $\{C_{i}\}_{i\in\mathcal{I}}$}, if for any $u\in U$ and $v\in V$ it has $u-v\in \text{Int}C^i$ for any $i\in \mathcal{I}$; \textit{unordered with $\{C_{i}\}_{i\in \mathcal{I}}$}, if there are no points $u\in U$ and $v\in V$ related by $``\simeq_{C^i}"$ for any $i\in \mathcal{I}$, where $\mathcal{I}\subset \mathbb{N} \cup \{+\infty\}$ is an index set of the NICs. 

	\begin{rmk}
    In classical monotone systems, the invariant cone is convex which generates a partial order ``$<$". Hirsch \cite[Theorem 0.8]{H88-S} presented the Limit-set dichotomy for any two ordered points, that is, for any two points $x,y$ with $x < y$, it has either $\omega(x) < \omega(y)$ or $\omega(x) = \omega(y)$. However, for systems with NICs, each cone generally is not convex. This theorem establishes the Limit-set dichotomy of $\mathcal{R}(\Phi)$; while it does not characterize the asymptotic behavior of every point, it is sufficient to capture the global recurrent behaviors of the system.
	\end{rmk}

    The proof of Theorem \ref{limit_set_dichotomy} is presented in Subsection \ref{ptl}. For this, we first study the order structure of limit sets of any recurrent point (Proposition \ref{OR}) in Subsection \ref{OS}. Then, we present intersection of recurrent set and boundaries of NICs (Lemma \ref{RC}) in Subsection \ref{IRSaNB}, and intersection of limit sets and boundaries of NICs (Lemma \ref{cor}) in Subsection \ref{iln}.

	\subsection{Order structure of $\omega$-limit sets}\label{OS}

    In this subsection, we consider the order structure of limit sets for any recurrent point, which plays an important role in the proof of Theorem \ref{limit_set_dichotomy}.
	
	\begin{prop}\label{OR}
	   For any point $x\in \mathcal{R}\left(\Phi\right)$, there exists $j>0$ such that 
       \begin{enumerate}[{\rm (a)}]
          \item $\omega\left(x\right)$ is strongly ordered with $\left \{ C^i \right \} _{i=j}^{N}$, and
          \item $\omega\left(x\right)$ is unordered with $\left \{ C^i \right \} _{i=0}^{j-1}$. 
      \end{enumerate}
	\end{prop}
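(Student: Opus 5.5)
Throughout, let $x\in\mathcal R(\Phi)$. Then $\omega(x)$ is a nonempty compact invariant set with $x\in\omega(x)$ and $\omega(x)=\overline{O^+(x)}$. If $\omega(x)$ is a single point, the statement holds trivially with any $j$; we take $j=1$, so that (a) is vacuous and (b) concerns only $C^0=\{0\}$. So assume $\omega(x)$ has at least two points.

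\textbf{Defining the layer index of a pair.} For distinct $u,v\in\omega(x)$, let
\[
j(u,v)=\min\{i\ge 1:\ u-v\in C^i\}.
\]
This is finite because $C^N=X$, and $u-v\in C^{j}\setminus C^{j-1}$ with $j=j(u,v)$. By invariance (Definition~\ref{NIC}(iii)) and UESM, for $t\ge T_*^i$ with $i\ge j$ we get $\Phi_t(u)-\Phi_t(v)\in\operatorname{Int}C^i$. By cone invariance, $j(\Phi_t u,\Phi_t v)\le j(u,v)$ for large $t$.

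\textbf{Step 1: a recurrence argument forces the index to be preserved and strong.} Fix distinct $u,v\in\omega(x)$. I would try to show $u-v\in\operatorname{Int}C^{i}$ for all $i\ge j(u,v)$. Since $\omega(x)$ is the orbit closure of a recurrent point, we can write $u=\lim\Phi_{s_n}(x)$ and $v=\lim\Phi_{r_n}(x)$.

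The key device is to run the pair backward. Since $\omega(x)$ is invariant, $u$ and $v$ admit backward orbits in $\omega(x)$. Pick $\tilde u,\tilde v\in\omega(x)$ with $\Phi_T\tilde u=u$ and $\Phi_T\tilde v=v$, where $T\ge T_*^i$. Then:
\begin{itemize}
\item if $\tilde u-\tilde v\in C^i$, UESM gives $u-v\in\operatorname{Int}C^i$;
\item otherwise $\tilde u-\tilde v\notin C^i$.
\end{itemize}
Set $m=\min_{\text{pairs}} j$, taken over all pairs of distinct points of $\omega(x)$ together with their backward iterates. Using compactness of $\omega(x)$ and closedness of the cones, I would show that pairs realizing this minimal level stay at that level all along their backward histories. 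A cleaner route is to use the fact that $\Phi_T|_{\omega(x)}$ is surjective together with a minimality/recurrence argument. For large $t$, $\Phi_t$ maps any pair at level $\le i$ to $\operatorname{Int}C^i$. Recurrence (the pair $(\Phi_{s}x,\Phi_{r}x)$ returning near $(u,v)$) then shows that the level cannot strictly decrease along forward orbits without contradicting closedness. Here one uses that $\operatorname{Int}C^i$ is open and that the complement of $C^{i-1}$ is open.

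\textbf{Step 2: the index is constant on $\omega(x)$.} Let $j=\min_{u\ne v}j(u,v)$. To show that every pair has index exactly $j$ and that every difference lies in $\operatorname{Int}C^i$ for $i\ge j$, I would use a connectedness-type argument along the single orbit. Differences of the form $\Phi_{t+h}(x)-\Phi_t(x)$ depend continuously on $h$. Combine this with recurrence: there exist times $t_n\to\infty$ with $\Phi_{t_n}x\to x$. Suppose some pair $a,b$ has $a-b\in C^j\setminus\operatorname{Int}C^j$, or lies outside $C^j$ while another pair lies inside. Approximate both pairs by orbit pairs $(\Phi_{p}x,\Phi_{q}x)$ and push them forward by the common time $T_*^j$. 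The pair inside $C^j$ is then mapped into the open set $\operatorname{Int}C^j$. Using recurrence, the time shift $t_n$ carries the pair $(\Phi_p x,\Phi_q x)$ arbitrarily close to $(\Phi_{p-t_n}\cdots)$; more precisely, one chooses $p,q$ so that shifting one pair by a recurrence time approximates the other. Openness of $\operatorname{Int}C^j$ and of $X\setminus C^{j}$ then contradicts the assumption. This yields (a).

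Part (b) follows from the minimality of $j$. No pair lies in $C^{j-1}$, and since the cones are nested, no pair lies in $C^i$ for any $i<j$.

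\textbf{Main obstacle.} The hard part is Step 2: transferring the order relation from one pair to every other pair of $\omega(x)$ without convexity of the cones. I expect to need a careful diagonal choice of recurrence times, so that one pair of orbit points is carried, by a single time shift, near an arbitrary target pair. I also expect to need Definition~\ref{NIC}(iii) to guarantee that order is never lost after time $t_*^i$.
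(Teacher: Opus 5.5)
Your reduction is fine as far as it goes. The single-point case and the layer index $j(u,v)$ are correct. So is the observation that (b) follows from minimality once you know that every pair of distinct points of $\omega(x)$ lies in $\operatorname{Int}C^{j}$ for $j=\min_{u\ne v}j(u,v)$. But the proposal never proves that fact, and it is the entire content of the proposition: if one pair of distinct points of $\omega(x)$ is $C^i$-related, then all pairs are.

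In Step 1, the case $\tilde u-\tilde v\notin C^i$ is exactly the one that must be excluded, and nothing you write excludes it. Cone invariance and UESM only give $j(\Phi_t u,\Phi_t v)\le j(u,v)$, and strict drops do occur in general (zero numbers drop). So a genuine use of recurrence is indispensable. Note also that $\omega(x)$ need not be minimal, since a recurrent point need not be uniformly recurrent.

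The mechanism in Step 2 fails as stated. A common time shift sends $(\Phi_p x,\Phi_q x)$ to $(\Phi_{p+t}x,\Phi_{q+t}x)$, so it preserves the lag $q-p$. Recurrence of $x$ makes pairs of the form $(y,\Phi_h y)$ recurrent under the diagonal flow. An arbitrary pair $(a,b)\in\omega(x)\times\omega(x)$, however, is generally not in the diagonal orbit closure of any fixed orbit pair. You therefore cannot carry one pair near an arbitrary target pair by a single shift.

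The missing idea, which is how the paper argues, is to route everything through the base point $x$ and a lag $h$. For each $i$ one proves that $\omega(x)$ is either unordered or strongly ordered with $C^i$, according to whether $x\sim_{C^i}\Phi_\tau(x)$ for some $\tau>0$. This takes three steps.

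First, suppose $a\sim_{C^i}b$ for some $a,b\in\omega(x)$. UESM puts $\Phi_{T^i_*}(a)-\Phi_{T^i_*}(b)$ in the open set $\operatorname{Int}C^i$, so there are neighborhoods $U\approx_{C^i}V$ and orbit points $\Phi_{s_1}(x)\in U$, $\Phi_{s_2}(x)\in V$ with $s_2>s_1$. Push forward by recurrence times with $\Phi_{t_n+s_1}(x)\to x$ and use closedness of $C^i$ to get $x\simeq_{C^i}\Phi_{s_2-s_1}(x)$. For periodic $x$, choose $s_2-s_1$ off the period lattice so that this relation is $\sim_{C^i}$.

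Second, suppose $x\sim_{C^i}\Phi_\tau(x)$ for one $\tau$. Then $x\simeq_{C^i}\Phi_t(x)$ for all $t\ge 0$. The set of such $t$ is closed, and the same UESM-plus-recurrence trick shows it contains a neighborhood of every $t$ with $x\sim_{C^i}\Phi_t(x)$. A connectedness argument (the paper's $\inf$/$\sup$ argument with $T_1,T_2$, working modulo the period if $x$ is periodic) then fills out $(0,\infty)$.

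Third, cone invariance and recurrence give $\Phi_{\tau_1}(x)\simeq_{C^i}\Phi_{\tau_2}(x)$ for all $\tau_1,\tau_2\ge 0$. Closedness then makes $\omega(x)=\overline{O^+(x)}$ ordered, and invariance plus UESM upgrade this to strongly ordered.

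Your remark that $\Phi_{t+h}(x)-\Phi_t(x)$ is continuous in $h$ points toward the second step. As written, though, the proposal is a plan whose key step is missing.
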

	\begin{proof}
    If $x$ is an equilibrium, then the proposition is obtained by $j=N$. In the remainder, we assume that $x$ is not an equilibrium. 
        
        We claim that \textit{$\omega(x)$ is either unordered or strongly ordered with  $C^i$ for any $0<i<N$.} Before we prove the claim, we show how to obtain the proposition by the claim. If $\omega(x)$ is unordered with $C^i$ for all $0<i<N$, the proposition follows by taking $j=N$. Otherwise, there exists $0<j<N$ such that $\omega(x)$ is strongly ordered with $C_j$ and unordered with $C_{j-1}$. Given the nest-cone property $C^j\subset C^{i}$ for any $i>j$, $\omega(x)$ is strongly ordered with $\left \{ C^i \right \} _{i=j}^{N}$. Likewise, since $C^i\subset C^{j}$ for any $i<j$, $\omega(x)$ is unordered with $\left \{ C^i \right \} _{i=0}^{j-1}$. This concludes the proof.

        Now, it remains to prove the claim.

        As a first step, we show if there is $\tau>0$ such that $x \sim_{C^i} \Phi_{\tau}(x)$, then $\omega(x)$ strongly ordered with  $C^i$. Let $T_1 = \inf \left\{ t > 0 : x \simeq_{C^i} \Phi_t(x) \right\}$. Clearly, $x \simeq_{C^i} \Phi_{T_1}(x)$ and $\tau\geq T_1$. We prove $T_1=0$ by contradiction. Suppose $T_1 > 0$. Then $\tau=nT_1 + s$, where \(n \in \mathbb{N}\), \(s \in (0, T_1)\). If $x = \Phi_{T_1}(x)$, one has $\Phi_{\tau}(x) =\Phi_{nT_1 + s}(x) =\Phi_{s}(x)$, and hence $x \sim_{C^i}\Phi_{s}(x)$ as \(x \sim_{C^i} \Phi_{\tau}(x)\) which implies \(s \in \{ t > 0 : x \sim_C \Phi_t(x) \}\) contradicting the definition of \(T_1\). 
        Otherwise, \(x \sim \Phi_{T_1}(x)\). By {\rm(\hyperref[H-2]{H2})}, it has \(\Phi_{T^i_*}(x) \approx \Phi_{T^i_*}(\Phi_{T_1}(x))\).
        Thus, there exists a neighborhood $U$ of $\Phi_{T_1}(x)$, such that $\Phi_t(x) \approx_{C^i} \Phi_t(U)$ for any  $t \ge T^i_*$.
		Recalling $x$ is a recurrent point, there exists a sequence $\{t_n\}_{n \in \mathbb{N}}\rightarrow\infty$ such that $\Phi_{t_n}(x)\rightarrow x$ as $n\rightarrow\infty$. Choose sufficiently small $a \in (0, T_1)$ such that $\Phi_{T_1 - a}(x)\neq x$ and $\Phi_{T_1-a}(x)\in U$. Recalling $\Phi_t(x) \approx_{C^i} \Phi_t(U)$ for all  $t \ge T^i_*$, it has $\Phi_{t_n}(x) \approx_{C^i} \Phi_{t_n}(\Phi_{T_1-a}(x))$ for any $t_n \ge T^i_*$.
		Letting \(n \to \infty\) and by the closeness of $C^i$, we obtain  
		$x \simeq_{C^i} \Phi_{T_1 - a}(x)$. Noting $\Phi_{T_1 - a}(x)\neq x$, it has $x \sim_{C^i} \Phi_{T_1 - a}(x)$ which implies \(T_1 - a \in \{ t > 0 : x \simeq_C \Phi_t(x) \}\)
		contradicting the definition of \(T_1\). Thus \(T_1 = 0\). Let $T_2= \sup \left\{ m \ge 0 : x \simeq_C \Phi_s(x) \text{ for all } s \in [0, m] \right\}$.
		By an argument similar to that for \(T_1\), we obtain \(T_2 = +\infty\).
        Thus, $x \simeq_{C^i} \Phi_{t}(x)$ for any $t\ge 0$. Again, since $x\in \mathcal{R}(\Phi)$, there exists a sequence $\{t_n\}_{n \in \mathbb{N}}$ with  $t_n\to +\infty$  as $n\to +\infty$ such that $\Phi_{t_n}(x)$ converges to $x$. Recalling that $x \simeq_{C^i} \Phi_{t}(x)$ for any $t\ge 0$, it has $\Phi_{t_n + \tau_1}(x) \simeq_{C^i} \Phi_{t_n + \tau_2}(x)$ for any $\tau_2>\tau_1>0$. 
        Thus by the closeness of $C^i$, one has $\Phi_{t_2}(x) \simeq_{C^i} \Phi_{t_1}(x)$. 
        Therefore, $O^+(x)$ is an ordered set with $C^i$, and hence, $\overline{O^+(x)}$ is an ordered set with $C^i$. Noting $x\in \mathcal{R}(\Phi)$, it follows that $\omega(x)$ is an ordered set with $C^i$. 
        Moreover, as $\omega(x)$ is invariant, by {\rm(\hyperref[H-2]{H2})}, we obtain $\omega(x)$ is strongly ordered with $C^i$.

       As a second step, we show if $x \nsim_{C^i} \Phi_{t}(x)$  for all $ t > 0$, then $\omega(x)$ unordered  with  $C^i$. Suppose not, that is, there are two points \(a, b \in \omega(x)\) with \(a \sim_{C^i} b\). By {\rm(\hyperref[H-2]{H2})}, it has \(\Phi_{T^i_*}(a) \approx_{C^i}\Phi_{T^i_*}(b)\). Thus there exist neighborhoods \(U\) of \(\Phi_{T^i_*}(a)\) and \(V\) of \(\Phi_{T^i_*}(b)\) such that $U \approx_{C^i}V$.
        Choose two points $\Phi_{s_1}(x), \Phi_{s_2}(x)$ in $O^+(x)$ with $s_2>s_1>T^i_*$ such that \(\Phi_{s_1}(x) \in U\), \(\Phi_{s_2}(x) \in V\). Clearly, it has \(\Phi_{s_1}(x) \approx_{C^i}\Phi_{s_2}(x)\) as $U \approx_{C^i}V$. Since $x$ is a recurrent point, there exists a sequence $\left\{t_n\right\}_{n\in \mathbb{N}}$ with  $t_n\to +\infty$  as $n\to +\infty$ such that $\Phi_{t_n+s_1}(x)$ converges to $x$. Moreover, it has $\Phi_{t_n+s_2}(x)$ converges to $\Phi_{s_2-s_1}(x)$. Recalling \(\Phi_{s_1}(x) \approx_{C^i}\Phi_{s_2}(x)\), one has \(\Phi_{t_n+s_1}(x) \approx_{C^i}\Phi_{t_n+s_2}(x)\), i.e., $\Phi_{t_n+s_1}(x) -\Phi_{t_n+s_2}(x)\in \operatorname{Int}C^i$, for any $n\in \mathbb{N}$. Thus, by the closeness of $C^i$, we have \(x-\Phi_{s_2-s_1}(x)\in C^i\), that is, $x\simeq\Phi_{s_2-s_1}(x)$, which contradicts \(x \nsim_{C^i} \Phi_{t}(x)\) for all \(t > 0\). Thus, $\omega\left(x\right)$ is unordered with $C^i$.

       As a third step, we show if $x$ is a periodic point with minimal positive period $P$ and \(x \nsim_{C^i} \Phi_{t}(x)\) for all \(t\in \left(0,P\right)\), then $\omega(x)$ unordered  with  $C^i$. Suppose not, that is, there are two points \(a, b \in \omega(x)\) with \(a \sim_{C^i} b\). By {\rm(\hyperref[H-2]{H2})}, it has \(\Phi_{T^i_*}(a) \approx_{C^i}\Phi_{T^i_*}(b)\), and hence, there exist neighborhoods \(U\) of \(\Phi_{T^i_*}(a)\) and \(V\) of \(\Phi_{T^i_*}(b)\) such that $U \approx_{C^i}V$.
        Choose two points $\Phi_{s_1}(x), \Phi_{s_2}(x)$ in $O^+(x)$ with $s_2>s_1>T^i_*$ and $s_2 - s_1 \notin \{ nP : n \in \mathbb{Z}^+ \}$ such that \(\Phi_{s_1}(x) \in U\), \(\Phi_{s_2}(x) \in V\). Clearly, it has \(\Phi_{s_1}(x) \sim_{C^i}\Phi_{s_2}(x)\). Since $x$ is a recurrent point, there exists a sequence $\left\{t_n\right\}_{n\in \mathbb{N}}$ with  $t_n\to +\infty$  as $n\to +\infty$ such that $\Phi_{t_n+s_1}(x)$ converges to $x$. Moreover, it has $\Phi_{t_n+s_2}(x)$ converges to $\Phi_{s_2-s_1}(x)$. Recalling \(\Phi_{s_1}(x) \sim_{C^i}\Phi_{s_2}(x)\), one has \(\Phi_{t_n+s_1}(x)\sim _{C^i}\Phi_{t_n+s_2}(x)\), i.e., $\Phi_{t_n+s_1}(x) -\Phi_{t_n+s_2}(x)\in C^i$, for any $n\in \mathbb{N}$. Thus, by the closeness of $C^i$, we have \(x-\Phi_{s_2-s_1}(x)\in C^i\), that is $x\sim_{C^i}\Phi_{s_2-s_1}(x)$. Since $s_2-s_1=nP+ s$ where \(n \in \mathbb{N}\) and \(s \in (0, P)\), it has $\Phi_{s_2-s_1}(x)=\Phi_s(x)$. Hence, $x\sim_{C^i}\Phi_{s}(x)$ for some $s\in (0,P)$, contradicting \(x \nsim_{C^i} \Phi_{t}(x)\) for all \(t \in (0,P)\). Thus, $\omega\left(x\right)$ is unordered with $C^i$.

        This establishes the claim and thus we have completed the proof. 
	\end{proof}

    \subsection{Recurrence in the boundary of NICs}\label{IRSaNB}

    In this subsection, we study the recurrence in the boundary of NICs, and show that for any recurrent point $x$, $x$ is the only recurrent point in its boundary of NICs. 
    
    To express clearly, we label some useful notation. For any $x\in X$ and cone $C$, we define a translated cone $C_x=x+C$ and the boundary of $C_x$ is labeled by $\partial C_x$. For any $x\in X$ and NICs $\{C^i\}_{i=0}^{N}$, the boundary of translated NICs $\{C^i\}_{i=0}^{N}$ is denoted by $\bigcup_{i=0}^{N} \partial C^i_x $. 
    
    Now, we present the lemma:

    \begin{lem}\label{RC}
		Let $x\in \mathcal{R}(\Phi)$, then $\mathcal{R}(\Phi)\cap (\bigcup_{i=0}^{N} \partial C^i_x)=\{x\}$.
	\end{lem}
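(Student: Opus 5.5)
Since $0\in C^i$ is a boundary point of each proper cone (the cone $C^i$ is invariant under scaling by all $l\in\mathbb{R}$, so $0$ cannot be an interior point unless $C^i=X$), we have $x\in\partial C^i_x$ for $1\le i<N$; hence $x$ belongs to the intersection. The lemma therefore reduces to ruling out any recurrent point $y\neq x$ with $y-x\in\partial C^i$ for some $1\le i<N$. The indices $i=0$ and $i=N$ are trivial: $\partial C^0_x=\{x\}$ and $\partial C^N_x=\partial X=\emptyset$. My plan is to argue by contradiction. Suppose $y\in\mathcal R(\Phi)$, $y\ne x$, and $y-x\in\partial C^i=C^i\setminus\operatorname{Int}C^i$ (recall $C^i$ is closed). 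Then $x\sim_{C^i}y$, and by (H2) we get $\Phi_t(x)\approx_{C^i}\Phi_t(y)$ for all $t\ge T^i_*$.

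The key step is to use simultaneous recurrence to transport the strict interior relation back to time $0$. I would first try to find a common sequence $t_n\to\infty$ with $\Phi_{t_n}(x)\to x$ and $\Phi_{t_n}(y)\to y$. This is not automatic for two unrelated recurrent points, and I expect it to be the main obstacle.

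To get around it, I would use the strict inequality at a fixed time together with openness. Fix $T=T^i_*$ and set $w=\Phi_T(y)-\Phi_T(x)\in\operatorname{Int}C^i$. Then there are neighborhoods $U\ni\Phi_T(x)$ and $V\ni\Phi_T(y)$ with $V-U\subset\operatorname{Int}C^i$. Pick $t_n\to\infty$ with $\Phi_{t_n}(x)\to x$, and pass to the point $\Phi_T(x)$, which is also recurrent. The difficulty is that $y$ returns along a different sequence. A cleaner approach is to argue within $\omega$-limit sets. Because $x\sim_{C^i} y$ is preserved forward in time, every pair of limit points $(a,b)=\lim(\Phi_{t_n}(x),\Phi_{t_n}(y))$ satisfies $a\simeq_{C^i}b$. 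Using Proposition \ref{OR} and the recurrence of $x$ and $y$, one can choose $t_n$ so that $\Phi_{t_n}(x)\to x$; after passing to a subsequence, $\Phi_{t_n}(y)\to b\in\omega(y)$. This yields $x\simeq_{C^i}b$, and in fact $\Phi_T(x)\approx_{C^i}\Phi_T(b)$ whenever $b\neq x$.

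Next I would exploit the uniform strictness: $\Phi_{t}(y)-\Phi_t(x)\in \operatorname{Int}C^i$ for all $t\ge T$, and pulling back by $T$ is precisely what eventual compactness and (H2) are meant to handle. Concretely, I would consider the product semiflow on $X\times X$. The point $(x,y)$ may fail to be recurrent in the product, but the product $\omega$-limit set $\omega(x,y)$ is compact, invariant, and contained in the closed set $\{(a,b):a\simeq_{C^i}b\}$. I would show that $(x,y')\in\omega(x,y)$ for some $y'$, and that the same holds with the roles of $x$ and $y$ exchanged. By invariance, $(x,y')=\Phi_T(\tilde x,\tilde y)$ with $\tilde x\simeq_{C^i}\tilde y$, so either $y'=x$ or $y'-x\in\operatorname{Int}C^i$.

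Finally, to reach the contradiction at $(x,y)$ itself, I would use recurrence of $y$ along its own times together with the closedness of $C^i$. As in the first step of Proposition \ref{OR}, I would move the neighborhood $V$ around $\Phi_T(y)$ and use $t_n$ with $\Phi_{t_n}(y)\to y$ to show that $y-x$ lies in the closure of the interior, strictly away from the boundary. The argument would produce an $\varepsilon$-ball around $y-x$ inside $C^i$, contradicting $y-x\in\partial C^i$. The hardest point is obtaining joint return of the pair $(x,y)$. If the product-$\omega$-limit argument does not close, I would fall back on a Furstenberg-type fact: every recurrent point is recurrent for the product with a suitable point of the same orbit closure.
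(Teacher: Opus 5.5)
Your preliminary reductions are fine. Indeed $x\in\partial C^i_x$ for $0<i<N$, the indices $i=0,N$ are trivial, and if some $y\in\mathcal R(\Phi)\setminus\{x\}$ had $y-x\in\partial C^i$, then (H2) gives $\Phi_t(y)-\Phi_t(x)\in\operatorname{Int}C^i$ for $t\ge T^i_*$. You also correctly locate the obstacle. However, the proposal never supplies an idea that overcomes it, so the central step is missing.

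\textbf{Why the product argument does not close.} The product-$\omega$-limit observation is correct, but it only controls pairs lying in $\omega(x,y)$. What it actually shows, under your contradiction hypothesis, is that $(x,y)\notin\omega(x,y)$, i.e.\ $x$ and $y$ are \emph{not} jointly recurrent. Two recurrent points need not be jointly recurrent in general, so this is not yet a contradiction.

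\textbf{Why your final step fails.} Along return times $t_n$ of $y$ you get $\Phi_{t_n}(x)\to a\in\omega(x)$, and by the previous remark necessarily $a\neq x$. So the limit tells you something about $y-a$, not about $y-x$. Moreover, closedness of $C^i$ only ever yields $\simeq_{C^i}$ in the limit, never an $\varepsilon$-ball inside $C^i$. For that you would need a uniform lower bound on the distance of the approximating differences from $\partial C^i$, which you do not have. The device from the first step of Proposition~\ref{OR} works there only because both points lie on one orbit, so a single return sequence serves both; it does not transfer. The ``Furstenberg-type fact'' in your fallback is not stated in a form that would produce a return of the specific pair $(x,y)$.

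\textbf{How the paper avoids joint recurrence.} Write $z$ for the offending point. The paper first uses Proposition~\ref{OR} to get $x\notin\omega(z)$. It then takes a \emph{single} return time $t_0\ge 2T^j_*$ of $x$ and shows that $\Omega=\omega(z)\cap C^j_x$ satisfies $\Phi_{t_0}\Omega\subset\Omega$ and $\underline{\mathrm{dist}}(\Phi_{t_0}\Omega,\partial C^j_x)\ge\eta>0$ (Proposition~\ref{R-dist} and Corollary~\ref{R-dist1}). The uniform $\eta$ comes from compactness of the sets $D_x(t)$, an annulus around $x$ avoiding $\omega(z)$, and Lemma~\ref{A-dist}. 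By continuity in time, $\Phi_s(z)$ stays at distance $>\eta/2$ from $z\in\partial C^j_x$ for every $s$ within $\varepsilon_0$ of a positive multiple of $t_0$. This contradicts Proposition~\ref{IP}: a recurrent point returns at times arbitrarily close to multiples of any fixed period.

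So joint recurrence of $z$ with a circle rotation is always available, while joint recurrence with $x$ is not. The $\Phi_{t_0}$-forward invariance of $\Omega$, together with the uniform margin $\eta$, is exactly what lets the former replace the latter. This is the ingredient your plan lacks.
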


    To prove this lemma, we present some important propositions to describe the dynamical behaviors on the boundary of NICs. To this end, we introduce the Hausdorff distance $d_H$ and the separation index $\underline{dist}$, with detailed definitions and properties given in the Appendix \ref{app:hausdorff}.

    \begin{prop}\label{R-dist}
		Let $x,z\in \mathcal{R}(\Phi)$ with $z\in \bigcup_{i=0}^{N} \partial C^i_x$ and $x\ne z$. Then there exist $0<j<N$, $t_0 \geq 2T^{j}_*$ and $\eta > 0$ such that $$\underline{dist}(\Phi_{t_0} (\omega(z)\cap C^j_x), \partial C_x^j) \ge  \eta.$$
	\end{prop}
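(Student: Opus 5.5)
The plan is to reduce the statement to a pointwise strict inclusion and then use compactness to obtain a uniform margin. I read $\underline{dist}(A,B)$ as the infimum of $\|a-b\|_X$ over $a\in A$, $b\in B$ (Appendix~\ref{app:hausdorff}). Points (i)–(iii) below are steps I expect to go through; point (iv) is the step I am not sure how to carry out.

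\emph{(i) Choosing $j$.} Since $z\in\bigcup_i\partial C^i_x$ and $z\neq x$, there is an index $i$ with $z-x\in\partial C^i$.
\begin{itemize}
\item $i\neq 0$, because $\partial C^0_x=\{x\}$ and $z\neq x$.
\item $i\neq N$, because $\partial C^N_x=\partial X=\emptyset$.
\end{itemize}
Among the indices with $z-x\in C^i$, I take $j$ minimal, so that $z-x\in C^j\setminus C^{j-1}$ and $0<j<N$. Since $C^j$ is closed, $x\sim_{C^j}z$. This is the layer I will work in.

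\emph{(ii) Reduction to a pointwise statement.} By (H1), $\omega(z)$ is a nonempty compact invariant set contained in $\Gamma$. Hence $K:=\omega(z)\cap C^j_x$ is compact; if $K$ is empty the statement holds trivially. For fixed $t_0$, continuity of $\Phi_{t_0}$ makes $\Phi_{t_0}(K)$ compact as well. Suppose I show
\[
\Phi_{t_0}(K)\subset \operatorname{Int}C^j_x .
\]
Then the continuous function $w\mapsto \operatorname{dist}(\Phi_{t_0}(w),\partial C^j_x)$ is positive on the compact set $K$, so it has a positive minimum $\eta$. This is exactly the desired separation bound. So everything reduces to the pointwise inclusion, for a suitable $t_0\ge 2T^j_*$.

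\emph{(iii) Ordering the forward image of $w$ against the forward orbit of $x$.} Fix $w\in K$, so $w-x\in C^j$.
\begin{itemize}
\item If $w\neq x$, then (H2) gives $\Phi_t(w)-\Phi_t(x)\in\operatorname{Int}C^j$ for all $t\ge T^j_*$.
\item If $w=x$, I would use that $x$ is recurrent but not an equilibrium. In the equilibrium case $x\in\omega(z)$ would force separate and easier handling. I would then compare $\Phi_t(x)$ with $x$ using the ordering of $\omega(x)$ given by Proposition~\ref{OR} at the level $j$.
\end{itemize}

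\emph{(iv) Transferring the comparison from $\Phi_t(x)$ back to $x$ — the main obstacle.} Since $x$ is recurrent, there are $t_n\to\infty$ with $\Phi_{t_n}(x)\to x$. Because $\omega(x)$ is invariant, I can write $x=\Phi_{T^j_*}(x')$ with $x'\in\omega(x)$. The idea is to split $t_0=2T^j_*$ (or $t_0=T^j_*+t_n$ for large $n$) into two stages:
\begin{itemize}
\item the first $T^j_*$ produces strict ordering in $\operatorname{Int}C^j$;
\item the second stage, combined with the openness of $\operatorname{Int}C^j$ and uniform continuity of $\Phi$ on the compact attractor $\Gamma$, should absorb the small error $\|\Phi_{t_n}(x)-x\|$.
\end{itemize}
To get a margin that is uniform in $w\in K$, I would argue by contradiction. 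Take $w_n\in K$ with $\Phi_{t_0}(w_n)$ approaching $\partial C^j_x$, and pass to a limit $w\in K$. Then $\Phi_{t_0}(w)-x\in\partial C^j$. Via the recurrence identity of step (iii), I would transport this back to a relation $x\simeq_{C^j}\Phi_s(x)$, or to an ordering between a point of $\omega(z)$ and a point of $\omega(x)$. This would contradict either the strict ordering supplied by (H2) or the dichotomy in Proposition~\ref{OR}.

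This last step is where I expect difficulty. It requires the case split of Proposition~\ref{OR}, according to whether $\omega(x)$ is strongly ordered or unordered with $C^j$, together with the minimal choice of $j$, to exclude a point of $\omega(z)$ lying exactly on the boundary of the cone translated by $x$ after time $t_0$.
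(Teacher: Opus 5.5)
Steps (i)--(iii) are sound as far as they go. The minimal $j$ automatically has $z-x\in\partial C^j$. The reduction to $\Phi_{t_0}(K)\subset\operatorname{Int}C^j_x$ is legitimate, and the target is true: for the paper's $t_0$, Corollary~\ref{R-dist1} gives $\Phi_{t_0}(K)\subset K$. But step (iv) is the entire content of the proposition, and the route you sketch does not close.

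For a fixed $t_0$, (H2) only gives $\Phi_{t_0}(w)\in\operatorname{Int}C^j_{\Phi_{t_0}(x)}$, a cone with vertex $\Phi_{t_0}(x)$. Nothing lets you move the vertex back to $x$. Your contradiction argument (fix $t_0$, take $w_n\in K$ with $\Phi_{t_0}(w_n)$ approaching $\partial C^j_x$, pass to a limit) only repeats the compactness of step (ii). The contradictions you hope to reach are not available at this stage:
\begin{itemize}
\item Proposition~\ref{OR} controls order only inside a single $\omega$-limit set, and a relation $x\simeq_{C^j}\Phi_s(x)$ is perfectly compatible with it.
\item Excluding points of $\omega(z)$ from $\partial C^j_x\setminus\{x\}$, or ordering $\omega(z)$ against $\omega(x)$, is the content of Lemma~\ref{cor} and Theorem~\ref{limit_set_dichotomy}. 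Both are derived from this very proposition through Lemma~\ref{RC}, so using them is circular.
\end{itemize}
Nor can ``openness of $\operatorname{Int}C^j$'' absorb the return error $\|\Phi_{t_n}(x)-x\|_X$ by itself. You would need a margin from $\partial C^j$ that is uniform in $w\in K$ and in $n$. Since $C^j$ is invariant under scalar multiplication, that margin collapses whenever $\Phi_t(w)-\Phi_t(x)$ is small.

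The missing idea is a margin that is uniform in time and is fixed \emph{before} $t_0$ is chosen.

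First, $x\notin\omega(z)$. Otherwise $x,z\in\omega(z)$ with $z-x\in\partial C^j$, contradicting Proposition~\ref{OR} for $\omega(z)$; this also makes your case $w=x$ vacuous. Hence $\omega(z)\subset\operatorname{Int}R$ for an annulus $R=\{v:\eta_1\le\|v-x\|_X\le\eta_2\}$, which keeps everything away from the vertex.

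Next, look at the re-centred sets $x-\Phi_t(x)+\Phi_t(K)$ for $t\ge 2T^j_*$. Put $y=\Phi_{t-T^j_*}(x)\in\omega(x)=\overline{O^+(x)}$ and let $B=C^j\cap(\omega(z)-\omega(x))$, which is compact. Then (H2) gives $\Phi_t(K)\subset\Phi_{T^j_*}(y+B)$ and $\Phi_{T^j_*}(y+B)-\Phi_{T^j_*}(y)\subset\operatorname{Int}C^j\cup\{0\}$. So the compact set $x-\Phi_{T^j_*}(y)+\Phi_{T^j_*}(y+B)$ lies in $\operatorname{Int}C^j_x\cup\{x\}$ and misses $\partial C^j_x\cap R$. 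Its distance to $\partial C^j_x\cap R$ depends continuously on $y$ (Lemma~\ref{A-dist}), and $y$ ranges over the compact set $\omega(x)$. This yields $\delta>0$ independent of $t$.

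Only now use recurrence of $x$. Set $\eta=\min\{\underline{\mathrm{dist}}(\omega(z),\partial C^j_x\setminus\operatorname{Int}R),\delta/2\}$ and pick $t_0\ge 2T^j_*$ with $\|\Phi_{t_0}(x)-x\|_X\le\eta$. Then $\Phi_{t_0}(K)$ is within Hausdorff distance $\eta$ of the re-centred set, hence at least $\eta$ from $\partial C^j_x\cap R$. It is also at least $\eta$ from $\partial C^j_x\setminus\operatorname{Int}R$, because $\Phi_{t_0}(K)\subset\omega(z)$.

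Your instinct to take $t_0=T^j_*+t_n$ along return times points the right way. Without the annulus and the compactness over $\omega(x)$, though, there is no fixed margin against which the return error can be made small.
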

	
	\begin{proof}
    Clearly, there is $0<j<N$ such that $z\in \partial C^j_x$, and hence, $\omega(z)\cap C^j_x\neq\emptyset$ (since $z\in \omega(z)$). For any $y\in X$ and $t \geq 2T^{j}_*$, we label $$\Omega_y^j=\omega(z)\cap C^j_y,\ \ \Omega^j_x(t) = x - \Phi_t(x) + \Phi_t \Omega^j_x.$$ Noting that $x\notin \omega(z)$ by Proposition  \ref{OR}, one can find $\eta_2>\eta_1>0$ such that $\omega(z)\subset\operatorname{Int}R$, where $R = \{ v \in X : \eta_1 \leq \|v - x\|_X \leq \eta_2 \}$.

     We claim there is $\delta>0$ such that, for any $t \geq 2T^{j}_*$, it has 
        \begin{equation}\label{1}
            \underline{\mathrm{dist}}(\Omega^j_x(t), \partial C^j_x \cap R) > \delta.
        \end{equation}
    Before proving the claim, we show how it implies the proposition. Take 
		$$\eta = \min\left\{  \underline{\mathrm{dist}}\left(\omega(z), \partial C^j_x \setminus\operatorname{Int} R\right) ,\frac{\delta}{2}\right\}.$$ 
        Clearly, $\eta>0$ as $\omega(z)\in \operatorname{Int} R$. Since $x\in \mathcal{R}(\Phi )$, there is $t_0\ge  2T^{j}_*$ such that $\|x - \Phi_{t_0}(x)\|_X\leq \eta$, and hence, $d_H(\Phi_{t_0} \Omega^j_x, \Omega^j_x(t_0)) \leq \eta$. 
		By (\ref{dis_2}), we have  
		  \begin{equation}\label{eta-R}
		      \underline{\mathrm{dist}}(\Phi_{t_0} \Omega^j_x, \partial C^j_x \cap R) \geq \underline{\mathrm{dist}}(\Omega^j_x(t_0), \partial C^j_x \cap R) - d_H(\Phi_{t_0} \Omega^j_x, \Omega^j_x(t_0)) \geq \delta - \eta \geq \eta.
		  \end{equation} 
        On the other hand, by the definition of $\eta$ and the fact that $\Omega^j_x\subset \omega(z)$, it has 
		\begin{equation}\label{disteta}
			\underline{\mathrm{dist}}(\Phi_{t_0} \Omega^j_x, \partial C^j_x \setminus\operatorname{Int} R) \geq  \eta.
		\end{equation}
        Thus, together with (\ref{eta-R}) and (\ref{disteta}), it has $\underline{\mathrm{dist}}(\Phi_{t_0} \Omega^j_x, \partial C^j_x ) \geq  \eta$. This completes the proof of Proposition \ref{R-dist}.

       Now, we proceed to prove the claim. First, we assert for any $t\geq 2T^{j}_*$, there exists a compact set $D_x(t)$ such that $\Omega^j_x(t)\subseteq D_x(t)$ and $D_x(t)\cap (\partial C^j_x \cap R)=\emptyset$. Let $B= C^j\cap (\omega(z)-\omega(x))$ and $B_y=y+B$ for any $y \in \overline{O^+(x)}$. Clearly, $B$ is nonempty since $z-x\in B$. Then by the compactness of $\omega(z)$ and $\omega(x)$, $B$ is compact. Let $$
		D_x(t) = x - \Phi_t(x) + \Phi_{T^{j}_*} B_{\Phi_{t-T^{j}_*}(x)} \text{ for any }t\geq T^{j}_*.$$
       Clearly, $D_x(t)$ is compact for any $t\geq 2T^{j}_*$. 
       Noting a fact that $\Omega_y^j\subset B_y$ for any $y \in \overline{O^+(x)}$, it has for any $t\ge 2T^{j}_*$, $$\Omega^j_x(t) \subset x - \Phi_t(x) + \Phi_{T^{j}_*} (\Omega^j_{\Phi_{t-T^{j}_*}(x)})\subset x - \Phi_t(x) + \Phi_{T^{j}_*}( B_{\Phi_{t-T^{j}_*}(x)})=D_x(t).$$ Thus, it has $\Omega^j_x(t) \subset D_x(t)$ for any $t\ge 2T^{j}_*$, which is the first part of assertion. Noting $ B_{\Phi_{t-T^{j}_*}(x)}\subset C_{\Phi_{t-T^{j}_*}(x)}^j$, it has $\Phi_{T^{j}_*}(B_{\Phi_{t-T^{j}_*}(x)})\subset \operatorname{Int}C_{\Phi_{t}(x)}^j\cup\{\Phi_{t}(x)\}$, and hence $D_x(t)\subset \operatorname{Int}C_x^j\cup \{x\}$ for any $t\ge 2T^{j}_*$. Therefore, it has $D_x(t)\cap (\partial C^j_x \cap R)=\emptyset$ for any $t\ge 2T^{j}_*$. This completes the proof of the assertion.
        
        Then, by the compactness of $D_x(t)$ and the closeness of $\partial C^j_x \cap R$, one has $\underline{\mathrm{dist}}(D_x(t), \partial C^j_x \cap R) > 0$ for any $t\ge 2T^{j}_*$. By virtue of Lemma \ref{A-dist}, $\underline{\mathrm{dist}}(D_x(t), \partial C^j_x \cap R)$ is continuous with $t$. Thus, by the compactness of $\overline{O^+(x)}$, there exists $\delta > 0$ such that  $\underline{\mathrm{dist}}(D_x(t), \partial C^j_x \cap R) > \delta$ for any $t\ge 2T^{j}_*$.
        Recalling $\Omega^j_x(t)\subset D_x(t)$ for any $t\ge 2T^{j}_*$, we obtain \eqref{1} and complete the proof of claim.  
    \end{proof}

        \begin{cor}\label{R-dist1}
        Let $x,z\in \mathcal{R}(\Phi)$ with $z\in \bigcup_{i=0}^{N} \partial C^i_x$ and $x\ne z$. Then $$\Phi_{t_0} (\omega(z)\cap C^j_x) \subseteq \omega(z)\cap C^j_x,$$ where $j, t_0$ are given in Proposition \ref{R-dist}.
        \end{cor}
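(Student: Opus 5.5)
Two inclusions are needed for a point $w\in\omega(z)\cap C^j_x$: first $\Phi_{t_0}(w)\in\omega(z)$, then $\Phi_{t_0}(w)\in C^j_x$. The first is immediate, because $\omega(z)$ is invariant under $\Phi_t$ for $t\ge0$. The content is therefore the second inclusion, and the plan is to obtain it from the uniform separation in Proposition~\ref{R-dist} together with a connectedness argument that uses recurrence of $x$.

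Fix $j$, $t_0\ge 2T^j_*$ and $\eta>0$ as in Proposition~\ref{R-dist}, so that every point of $\Phi_{t_0}(\omega(z)\cap C^j_x)$ lies at distance at least $\eta$ from $\partial C^j_x$. For a given $w\in\omega(z)\cap C^j_x$ this leaves two possibilities: $\Phi_{t_0}(w)\in\operatorname{Int}C^j_x$, or $\Phi_{t_0}(w)\in X\setminus C^j_x$ at distance at least $\eta$ from $C^j_x$. We must exclude the second.

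\begin{itemize}
\item[1.] Since $w-x\in C^j$, invariance of the cone (Definition~\ref{NIC}(iii), with $t_0\ge t^j_*$) gives $\Phi_{t_0}(w)-\Phi_{t_0}(x)\in C^j$. If moreover $w\neq x$ (automatic, since $x\notin\omega(z)$ was used in the proof of Proposition~\ref{R-dist}), then (H2) gives $\Phi_{t_0}(w)\in \operatorname{Int}C^j_{\Phi_{t_0}(x)}$.
\item[2.] Compare $C^j_{\Phi_{t_0}(x)}$ with $C^j_x$. Since $C^j$ is invariant under scaling by any real $l$, the translates satisfy $C^j_{\Phi_{t_0}(x)}=C^j_x+(\Phi_{t_0}(x)-x)$, and $\Phi_{t_0}(w)$ is moved by exactly $x-\Phi_{t_0}(x)$ when passing between them. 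This is the same quantity that defines $\Omega^j_x(t_0)=x-\Phi_{t_0}(x)+\Phi_{t_0}\Omega^j_x$ in the proof of Proposition~\ref{R-dist}. There $t_0$ was chosen using recurrence of $x$ so that $\|x-\Phi_{t_0}(x)\|_X\le\eta$, and it was shown that $\Omega^j_x(t_0)\subset D_x(t_0)\subset \operatorname{Int}C^j_x\cup\{x\}$.
\item[3.] Hence $\Phi_{t_0}(w)$ lies within distance $\eta$ of a point of $\operatorname{Int}C^j_x\cup\{x\}$. The segment joining these two points has length at most $\|x-\Phi_{t_0}(x)\|_X\le\eta$.
\item[4.] If $\Phi_{t_0}(w)\notin C^j_x$, that segment would have to cross $\partial C^j_x$, forcing $\operatorname{dist}(\Phi_{t_0}(w),\partial C^j_x)\le \|x-\Phi_{t_0}(x)\|_X\le \eta$.
\end{itemize}

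The main obstacle is the equality case in step 4: a distance exactly $\eta$ is not ruled out by the bound as stated. To close it, I would use that the proof of Proposition~\ref{R-dist} actually gives the strict bound $\delta-\eta>\eta$ when $\eta<\delta/2$. Alternatively, I would shrink $\eta$ slightly (still positive) and rechoose $t_0$ with $\|x-\Phi_{t_0}(x)\|_X<\eta$, which the recurrence of $x$ allows. Either way, step 4 then contradicts the separation from Proposition~\ref{R-dist}, so $\Phi_{t_0}(w)\in C^j_x$, and combined with the invariance of $\omega(z)$ this gives the stated inclusion.

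The exceptional point $x$ itself does not arise, because $\Phi_{t_0}(w)\in\omega(z)$ and $x\notin\omega(z)$.
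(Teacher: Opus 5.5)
Your argument is correct and is essentially the paper's proof written pointwise. The paper combines $\Omega^j_x(t_0)\subseteq C^j_x$ with $d_H(\Phi_{t_0}\Omega^j_x,\Omega^j_x(t_0))\le\eta$ to put any $y\in\Phi_{t_0}(\omega(z)\cap C^j_x)\setminus C^j_x$ within $\eta$ of $C^j_x$, against the separation of Proposition~\ref{R-dist}; your translate $\Phi_{t_0}(w)+x-\Phi_{t_0}(x)$ and segment argument do exactly this.

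You are right that Proposition~\ref{R-dist} only gives $\ge\eta$ (the paper tacitly writes $>\eta$), but your own step 1 already rules out equality. That translate lies in $\operatorname{Int}C^j_x$, so the first point where the segment meets $\partial C^j_x$ is not the translate itself, and its distance from $\Phi_{t_0}(w)$ is strictly less than $\|x-\Phi_{t_0}(x)\|_X\le\eta$. Choosing $t_0$ with $\|x-\Phi_{t_0}(x)\|_X<\eta$ also works. The $\delta-\eta>\eta$ remark alone would not suffice, because the bound against $\partial C^j_x\setminus\operatorname{Int}R$ in that proof is non-strict.
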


        \begin{proof}
            Suppose on the contrary that there is $y\in \Phi_{t_0} (\omega(z)\cap C^j_x))\setminus(\omega(z)\cap C^j_x)$. Then it follows from proposition \ref{R-dist} that $\underline{\mathrm{dist}}(y, \partial C^j_x) > \eta$, and hence, $\underline{\mathrm{dist}}(y, C^j_x)> \eta$ (since $y \notin C^j_x$). We define $\Omega^j_x(t)$ exactly as in Proposition \ref{R-dist}, i.e., $\Omega^j_x(t) = x - \Phi_t(x) + \Phi_t (\omega(z)\cap C^j_x)$. Recalling $\Omega^j_x(t_0) \subseteq C^j_x$, one has 
            \begin{equation}\label{>}
                \underline{\mathrm{dist}}(y, \Omega^j_x(t_0)) > \eta.
            \end{equation}
		On the other hand, noting $y \in \Phi_{t_0}(\omega(z)\cap C^j_x)$ and $d_H(\Phi_{t_0} (\omega(z)\cap C^j_x), \Omega^j_x(t_0)) \leq \eta$, it has 
        \begin{equation*}
            \underline{\mathrm{dist}}(y, \Omega^j_x(t_0)) \leq \eta,
        \end{equation*}
		which contradicts inequality (\ref{>}). Thus, $\Phi_{t_0} (\omega(z)\cap C^j_x) \subseteq \omega(z)\cap C^j_x$.
        \end{proof}

        \begin{prop}\label{IP}
		Let $z\in \mathcal{R}(\Phi)$. Then for any $\theta > 0$, $\varepsilon > 0$ and $\tau>0$, it has
		$$N(z, \theta) \cap \mathcal{T}(\tau, \varepsilon) \neq \emptyset,$$
		where $N(z, \theta) = \{ t > 0 : |\Phi_t(z) - z| < \theta \}$ and $\mathcal{T}(\tau, \varepsilon) = \{ n\tau + t : n \in \mathbb{Z}^+, |t| < \varepsilon \}$.
	\end{prop}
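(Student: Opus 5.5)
We have to show that a recurrent point $z$ returns close to itself at some time lying near a multiple of a fixed $\tau$. This is a standard fact about recurrent points, proved by passing to the time-$\tau$ map together with a circle rotation.

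The proof runs through the skew product $F:\omega(z)\times S^1\to\omega(z)\times S^1$, where $S^1=\mathbb{R}/\tau\mathbb{Z}$ and
\[
F(w,s)=(\Phi_1(w),\, s+1 \bmod \tau).
\]
More simply, one can work with the continuous semiflow $\Psi_t(w,s)=(\Phi_t(w),s+t)$ on the compact space $K=\omega(z)\times S^1$. The set $\omega(z)$ is compact and invariant by (\hyperref[H-1]{H1}), and it contains $z$ because $z$ is recurrent. The aim is to show that $(z,0)$ is recurrent for $\Psi$. Granting this, there are $t_n\to\infty$ with $\Phi_{t_n}(z)\to z$ and $t_n \bmod \tau\to 0$ in $S^1$. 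Then $t_n=m_n\tau+r_n$ with $|r_n|<\varepsilon$ and $m_n\in\mathbb{Z}^+$ for large $n$, which gives $t_n\in N(z,\theta)\cap\mathcal{T}(\tau,\varepsilon)$.

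\textbf{Step 1: a recurrent lift.} Recurrence of $(z,0)$ is the crux, and the key step is to show that the product with a rotation preserves recurrence for \emph{some} lift. Take $t_n\to\infty$ with $\Phi_{t_n}(z)\to z$. Pass to a subsequence so that $t_n \bmod \tau\to\alpha\in S^1$. Then $(z,\alpha)\in\omega_\Psi(z,0)$, and by rotation equivariance $(z,s+\alpha)\in\omega_\Psi(z,s)$ for every $s$.

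\textbf{Step 2: reduce to a closed subgroup.} Let $G=\{\beta\in S^1:(z,\beta)\in\omega_\Psi(z,0)\}$. Then $\alpha\in G$, and $G$ is closed. Using transitivity of $\omega$-limits together with rotation equivariance, $\beta,\gamma\in G$ implies $\beta+\gamma\in G$: indeed $(z,\beta+\gamma)\in\omega_\Psi(z,\beta)\subset\omega_\Psi(z,0)$. A closed subsemigroup of the compact group $S^1$ is a subgroup, so $0\in G$. This means $(z,0)\in\omega_\Psi(z,0)$, which is exactly the recurrence we need.

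\textbf{Main obstacle.} The delicate step is the transitivity claim $\omega_\Psi(z,\beta)\subset\omega_\Psi(z,0)$ when $(z,\beta)\in\omega_\Psi(z,0)$. This holds because $\omega$-limit sets are closed and invariant for a continuous semiflow on a compact space: if $q\in\omega(p)$, then $\omega(q)\subset\omega(p)$, by a diagonal argument using continuity of $\Psi_t$ on finite time intervals.

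Alternatively, one can avoid the skew product by a pigeonhole argument. Take return times $t_{n_1}<t_{n_2}<\dots$ with residues modulo $\tau$ within $\varepsilon$ of each other, and use that $\Phi_{t_{n_k}-t_{n_l}}$ applied near $\Phi_{t_{n_l}}(z)\approx z$ returns near $z$. That route, however, requires uniform continuity control along unbounded time differences, which is why I would use the group argument above.
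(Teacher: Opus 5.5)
Your argument is correct. The paper gives no proof of its own here, only a pointer to the appendix of \cite{SWZ}, so there is no in-text argument to compare against. Your skew-product proof works as a valid self-contained replacement.

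The chain of steps is sound:
\begin{itemize}
\item $\omega_\Psi(z,\beta)=R_\beta\,\omega_\Psi(z,0)$, because the rotation $R_\beta(w,s)=(w,s+\beta)$ is an isometry commuting with every $\Psi_t$.
\item $G$ is nonempty, closed, and closed under addition.
\item $0\in G$ follows elementarily. We have $n\alpha\in G$ for all $n\ge1$. Choose $n_k\uparrow\infty$ with $n_k\alpha$ convergent in $\mathbb{R}/\tau\mathbb{Z}$. Then the elements $(n_{k+1}-n_k)\alpha\in G$ tend to $0$, and $G$ is closed.
\item The final conversion of $t_n \bmod \tau\to 0$ into membership in $N(z,\theta)\cap\mathcal{T}(\tau,\varepsilon)$ is also correct, since $m_n\to\infty$.
\end{itemize}

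Three small points would tighten the write-up.
\begin{itemize}
\item The step you call the ``main obstacle'' is not delicate. If $q\in\omega_\Psi(p)$, then $\Psi_t(q)\in\omega_\Psi(p)$ for each fixed $t\ge0$ by continuity of the single map $\Psi_t$. Since $\omega_\Psi(p)$ is closed, $\omega_\Psi(q)\subset\overline{O^+(q)}\subset\omega_\Psi(p)$. No diagonal argument is needed.
\item Your proof never uses compactness of $\omega(z)$ or (\hyperref[H-1]{H1}). Only continuity of each $\Phi_t$ and compactness of the circle $\mathbb{R}/\tau\mathbb{Z}$ enter. So the statement actually holds for recurrent points of any continuous semiflow on a metric space, which is slightly more than the paper's setting requires.
\item Delete the abandoned map $F(w,s)=(\Phi_1(w),s+1 \bmod \tau)$; it plays no role and only confuses the reader.
\end{itemize}
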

	\begin{proof}
		See \cite[Appendix]{SWZ}.
	\end{proof}

    Now, we are ready to prove the Lemma \ref{RC}.

        \begin{proof}[Proof of Lemma \ref{RC}]
		We prove it by contradiction. Suppose there is $z\in \mathcal{R}(\Phi)$ with $z\neq x$ such that $z\in\bigcup_{i=0}^{N} \partial C^i_x$. By Proposition \ref{R-dist}, there exist $0<j<N$, $\eta>0$ and $t_0\ge 
        2T^{j}_*$ such that \begin{equation}\label{q1}
		    \underline{\mathrm{dist}}(\Phi_{t_0}(\Omega_x^j),  \partial C^j_x)\ge \eta.
		\end{equation}
        By Lemma \ref{AA-dist}, there exists $\varepsilon_0 > 0$ such that for any $|t| < \varepsilon_0$,
		\begin{equation}\label{q2}
		    d_H(\Phi_{t_0 + t}(\Omega_x^j), \Phi_{t_0}(\Omega_x^j)) < \frac{\eta}{2}.
		\end{equation}
        By (\ref{dis_2}), one has for any $t \in \mathbb{R}$,
        \begin{equation}\label{q3}
            \underline{\mathrm{dist}}(\Phi_{t_0 + t}(\Omega_x^j),  \partial C^j_x) \geq \underline{\mathrm{dist}}(\Phi_{t_0}(\Omega_x^j),  \partial C^j_x) - d_H(\Phi_{t_0 + t}(\Omega_x^j), \Phi_{t_0}(\Omega_x^j)) .
        \end{equation}
        Together \eqref{q1},\eqref{q2} with \eqref{q3}, one has for any $|t| < \varepsilon_0$,
        \begin{equation}\label{q4}
            \underline{\mathrm{dist}}(\Phi_{t_0 + t}(\Omega_x^j),  \partial C^j_x)>\frac{\eta}{2}.
        \end{equation}
        Denote the time-set by $\mathcal{T}(t_0, \varepsilon_0) \triangleq \{ nt_0 + t : n \in \mathbb{Z}^+, |t| < \varepsilon_0 \}$. Furthermore, by Corollary \ref{R-dist1}, \eqref{q4} can be enhanced as 
        \begin{equation}\label{q5}
            \underline{\mathrm{dist}}(\Phi_{s}(\Omega_x^j),  \partial C^j_x)>\frac{\eta}{2}, \text{ for any } s\in \mathcal{T}(t_0, \varepsilon_0).
        \end{equation}
        Recalling $z\in \Omega_x^j$ and $z\in \partial C^j_x$, by \eqref{q5}, one has
        \begin{equation*}\label{q6}
            \underline{\mathrm{dist}}(\Phi_{s}(z),  z)>\frac{\eta}{2}, \text{ for any } s\in \mathcal{T}(t_0, \varepsilon_0).
        \end{equation*}
        Thus, 
		\[
		\left\{t > 0 : \|\Phi_t(z) - z\|_X < \frac{\eta}{2}\right\}\cap \mathcal{T}(t_0, \varepsilon_0) = \emptyset,
		\]
		which contradicts Proposition \ref{IP}.
	\end{proof}

    \subsection{Intersection of $\omega$-limit Sets and boundary of NICs}\label{iln}

    In this section, we characterize the intersection of $\omega$-limit sets of recurrent points and the boundary of NICs.

    \begin{lem}\label{cor}
		 Let $x,y\in \mathcal{R}(\Phi)$, then
		$\omega(y)\cap((\bigcup_{i=0}^{N} \partial C^i_x)\setminus\{x\})=\emptyset$.
	\end{lem}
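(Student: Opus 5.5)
The plan is to reduce Lemma \ref{cor} to Lemma \ref{RC} by using recurrence of $y$ together with the structure of $\omega(y)$. Suppose, to the contrary, that there is $w\in\omega(y)$ with $w\neq x$ and $w\in\partial C^j_x$ for some $0<j<N$; note $j=0$ and $j=N$ are trivial, since $\partial C^0_x=\{x\}$ and $\partial C^N_x=\emptyset$. If $w\in\mathcal{R}(\Phi)$ we are done by Lemma \ref{RC}, so the issue is that a point of $\omega(y)$ need not be recurrent. Since $y\in\mathcal{R}(\Phi)$ we have $y\in\omega(y)$, hence $\omega(y)=\overline{O^+(y)}$. Moreover, $\omega(y)$ is compact and invariant, and it is the orbit closure of a recurrent point, so it is a minimal-type set in which the points $\Phi_t(y)$ are dense and themselves recurrent. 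I would therefore approximate $w$ by recurrent points $\Phi_{t_n}(y)\to w$, each of which lies in $\mathcal{R}(\Phi)$ because $\Phi_t(y)\in\omega(\Phi_t(y))=\omega(y)$.

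The key step is to show that some recurrent point $\Phi_t(y)$ actually lies on $\partial C^j_x\setminus\{x\}$, or else to derive a contradiction directly. I would split into cases according to the position of $w-x$.

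\emph{Case 1: there are points of $O^+(y)$ both in $\operatorname{Int}C^j_x$ and outside $C^j_x$ near $w$.} By continuity of $t\mapsto\Phi_t(y)$, the orbit crosses $\partial C^j_x$ at some time $s$. If $\Phi_s(y)\neq x$, Lemma \ref{RC} applied to the recurrent pair $x,\Phi_s(y)$ gives a contradiction. If $\Phi_s(y)=x$, then $x\in\omega(y)$, and by Proposition \ref{OR} the set $\omega(y)$ is unordered or strongly ordered with $C^j$. The point $w\in\omega(y)$ satisfies $w-x\in C^j\setminus\operatorname{Int}C^j$ with $w\neq x$, so $\omega(y)$ is neither; this is a contradiction.

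\emph{Case 2: the orbit near $w$ stays on one side.} I would use UESM instead. If $\Phi_{t_n}(y)\in C^j_x$ for large $n$, then $w\in C^j_x$ and $w\neq x$. By (H2), $\Phi_{T^j_*}(w)-\Phi_{T^j_*}(x)\in\operatorname{Int}C^j$. I would then take a backward point $w'\in\omega(y)$ with $\Phi_{T^j_*}(w')=w$ and compare it with the recurrent return $\Phi_{t_k}(x)\to x$. The aim is to transport the strict interior relation back to $w$ via the recurrence times of $x$, in the spirit of Proposition \ref{R-dist} and Corollary \ref{R-dist1}, applied with $\omega(y)$ in place of $\omega(z)$. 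Concretely, I would rerun the proof of Proposition \ref{R-dist} with $\Omega^j_x=\omega(y)\cap C^j_x$: it contains $w$, which lies on $\partial C^j_x$, and $x\notin\omega(y)$ (by the Case 1 argument). That argument yields $t_0$ and $\eta$ with $\Phi_{t_0}(\Omega^j_x)$ at distance at least $\eta$ from $\partial C^j_x$ and invariant inside $\Omega^j_x$. Then, exactly as in the proof of Lemma \ref{RC}, I would use Proposition \ref{IP} on the recurrent point $y$ (not $w$). Returns of $y$ near itself along times in $\mathcal{T}(t_0,\varepsilon_0)$, combined with the density of $O^+(y)$ in $\omega(y)$, should force points of $\Omega^j_x$ arbitrarily close to $w\in\partial C^j_x$, contradicting the $\eta/2$ separation. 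The other side, with $\Phi_{t_n}(y)\notin C^j_x$, is symmetric after exchanging the roles of $x$ and $y$ via $x-\Phi_{t}(y)$ in $-C^j=C^j$.

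I expect the main obstacle to be this last transfer. Proposition \ref{IP} gives returns of a recurrent point to itself, whereas we need returns near the non-recurrent boundary point $w$. I would handle this by choosing $t_n$ with $\Phi_{t_n}(y)\to w$, which are recurrent points, and applying the separation estimate \eqref{q5} uniformly to $\Phi_{t_n}(y)\in\Omega^j_x$.
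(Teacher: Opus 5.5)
Your reduction to $x\notin\omega(y)$ is correct. Your treatment of the subcase where the approximating points $\Phi_{t_n}(y)$ lie in $C^j_x$ also works, and it takes a different route from the paper, whose case (iib) instead pulls $x,y,w$ back to preimages and finds a forbidden crossing. With $\Omega^j_x=\omega(y)\cap C^j_x$, the proofs of Proposition~\ref{R-dist} and Corollary~\ref{R-dist1} go through, and \eqref{q5} holds uniformly on $\Omega^j_x$. Applying Proposition~\ref{IP} to each recurrent $\Phi_{t_n}(y)\in\Omega^j_x$ then gives $\underline{\mathrm{dist}}(\Phi_{t_n}(y),\partial C^j_x)\ge\eta/2$, which contradicts $\Phi_{t_n}(y)\to w$.

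The gap is the other subcase, $\Phi_{t_n}(y)\notin C^j_x$. By Lemma~\ref{RC} and connectedness, all of $O^+(y)$ then lies outside $C^j_x$. This case is not symmetric to the first. Since $C^j=-C^j$, the relation $\simeq_{C^j}$ is already symmetric, so swapping $x$ and $y$ turns ``outside'' into ``outside'', never into ``inside''. Nor can Proposition~\ref{R-dist} be rerun on the complement. That argument rests on (H2) pushing $C^j$-related pairs into $\operatorname{Int}C^j$, which drives $\Phi_t(\omega(y)\cap C^j_x)$ away from $\partial C^j_{\Phi_t(x)}$. Forward images of pairs outside $C^j$, by contrast, may enter the cone, so you get neither forward invariance nor a uniform separation. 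This subcase is exactly the paper's case (iia), and your proposal leaves it unproved.

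You can close it with the (H2) observation you already wrote down, used in this subcase instead:
\begin{enumerate}[(1)]
\item Choose a return time $\tau\ge T^j_*$ of $x$ with $\Phi_\tau(x)$ close enough to $x$ that $y\notin C^j_{\Phi_\tau(x)}$. This is possible because $y\notin C^j_x$ and $C^j$ is closed.
\item Since $w-x\in C^j\setminus\{0\}$, (H2) gives $\Phi_\tau(w)\in\operatorname{Int}C^j_{\Phi_\tau(x)}$. Because $\Phi_\tau(w)\in\omega(y)=\overline{O^+(y)}$, some point of $O^+(y)$ lies in $\operatorname{Int}C^j_{\Phi_\tau(x)}$.
\item Hence the orbit, which starts at $y\notin C^j_{\Phi_\tau(x)}$, meets $\partial C^j_{\Phi_\tau(x)}$. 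Since $\Phi_\tau(x)$ is recurrent, Lemma~\ref{RC} forces the meeting point to be $\Phi_\tau(x)$, so $\Phi_\tau(x)\in\omega(y)$.
\item Proposition~\ref{OR} is then violated: $\omega(y)$ contains a point of $\operatorname{Int}C^j_{\Phi_\tau(x)}$ and also the point $y\notin C^j_{\Phi_\tau(x)}$.
\end{enumerate}
The paper handles your first subcase with the same crossing device after moving to preimages, so the whole lemma can be done without rerunning Proposition~\ref{R-dist}.
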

\begin{proof}
    We consider it in the following two cases: (i) $x\in \omega(y)$; (ii) $x\notin \omega(y)$. 
    
    (i) $x\in \omega(y)$. For any $0\leq i\leq N$, Proposition \ref{OR} implies that $\omega(y)$ is either strongly ordered or unordered with $C^{i}$. Consequently, $\omega(y)\setminus\{x\}$ and $\{x\}$ are strongly ordered or unordered with $C^{i}$. Thus, $\omega(y)\setminus\{x\}\cap(\bigcup_{i=0}^{N} \partial C^i_x)=\emptyset$, that is, $\omega(y)\cap((\bigcup_{i=0}^{N} \partial C^i_x)\setminus\{x\})=\emptyset$.

    (ii) $x\notin \omega(y)$. First, we claim that there is $j>0$ such that
    \begin{equation}\label{cjj}
        O^+(y)\subset( \operatorname{Int}C^j_x)\setminus C^{j-1}_x.
    \end{equation}
    In fact, by the virtue of Lemma \ref{RC}, together with $y\in \mathcal{R}(\Phi)$, it has $O^+(y)\cap ((\bigcup_{i=0}^{N} \partial C^i_x)\setminus\{x\})=\emptyset$. Thus, $O^+(y)\cap\bigcup_{i=1}^{N} (\left (\operatorname{Int} C^i_x\right )\setminus C^{i-1}_x))\neq\emptyset$, and hence, there is $j>0$ such that $O^+(y)\cap( ( \operatorname{Int}C^j_x)\setminus C^{j-1}_x)\neq\emptyset$. By the virtue of Lemma \ref{RC} and the connectedness of $O^+(y)$, it has $O^+(y)\subset( \operatorname{Int}C^j_x)\setminus C^{j-1}_x$, which completes the proof of claim.

    Then, we prove the lemma by contradiction. Suppose $z\in \omega(y) \cap ((\bigcup_{i=0}^{N} \partial C^i_x)\setminus\{x\})$. 
    By the claim, it has $\omega(y)\subset C^j_x\setminus \text{Int}C^{j-1}_x$, and hence, $z\in \partial C^{j-1}_x\cup\partial C^j_x$. We consider the following two cases: (iia) $z\in \partial C^{j-1}_x$; (iib) $z\in \partial C^{j}_x$.

    (iia) $z\in \partial C^{j-1}_x$. Since $y\in ( \operatorname{Int}C^j_x) \setminus C^{j-1}_x$, there are neighborhoods $U_x$ of $x$ and $U_y$ of $y$ such that $y'-x'\in ( \operatorname{Int}C^j) \setminus C^{j-1}$ for any $x'\in U_x$ and $y'\in U_y$. Recalling $x$, $y\in R\left(\Phi\right)$, there are $\tau_x,\tau_y>T_*^{j-1}$ such that $\Phi_{\tau_x}(x)\in U_x$ and $\Phi_{\tau_y}(y)\in U_y$, and hence, 
    \begin{equation}\label{in1}
        \Phi_{\tau_y}(y) - \Phi_{\tau_x}(x) \in ( \operatorname{Int}C^j)\setminus C^{j-1}.
    \end{equation}
	Moreover, since $z-x\in \partial C^{j-1}$, it has $\Phi_{\tau_x}(z) - \Phi_{\tau_x}(x) \in \operatorname{Int}C^{j-1}$ by {\rm(\hyperref[H-2]{H2})}. Recalling $z\in \omega(y)$, there is $y_1\in O^+(y)$ close to $\Phi_{\tau_x}(z)$ such that 
    \begin{equation}\label{in2}
        y_1 - \Phi_{\tau_x}(x) \in \operatorname{Int}C^{j-1}.
    \end{equation}
    By the connectedness of $O^+(y)$, together with \eqref{in1} and \eqref{in2}, there is $y_2 \in O^+(y)$ such that $y_2-\Phi_{\tau_x}(x)\in \partial C^{j-1}$. On the other hand, together with \eqref{in1} and \eqref{in2}, it follows from Proposition \ref{OR} that $\Phi_{\tau_x}(x)\notin O^+(y)$. Thus, $y_2\in \partial C^{j-1}_{\Phi_{\tau_x}(x)}\setminus\{\Phi_{\tau_x}(x)\}$, which contradicts Lemma \ref{RC}.

    (iib) $z\in \partial C^{j}_x$. 
    Fix $t_0>T^j_*$. 
By the invariance of $\omega(x)$ and $\omega(y)$, choose 
$x_0\in\omega(x)$ and $y_0,z_0\in\omega(y)$ such that $\Phi_{t_0}(x_0)=x$,
 $\Phi_{t_0}(y_0)=y$ and $
\Phi_{t_0}(z_0)=z$. Clearly, $x_0,y_0,z_0\in \mathcal{R}(\Phi)$. Moreover, it has $z_0-x_0\notin C^j$ by the fact $z-x\in \partial C^j$ and {\rm(\hyperref[H-2]{H2})}.

Noting that $y_0-x_0\notin\partial C^j$ by Lemma \ref{RC} as $x_0,y_0\in \mathcal{R}(\Phi)$, we consider the following two cases: (1) $y_0-x_0\in\operatorname{Int} C^j$; (2) $y_0-x_0\notin C^j$.

(1) $y_0-x_0\in\operatorname{Int} C^j$. Then there is $y_1\in O^+(y)$ close 
to $y_0$ such that 
\begin{equation}\label{y1}
    y_1-x_0\in\operatorname{Int} C^j.
\end{equation}
Since $z_0-x_0\notin C^j$, there is $z_1\in O^+(y)$ such that 
\begin{equation}\label{z1}
    z_1-x_0\notin C^j.
\end{equation}
Thus, together with \eqref{y1} and \eqref{z1}, it has $x_0\notin O^+(y)$ by Proposition \ref{OR}. Hence, by the connectedness of $O^+(y)$, together with \eqref{y1} and \eqref{z1}, there exists $y_2\in O^+(y)\cap
\bigl(\partial C_{x_0}^j\setminus\{x_0\}\bigr),$
which contradicts Lemma \ref{RC}.

(2) $y_0-x_0\notin C^j$. Then there exist neighborhoods $U_{x_0}$ 
of $x_0$ and $U_{y_0}$ of $y_0$ such that $y'-x'\notin C^j$
for any $x'\in U_{x_0}$ and $y'\in U_{y_0}$. Since 
$x_0\in\omega(x)$ and $y_0\in\omega(y)$, there exist 
$\tau_x,\tau_y>T^j_*$ such that $\Phi_{\tau_x}(x)\in U_{x_0}$ and $
\Phi_{\tau_y}(y)\in U_{y_0},$
and hence
\begin{equation}\label{ry}
    \Phi_{\tau_y}(y)-\Phi_{\tau_x}(x)\notin C^j.
\end{equation}
On the other hand, since $z-x\in\partial C^j$, {\rm(\hyperref[H-2]{H2})} yields $\Phi_{\tau_x}(z)-\Phi_{\tau_x}(x)
\in\operatorname{Int} C^j.$
Since $z\in\omega(y)$, there exists $y_3\in O^+(y)$ close 
to $\Phi_{\tau_x}(z)$, such that
\begin{equation}\label{y4}
    y_3-\Phi_{\tau_x}(x)\in\operatorname{Int} C^j.
\end{equation}
By the connectedness of $O^+(y)$, together with (\ref{ry}) and (\ref{y4}), 
there exists $y_4\in O^+(y)$ such that
$y_4-\Phi_{\tau_x}(x)\in\partial C^j.$
Moreover, (\ref{ry}), (\ref{y4}), and Proposition \ref{OR} imply that $\Phi_{\tau_x}(x)\notin O^+(y).$
Thus, $y_4\in
\partial C^j_{\Phi_{\tau_x}(x)}
\setminus\{\Phi_{\tau_x}(x)\},$
contradicting Lemma \ref{RC}.

Now, we have proved the case (iib) and completed the proof.
\end{proof}

    \begin{prop}\label{coromega}
        Let $x\in \mathcal{R}(\Phi)$. Then for any $y\in \mathcal{R}(\Phi)$, there is $j>0$ such that
		\[
		\omega(y) \setminus \{x\} \subset ( \operatorname{Int}C^j_x) \setminus C^{j-1}_x.
		\]
    \end{prop}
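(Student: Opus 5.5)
Fix $x,y\in\mathcal R(\Phi)$. The idea is to partition $X\setminus\{x\}$ into the layers $L_i:=(\operatorname{Int}C^i_x)\setminus C^{i-1}_x$ for $1\le i\le N$ together with the boundary set $\bigcup_{i}\partial C^i_x\setminus\{x\}$. By Lemma \ref{cor}, $\omega(y)\setminus\{x\}$ misses the boundary set, so it lies in $\bigcup_i L_i$. It then suffices to show that $\omega(y)\setminus\{x\}$ meets only one layer.

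First, I check that the $L_i$ cover $X\setminus\big(\{x\}\cup\bigcup_i\partial C^i_x\big)$. Take $w\ne x$ off every boundary, and let $i$ be the least index with $w-x\in C^i$; such an $i$ exists since $C^N=X$, and $i\ge1$ because $w\ne x$. Since $w\notin\partial C^i_x$, we get $w-x\in\operatorname{Int}C^i$. Since $w-x\notin C^{i-1}$, we get $w\in L_i$. Each $L_i$ is open, being the intersection of an open set with the complement of a closed set. The $L_i$ are pairwise disjoint by nestedness: if $i<k$ then $L_k\cap C^{i}_x=\emptyset$, while $L_i\subset C^i_x$.

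Next, I would obtain a single layer through the connectedness argument already used in case (ii) of Lemma \ref{cor}, rather than through connectedness of $\omega(y)$ itself. Removing the point $x$ from $\omega(y)$ may destroy connectedness, and $\omega(y)$ need not be connected for a semiflow that is only recurrent.

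\emph{Case $x\notin\omega(y)$.} Claim \eqref{cjj} in the proof of Lemma \ref{cor} gives $j>0$ with $O^+(y)\subset L_j$. Taking closures, $\omega(y)\subset\overline{O^+(y)}\subset \overline{L_j}\subset C^j_x\setminus\operatorname{Int}C^{j-1}_x$. Any point of $\omega(y)$ outside $L_j$ would then lie in $\partial C^j_x\cup\partial C^{j-1}_x$, which Lemma \ref{cor} forbids. Hence $\omega(y)\subset L_j$.

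\emph{Case $x\in\omega(y)$.} Since $y$ is recurrent, $\omega(y)=\overline{O^+(y)}$ and $y\in\omega(y)$. If $x\in O^+(y)$, then $\omega(y)=\omega(x)$, and Proposition \ref{OR} applies directly. Every $w\in\omega(x)\setminus\{x\}$ satisfies: $w-x$ lies in $\operatorname{Int}C^i$ for $i\ge j$ and is outside $C^i$ for $i<j$. So $\omega(x)\setminus\{x\}\subset L_j$.

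If $x\notin O^+(y)$, then $O^+(y)$ is a connected set avoiding $x$ and, by Lemma \ref{RC}, avoiding all boundaries $\partial C^i_x$. Hence $O^+(y)\subset L_j$ for a single $j$, and as in the first case $\omega(y)\setminus\{x\}\subset\overline{L_j}\setminus(\text{boundaries}\cup\{x\})=L_j$. Alternatively, the case $x\in\omega(y)$ can be handled uniformly via Proposition \ref{OR}: $\omega(y)$ is strongly ordered with $C^i$ for $i\ge j$ and unordered with $C^i$ for $i<j$, which forces every $w\in\omega(y)\setminus\{x\}$ into $L_j$. This second route is cleaner, and it is the one I would use.

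The main subtlety is the step $\overline{L_j}\subset C^j_x\setminus\operatorname{Int}C^{j-1}_x$ and the exclusion of limit points falling on $\partial C^{j-1}_x$ or $\partial C^j_x$. This exclusion is exactly the content of Lemma \ref{cor}, so once that lemma is invoked no further estimate should be needed.
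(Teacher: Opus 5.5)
Your proposal is correct and, with the Proposition \ref{OR} route you settle on for $x\in\omega(y)$, it is essentially the paper's proof: the case $x\notin\omega(y)$ uses claim \eqref{cjj} plus Lemma \ref{cor}, and the case $x\in\omega(y)$ follows directly from Proposition \ref{OR}. One side remark is wrong but harmless, since you never use it: because $y$ is recurrent, $\omega(y)=\overline{O^+(y)}$ is the closure of a connected set and is therefore connected.
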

    \begin{proof}
        We consider it in the following two cases: (i) $x\notin \omega(y)$; (ii) $x\in \omega(y)$. 

        (i) $x\notin \omega(y)$. By the claim in Lemma \ref{cor}, there is $j>0$ such that $\omega(y)\subset C^{j}_x\setminus \text{Int}C^{{j}-1}_x$. Together with Lemma \ref{cor}, it has $\omega(y)\subset ( \operatorname{Int}C^j_x)\setminus C^{j-1}_x$. 
        
    (ii) $x\in \omega(y)$. By Proposition \ref{OR}, there is $j>0$ such that $\omega\left(y\right)$ is strongly ordered with $\left \{ C^i \right \} _{i=j}^{N}$, and unordered with $\left \{ C^i \right \} _{i=0}^{j-1}$. Therefore, $\omega(y)\setminus \{x\}\subset ( \operatorname{Int}C^j_x)\setminus C^{j-1}_x$. 
    \end{proof}

	\subsection{Proof of Theorem \ref{limit_set_dichotomy}}\label{ptl}
	\begin{proof}[Proof of Theorem \ref{limit_set_dichotomy}]
	    Let $x,y\in \mathcal{R}(\Phi)$ with $x\neq y$. By Proposition \ref{coromega}, there is $j>0$ such that 
        \begin{equation}\label{cj}
            \omega(y) \setminus \{x\} \subset ( \operatorname{Int}C^j_x)\setminus C^{j-1}_x.
        \end{equation}
        
        We claim that for any $y'\in \omega(y)$ and $x'\in \omega(x)$ with $y'\ne x'$, it has 
        \begin{equation}\label{cj1}
            y'-x'\in C^{j} \setminus \operatorname{Int}  C^{j-1}.
        \end{equation}
        Before proving the claim, we show how it leads to the theorem. By {\rm(\hyperref[H-2]{H2})} and the invariance of $\omega(x)$ and $\omega(y)$, \eqref{cj1} implies $y'-x'\in \operatorname{Int}C^{j}$ and $y'-x'\notin C^{j-1}$ for any $y'\in \omega(y)$ and $x'\in \omega(x)$ with $y'\ne x'$. On the one hand, it has $\omega(x)$ and $\omega(y)$ are strongly ordered with $C^{j}$. Noting $C^{j}\subset C^{i}$ for any $i>j$, it has $\omega(x)$ and $\omega(y)$ are strongly ordered with $\{ C^i \} _{i=j}^{N}$. On the other hand, it has $\omega(x)$ and $\omega(y)$ are unordered with $C^{j-1}$. Noting $C^{i}\subset C^{j-1}$ for any $i<j-1$, it has $\omega(x)$ and $\omega(y)$ are unordered with $\{ C^i \} _{i=0}^{j-1}$.

        It remains to prove the claim. 
        
        First, we show $y'-x'\in C^{j}$ for any $y'\in \omega(y)$ and $x'\in \omega(x)$ with $y'\ne x'$ by contradiction. Suppose there are $x_1 \in \omega(x)$, $y_1 \in \omega(y)$ such that $y_1 - x_1 \notin C^j$. Then, there are $y_2 \in O^+(y)$ with $y_2\neq x$ and $x_2 \in O^+(x)$ such that $y_2 - x_2 \notin C^j$. On the other hand, noting $y_2\in \omega(y)\setminus \{x\}$, it has $y_2-x\in \operatorname{Int} C^{j}$ by \eqref{cj}. Thus by the connectedness of $O^+(x)$, there is $x_3\in O^+(x)$ such that $y_2-x_3\in \partial C^j$. Moreover, by Proposition \ref{OR} it has $y_2\notin\omega(x)$, and hence, $y_2\neq x_3$. Recalling $y_2, x_3\in \mathcal{R}(\Phi)$, it has $y_2\in (\mathcal{R}(\Phi)\cap\partial C^j_{x_3})\setminus\{x_3\}$, which contradicts Lemma \ref{RC}.

        Then, we show $y'-x'\notin \operatorname{Int}  C^{j-1}$ for any $y'\in \omega(y)$ and $x'\in \omega(x)$ with $y'\ne x'$ by contradiction. Suppose there are $x_4 \in \omega(x)$, $y_4 \in \omega(y)$ such that $y_4 - x_4 \in \operatorname{Int}  C^{j-1}$. Then, there are $y_5 \in O^+(y)$ with $y_5\neq x$ and $x_5 \in O^+(x)$ such that $y_5 - x_5 \in \operatorname{Int}  C^{j-1}$. On the other hand, noting $y_5\in \omega(y)\setminus \{x\}$, it has $y_5-x\notin C^{j-1}$ by \eqref{cj}. Thus by the connectedness of $O^+(x)$, there is $x_6\in O^+(x)$ such that $y_5-x_6\in \partial C^{j-1}$. Moreover, by Proposition \ref{OR} it has $y_5\notin\omega(x)$, and hence, $y_5\neq x_6$. Recalling $y_5, x_6\in \mathcal{R}(\Phi)$, it has $y_5\in (\mathcal{R}(\Phi)\cap\partial C^{j-1}_{x_6})\setminus\{x_6\}$, which contradicts Lemma \ref{RC}.
        
        We have proved the claim, which completes the proof.
	\end{proof}	
	
	\section{Structure of the Birkhoff center}\label{S-5}
	
	In this section, we study the structure of the Birkhoff center. Precisely, we present the intersection principle of the Birkhoff center in Subsection \ref{ip}, and prove our main results in Subsection \ref{M1}.

    \subsection{Intersection principle of the Birkhoff center}\label{ip}

    In this subsection, we aim to establish the intersection principle of $\mathcal{B}(\Phi)$ by analyzing the dynamics on the boundary of NICs and present some propositions which play an important role in the proof of the main results. For simplicity, we still label $T_*^i=\operatorname{max}\left \{\tau,t_*^i,T(C^i)\right\}$ for any $1\leq i< N$.
    
	\begin{lem}\label{IPNC}{\rm(Intersection principle of $\mathcal{B}(\Phi)$)}
		 Assume that {\rm(\hyperref[H-1]{H1})}-{\rm(\hyperref[H-2]{H2})} hold. Let $x\in \mathcal{R}(\Phi)$, then $\mathcal{B}(\Phi) \cap (\bigcup_{i=1}^{N} \partial C^i_x) = \{x\}$.
	\end{lem}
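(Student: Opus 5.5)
We argue by contradiction. Suppose $z\in\mathcal{B}(\Phi)$, $z\neq x$, and $z\in\partial C^j_x$ for some $1\le j\le N$ (the case $j=N$ is vacuous, since $\partial X=\emptyset$, so $0<j<N$). Choose recurrent points $z_n\in\mathcal{R}(\Phi)$ with $z_n\to z$. By Lemma \ref{RC} and Proposition \ref{coromega}, with $y=z_n$ and $z_n\in\omega(z_n)$, each $z_n\neq x$ satisfies
\[
z_n-x\in(\operatorname{Int}C^{j_n})\setminus C^{j_n-1}
\]
for some $j_n>0$. Since $z_n-x\to z-x\in\partial C^j$, we must have $z_n-x$ eventually in $\operatorname{Int}C^j$ or in $X\setminus C^j$. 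After passing to a subsequence, the layer index $j_n$ is either $\ge j+1$ for all $n$ or equal to $j$ for all $n$. (An infinite $N$ is handled by the same two-way split "inside $C^j$" versus "outside $C^j$".)

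We then push the picture forward in time using (H2). Fix $t\ge T^j_*$. Since $z-x\in C^j\setminus\{0\}$ (a boundary point of the closed cone lies in it), UESM gives
\[
\Phi_t(z)-\Phi_t(x)\in\operatorname{Int}C^j .
\]
By continuity, for $n$ large, $\Phi_t(z_n)-\Phi_t(x)\in\operatorname{Int}C^j$. In the case "$z_n-x\notin C^j$", the contradiction must come from recurrence. By Theorem \ref{limit_set_dichotomy} applied to the pair $x,z_n$, the sets $\omega(x)$ and $\omega(z_n)$ are strongly ordered with $\{C^i\}_{i\ge j_n}$ and unordered with $\{C^i\}_{i<j_n}$. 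Since $x\in\omega(x)$ and $\Phi_t(z_n)\in\omega(z_n)$ (both are recurrent and $\omega$-limit sets are invariant), and $\Phi_t(x)\in\omega(x)$, the relation $\Phi_t(z_n)-\Phi_t(x)\in\operatorname{Int}C^j$ forces $j_n\le j$. This contradicts $z_n-x\notin C^j$, which means $j_n\ge j+1$. Note that $\Phi_t(z_n)\ne\Phi_t(x)$ because the difference lies in $\operatorname{Int}C^j$ and $0\notin\operatorname{Int}C^j$ for $j<N$.

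In the other case, $j_n=j$ for all large $n$. Here Theorem \ref{limit_set_dichotomy} shows that $\omega(x)\cup\omega(z_n)$ is unordered with $C^{j-1}$ and strongly ordered with $C^j$, uniformly in $n$. To reach a contradiction we need a backward-time version: $z$ lies on $\partial C^j_x$ but is a limit of points in the interior layer. Since $\mathcal B(\Phi)$ is invariant and $z$ has backward preimages in $\mathcal B(\Phi)$, we take $s=T^j_*$ and choose $\tilde z\in\mathcal B(\Phi)$ and $\tilde x\in\omega(x)$ with $\Phi_s(\tilde z)=z$ and $\Phi_s(\tilde x)=x$. Points $\tilde z_n\in\mathcal R(\Phi)$ approximating $\tilde z$ (by compactness of $\Gamma$ and a diagonal choice of preimages of $z_n$ inside $\omega(z_n)$) still satisfy $\tilde z_n-\tilde x\in\operatorname{Int}C^j\setminus C^{j-1}$ by the dichotomy. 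Hence $\tilde z-\tilde x\in C^j$. If $\tilde z\ne\tilde x$, UESM gives $z-x\in\operatorname{Int}C^j$, contradicting $z\in\partial C^j_x$. If $\tilde z=\tilde x$, then $z=x$, again a contradiction.

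The main obstacle is this backward step: producing preimages $\tilde z_n$ of $z_n$ that lie in $\omega(z_n)$, converge to a preimage of $z$, and are paired with a common preimage $\tilde x$ of $x$ in $\omega(x)$. We would handle it via the compactness of $\Gamma$ and the closedness of the graph of $\Phi_s$. Specifically, we pick $\tilde x\in\omega(x)$ with $\Phi_s(\tilde x)=x$ and preimages $\tilde z_n\in\omega(z_n)$, extract a convergent subsequence $\tilde z_n\to\tilde z$, and note that continuity gives $\Phi_s(\tilde z)=z$. The step "strongly ordered $\omega$-sets remain in $C^j$ in the limit" uses only the closedness of $C^j$.
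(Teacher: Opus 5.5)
Your proof is correct, but it is organized differently from the paper's. The paper pulls back before approximating. It chooses $z^*\in\mathcal{B}(\Phi)$ and $x^*\in\omega(x)$ with $\Phi_\tau(z^*)=z$ and $\Phi_\tau(x^*)=x$. It observes that UESM forces $z^*-x^*\notin C^j$, since otherwise $z-x$ would lie in $\operatorname{Int}C^j$ or be $0$. Only then does it approximate $z^*$ by recurrent points $z_n$. Because $X\setminus C^j$ is open, every such approximant satisfies $z_n-x^*\notin C^j$, while continuity gives $\Phi_{2\tau}(z_n)-\Phi_{2\tau}(x^*)\in\operatorname{Int}C^j$. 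One application of Theorem~\ref{limit_set_dichotomy} to the pair $(x^*,z_n)$ then gives the contradiction, with no case distinction.

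You instead approximate $z$ itself, which sits on $\partial C^j_x$, so the approximants may fall on either side and you need two cases. Your ``outside'' case uses the same mechanism as the paper, and there you do not even need a backward step. Your ``inside'' case is also valid. However, it requires pulling back the approximants inside their own $\omega$-limit sets, extracting a convergent subsequence in the compact set $\Gamma$, and then using closedness of $C^j$ plus UESM. This is exactly the extra work the paper avoids by pulling back the single point $z$ first, which turns the boundary condition into an open one.

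One small correction: in the inside case you only know $j_n\le j$, not $j_n=j$. A layer index $j_n<j$ cannot be excluded unless you choose $j$ minimal with $z\in\partial C^j_x$. This does not affect your argument. All you use is $\tilde z_n-\tilde x\in\operatorname{Int}C^{j_n}\subset C^j$, which follows from two facts: the pair $\omega(z_n)$, $\omega(x)$ is strongly ordered with respect to $C^{j_n}$, and $\tilde z_n\neq\tilde x$ because $z_n\neq x$. Also, the dichotomy is a statement about the pair of sets $\omega(x)$, $\omega(z_n)$, not about their union.
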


    \begin{proof}
	    Suppose on the contrary that, there is $z \in \mathcal{B}(\Phi)$ and $j>0$ such that $z \in\partial C^j_x$ and $z\neq x$. By {\rm(\hyperref[H-2]{H2})}, there exists a $\tau > T_*^j,
        $ such that $\Phi_\tau(z) - \Phi_\tau(x) \in \operatorname{Int} C^j$. Noting $\mathcal{B}(\Phi)$ is invariant, there is $z^*\in \mathcal{B}(\Phi)$ such that $\Phi_{\tau}(z^*)=z$. Also since $x\in \omega(x)$, there is $x^*\in \omega(x)$ such that $\Phi_{\tau}(x^*)=x$. Again, by {\rm(\hyperref[H-2]{H2})}, it has $z^* - x^* \notin C^j$. As $z^* \in \mathcal{B}(\Phi)$, take a sequence $\{z_n\}_{n\geq1} \subset \mathcal{R}(\Phi)$ such that $z_n\rightarrow z^*$ as $n\rightarrow \infty$. Since $z^* - x^* \notin C^j$, without loss of generality, we can assume $z_n - x^* \notin C^j$ for any $n\geq1$. Noting $x^*\in \mathcal{R}(\Phi)$ and $z_n\in \mathcal{R}(\Phi)$, by Theorem \ref{limit_set_dichotomy} for $x^*$ and $z_n$, we have $\omega(z_n) \nsim_{C^j} \omega(x^*) $, and hence, $\omega(z_n)  \nsim_{C^j} \omega(x)$ for any $n\geq1$. In particular, $z_n  \nsim_{C^j} x$ for any $n\geq1$.
        
        On the other hand, since $\Phi_\tau(z) - \Phi_\tau(x) \in \operatorname{Int} C^j$, we also can assume $\Phi_{2\tau}(z_n) - \Phi_{2\tau}(x^*) \in \operatorname{Int} C^j$ for any $n\geq1$. By Theorem \ref{limit_set_dichotomy} again, we obtain $\omega(z_n) \approx_{C^j} \omega(x)$, and hence, $z_n \approx_{C^j}x$ for any $n\geq1$, a contradiction. Thus, we have completed the proof.
	\end{proof}

    \begin{prop}\label{Blayer}
     Let $x\in \mathcal{R}(\Phi)$ and $B$ be a connected component of $\mathcal{B}(\Phi)$. Then there is $j>0$ such that
		\[
		B\setminus \left\{x\right\} \subset (\operatorname{Int} C_x^{j}) \setminus C_x^{j-1}.
		\]
    \end{prop}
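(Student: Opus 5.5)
The argument is a connectedness argument built on Lemma \ref{IPNC}. Fix $x\in\mathcal{R}(\Phi)$ and the component $B$. First I will show that the sets
\[
L_i:=(\operatorname{Int}C^i_x)\setminus C^{i-1}_x,\qquad 1\le i\le N,
\]
give a partition of $X\setminus\big(\{x\}\cup\bigcup_{i=1}^N\partial C^i_x\big)$ into relatively open pieces. Take $v\neq x$ that lies on no boundary $\partial C^i_x$. Let $i$ be the least index with $v\in C^i_x$; such an index exists because $C^N=X$, and $i\ge1$ because $v\ne x$. Then $v\notin\partial C^i_x$ forces $v\in\operatorname{Int}C^i_x$. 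Also $v\notin C^{i-1}_x$, so $v\in L_i$. Since $C^{i-1}_x$ is closed and $\operatorname{Int}C^i_x$ is open, each $L_i$ is open. The $L_i$ are pairwise disjoint because the cones are nested.

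By Lemma \ref{IPNC}, $B\setminus\{x\}$ misses every $\partial C^i_x$. It therefore lies in $\bigsqcup_i L_i$, which is a disjoint union of open sets.

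Next I treat two cases.

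\textbf{Case $x\notin B$.} Here $B$ is connected and is covered by the disjoint open sets $L_i$. Connectedness forces $B$ into a single $L_j$, and we are done.

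\textbf{Case $x\in B$.} This is the main obstacle, because $B\setminus\{x\}$ need not be connected; several pieces of $B$ could meet at $x$ and a priori sit in different layers. My plan is to run a local argument near $x$ using recurrence together with Theorem \ref{limit_set_dichotomy}.

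Suppose $B\setminus\{x\}$ meets both $L_j$ and $L_k$ with $j<k$. Since $\mathcal{R}(\Phi)$ is dense in $B$ and the $L_i$ are open, we can pick recurrent points $y\in B\cap L_j$ and $w\in B\cap L_k$. By Theorem \ref{limit_set_dichotomy} applied to the pairs $(x,y)$ and $(x,w)$, $\omega(y)$ is strongly ordered with $C^j$ relative to $\omega(x)$, and $\omega(w)$ is unordered relative to $\omega(x)$ at level $k-1\ge j$. Applying the theorem to the pair $(y,w)$ gives a single index $m$ that describes the order between $\omega(y)$ and $\omega(w)$.

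I would derive a contradiction as follows. Use the openness of $L_j,L_k$ at the points $x,y,w$ and their recurrence. Choose times $t$ at which $\Phi_t(x)$, $\Phi_t(y)$, $\Phi_t(w)$ all return close to $x$, $y$, $w$, arranged via Proposition \ref{IP} if simultaneous returns are needed. This transports the relations into $\omega$-limit sets. The aim is to show that $y$ and $w$ must lie in the same layer relative to $x$. If the layers differ, then $w-x$ and $y-x$ sit in incompatible layers, and I expect the interaction with $\omega(x)\ni x$ to contradict the single index attached to the pair $(y,w)$.

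If that route does not work, my fallback is this. Replace $x$ by a nearby recurrent point $x'\notin B$, or by a suitable backward preimage; then the previous case puts $B$ in a single layer relative to $x'$. Then let $x'\to x$ and use Lemma \ref{IPNC} to rule out collapse onto boundaries. Because $B\setminus\{x\}$ avoids the closed set $\bigcup_i\partial C^i_x$, membership in a layer is stable under this limit.

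Throughout I will rely on the invariance of $\mathcal{B}(\Phi)$ from (H1) and the eventual strong monotonicity from (H2) to push boundary relations into interiors.
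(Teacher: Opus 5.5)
Your partition of $X\setminus\bigl(\{x\}\cup\bigcup_{i}\partial C^i_x\bigr)$ into the open layers $L_i$ is correct, and so is your treatment of the case $x\notin B$; both coincide with the paper. The gap is in the case $x\in B$, which is where all the difficulty lies.

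Your main route cannot work as described. Theorem \ref{limit_set_dichotomy} applied to the pairs $(x,y)$, $(x,w)$, $(y,w)$ only produces three layer indices. Nothing prevents three recurrent points (for instance three equilibria in different components of $\mathcal{B}(\Phi)$) from lying pairwise in different layers. What distinguishes your situation is that $y$ and $w$ are joined to $x$ inside $B$, and that information never enters the argument. ``I expect the interaction with $\omega(x)$ to contradict \ldots'' is not an argument.

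The claim that $\mathcal{R}(\Phi)$ is dense in $B$ is also unjustified. Recurrent points are dense in $\mathcal{B}(\Phi)$, but those approximating a point of $B$ may belong to other components.

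Your fallback does work when there are recurrent points $x'_n\notin B$ with $x'_n\to x$: pass to a subsequence with constant layer index $j$, take limits in the closed sets $C^j$ and $X\setminus\operatorname{Int}C^{j-1}$, then apply Lemma \ref{IPNC}. But such points need not exist, e.g.\ when $B$ is an isolated component of $\mathcal{B}(\Phi)$. Backward preimages of $x$ in $\mathcal{B}(\Phi)$ do not help either: the orbit arc joining them to $x$ is connected, so they lie in $B$.

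The missing idea is to use Lemma \ref{IPNC} at a \emph{moving} recurrent base point, as a separation tool inside a connected piece of $B$.
\begin{enumerate}
\item Suppose $B\setminus\{x\}$ meets $L_j$ at $m_j$ and $L_k$ at $m_k$, with $j<k$. Put $B^{j+}=B\cap C^j_x$ and $B^{j-}=B\setminus\operatorname{Int}C^j_x$. By Lemma \ref{IPNC} these closed sets meet only at $x$, so each is connected: a closed splitting of either would disconnect $B$.
\item Since $B^{j-}$ is connected and contains $x$ and $m_k\neq x$, it contains points $p\neq x$ arbitrarily close to $x$. Choose one with $m_j-p\in\operatorname{Int}C^j$; then $p-x\notin C^j$ by Lemma \ref{IPNC}.
\item Take a recurrent point $p^*$ near $p$, not necessarily in $B$, with $m_j-p^*\in\operatorname{Int}C^j$ and $p^*-x\notin C^j$.
\item The connected set $B^{j+}$ contains $m_j\in\operatorname{Int}C^j_{p^*}$ and $x\notin C^j_{p^*}$ (by symmetry of $C^j$), so it meets $\partial C^j_{p^*}$. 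By Lemma \ref{IPNC}, the only point of $\mathcal{B}(\Phi)$ on $\partial C^j_{p^*}$ is $p^*$.
\item Hence $p^*\in B^{j+}$, i.e.\ $p^*-x\in C^j$, a contradiction.
\end{enumerate}
This argument needs neither density of $\mathcal{R}(\Phi)$ in $B$ nor recurrent points outside $B$.
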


    \begin{proof}
    The proposition is trivially true when $B=\{x\}$. Hence, without loss of generality, we assume $B\ne \{x\}$. By virtue of Lemma \ref{IPNC}, to prove this proposition, it suffices to show that there is $j>0$ such that $B \subset C^{j}_x\setminus \operatorname{Int}C^{j-1}_x$. 
    
    Since $X = \bigcup_{i>0} \left(C^i_x\setminus\operatorname{Int}C^{i-1}_x\right)$, there is $j>0$ such that $(B\setminus\{x\})\cap (C^{j}_x\setminus \operatorname{Int}C^{j-1}_x)\ne \emptyset.$ Moreover, by Lemma \ref{IPNC}, it has 
    \begin{equation}\label{B1}
            (B\setminus\{x\} )\cap ((\operatorname{Int}C^{j}_x)\setminus C^{j-1}_x)\ne \emptyset.
    \end{equation}
    Then, we consider the following two cases: (i) $x\notin B$; (ii) $x\in B$. 

    (i) $x\notin B$. It follows from Lemma \ref{IPNC} that $B\cap (\partial C^{j}_x \cup \partial C^{j-1}_x)=\emptyset$, and hence, $B\cap \partial((\operatorname{Int} C_x^{j})\setminus C_x^{j-1})=\emptyset$. Thus by \eqref{B1} and connectedness of $B$, it has $B \subset (\operatorname{Int} C_x^{j}) \setminus C_x^{j-1}.$
        
    (ii) $x\in B$. Suppose, for contradiction, that $B \not\subset C^{j}_x\setminus \operatorname{Int}C^{j-1}_x$. Then there is $k>0$ with $k\ne j$ such that $(B\setminus \{x\})\cap  \left (C^{k}_x\setminus \operatorname{Int}C^{k-1}_x\right )\ne \emptyset$. Without loss of generality, we assume $k>j$. By Lemma \ref{IPNC}, it has $(B\setminus\{x\} )\cap ((\operatorname{Int}C^{k}_x)\setminus C^{k-1}_x)\ne \emptyset$.
    
    Let $B^{j+}=B\cap C_x^j$ and $B^{j-}=B\setminus \operatorname{Int}C_x^j$. Then, $B^{j+}$ and $B^{j-}$ are nonempty closed subsets of $B$. Then $B^{j+}$ and $B^{j-}$ are connected. Indeed, for \(B^{j+}\), suppose, to the contrary, that \(B^{j+}\) is disconnected. Then there exist two disjoint nonempty closed subsets \(B_1^{j+}\) and \(B_2^{j+}\) of \(B^{j+}\) with \(x\in B_1^{j+}\), such that $B^{j+}=B_1^{j+}\cup B_2^{j+}$. Since $B^{j+}\cap B^{j-}=\{x\}$ by Lemma~\ref{IPNC}, it has $(B_1^{j+}\cup B^{j-})\cap B_2^{j+}=\emptyset$. Thus, there are two disjoint closed sets $B_1^{j+}\cup B^{j-}$ and $B_2^{j+}$ such that $B=(B_1^{j+}\cup B^{j-})\cup B_2^{j+}$, which contradicts the connectedness of $B$. A similar argument shows that \(B^{j-}\) is connected.

    Let $m_i\in (B\setminus\{x\}) \cap ((\operatorname{Int}C^{i}_x)\setminus C^{i-1}_x)$ for $i=j,k$. Then $m_k\in B^{j-}$ as $k>j$. Together with the connectedness of $B^{j-}$ and fact $m_j \in \operatorname{Int} C^{j}_x$, there is $p\in B^{j-}$ with $x\ne p$ such that $m_j-p \in \operatorname{Int} C^{j}$ and $p-x \notin \operatorname{Int} C^{j}$. Since $p\in B$, there is $p^*\in \mathcal{R}(\Phi)$ close to $p$ with $p^*\ne x$ such that $m_j-p^* \in \operatorname{Int} C^{j}$ and $p^*-x \notin \operatorname{Int} C^{j}$.
    
    Thus, $p^* \in B^{j+}$. (Otherwise, suppose $p^* \notin B^{j+}$. Let $B^{p}_{1}=B^{j+}\cap  C^j_{p^*}$, $B^{p}_{2}=B^{j+} \setminus \operatorname{Int}C^j_{p^*}$. Then $B^{p}_{1}$ is nonempty as $m_j\in B^{p}_{1}$, $B^{p}_{2}$ is nonempty as $x\in B^{p}_{2}$, and $B^{j+}=B^{p}_{1}\cup B^{p}_{2}$. Moreover, $B^{p}_{1}\cap B^{p}_{2}=\emptyset$, since $B^{j+}\cap \partial C^j_{p^{*}}=\emptyset$ by Lemma \ref{IPNC}. Thus, there are two disjoint closed sets $B^{p}_1$ and $B^{p}_2$ such that $B^{j+}=B^{p}_1\cup B^{p}_2$, which contradicts the connectedness of $B^{j+}$.) Therefore, $p^*-x\in C^{j}$, and hence $p^*-x\in \operatorname{Int}C^j$ by Lemma \ref{IPNC}, which contradicts $p^*-x \notin \operatorname{Int} C^{j}$.
    \end{proof}

\begin{prop}\label{Blayer2}
    Let $B$ be a connected component of $\mathcal{B}(\Phi)$. Then there is $j>0$ such that for any $x\in B$, it has
		$$B \setminus\left\{x\right\}\subset (\operatorname{Int} C_x^{j} )\setminus C_x^{j-1}.$$
    \end{prop}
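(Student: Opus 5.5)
The plan is to upgrade Proposition~\ref{Blayer} in two ways. First, show that the index $j$ it produces does not depend on the recurrent base point. Second, remove the restriction that the base point be recurrent. We may assume $B$ contains at least two points, since otherwise the statement is trivial.

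\emph{Step 1 (symmetry of layers).} Each $C^i$ satisfies $lv\in C^i$ for all $l\in\mathbb R$, so $C^i=-C^i$, and hence $\operatorname{Int}C^i=-\operatorname{Int}C^i$. Therefore $y-x\in(\operatorname{Int}C^j)\setminus C^{j-1}$ if and only if $x-y\in(\operatorname{Int}C^j)\setminus C^{j-1}$. For each pair of distinct $x,y$ this defines a layer index $\ell(x,y)=\ell(y,x)$, whenever $y-x$ lies in an open layer. Consequently, if $x,y\in B\cap\mathcal R(\Phi)$ are distinct, Proposition~\ref{Blayer} applied at $x$ and at $y$ gives $j(x)=\ell(x,y)=j(y)$. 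So the index is constant on $B\cap\mathcal R(\Phi)$ as soon as $B$ contains two recurrent points.

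\emph{Step 2 (arbitrary base point).} Fix $x\in B$, possibly non-recurrent, and $y\in B\setminus\{x\}$. Choose recurrent points $x_n\to x$ with $x_n\neq y$. These need not lie in $B$, since $B$ is only a component of the closure. Proposition~\ref{Blayer} applied to $x_n$ and $B$ gives indices $j_n$ with $B\setminus\{x_n\}\subset(\operatorname{Int}C^{j_n}_{x_n})\setminus C^{j_n-1}_{x_n}$.

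To bound $j_n$, pick $m$ with $y-x\in\operatorname{Int}C^m$; this is possible since $C^N=X$. Then $y-x_n\in\operatorname{Int}C^m$ for large $n$, which forces $j_n\le m$. Passing to a subsequence, $j_n\equiv j$. Letting $n\to\infty$ and using closedness of the cones yields $B\setminus\{x\}\subset C^j_x\setminus\operatorname{Int}C^{j-1}_x$.

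\emph{Step 3 (removing the boundary).} This is the main obstacle: we must exclude $y-x\in\partial C^j\cup\partial C^{j-1}$ when $x$ is not recurrent, so Lemma~\ref{IPNC} does not apply directly. I would mimic the proof of Lemma~\ref{IPNC}. Suppose $y-x\in\partial C^{i}$ with $i\in\{j-1,j\}$. Using invariance of $\mathcal B(\Phi)$ (and of $B$, as a component of an invariant set under a continuous semiflow), take backward points $x^*,y^*\in\mathcal B(\Phi)$ with $\Phi_\tau(x^*)=x$ and $\Phi_\tau(y^*)=y$ for some $\tau>T^i_*$. By (H2), $y^*-x^*\notin C^i$ while $\Phi_{2\tau}(y^*)-\Phi_{2\tau}(x^*)\in\operatorname{Int}C^i$. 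Approximate $x^*$ and $y^*$ by recurrent points $x_n,y_n$. Theorem~\ref{limit_set_dichotomy} then forces the pair $\omega(x_n),\omega(y_n)$ to be simultaneously unordered and strongly ordered with respect to $C^i$, a contradiction. The delicate point is that $x_n$ is now only approximately recurrent-related to $x$. Hence the contradiction must come from the open conditions at times $0$ and $2\tau$ holding for all large $n$, exactly as in Lemma~\ref{IPNC}.

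\emph{Step 4 (constancy of $j$).} By Steps 2--3 every $x\in B$ has an index $j(x)$. By Step 1 applied to arbitrary distinct points of $B$, $j(x)=\ell(x,y)=j(y)$ for all distinct $x,y\in B$. Hence $j$ is constant on $B$, which completes the proof.
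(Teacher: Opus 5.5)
Your argument is correct but organized differently from the paper's. The paper first fixes $j$ from the recurrent points of $B$, as in your Step 1; this tacitly uses $B\cap\mathcal R(\Phi)\neq\emptyset$, which holds because $B$ is compact and invariant. It then treats a non-recurrent $x\in B$ by contradiction in two cases. It approximates $\Phi_{T^{j-1}_*}(x)$, respectively a backward point $y'\in B$, by a recurrent point. Connectedness of $B$ together with Lemma \ref{IPNC} as stated forces that recurrent approximant into $B$, which contradicts the uniform index.

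You never need the approximants to lie in $B$. You apply Proposition \ref{Blayer} at recurrent $x_n\to x$ lying anywhere in $\mathcal B(\Phi)$, extract a constant index, and pass to the limit to land in $C^j_x\setminus\operatorname{Int}C^{j-1}_x$. You then clear the boundary by rerunning the proof of Lemma \ref{IPNC} with both backward points approximated. That step is sound. For large $n$ one has $y_n-x_n\notin C^i$ and $\Phi_{2\tau}(y_n)-\Phi_{2\tau}(x_n)\in\operatorname{Int}C^i$. Since $x_n,\Phi_{2\tau}(x_n)\in\omega(x_n)$ and $y_n,\Phi_{2\tau}(y_n)\in\omega(y_n)$, neither alternative of Theorem \ref{limit_set_dichotomy} at level $i$ can hold.

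Your route gives three things:
\begin{enumerate}
\item a stronger reusable fact, namely Lemma \ref{IPNC} without the recurrence assumption on the base point, so no two distinct points of $\mathcal B(\Phi)$ differ by an element of any $\partial C^i$;
\item a uniform treatment of all base points with no case split;
\item independence from the existence of a recurrent point in $B$.
\end{enumerate}
The paper's route uses Lemma \ref{IPNC} only in its stated form and needs no subsequence extraction.

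One minor point: your bound $j_n\le m$ via $C^N=X$ is vacuous. For finite $N$ the indices already lie in $\{1,\dots,N\}$, so you should simply cite that. For $N=\infty$ it gives no bound, but the paper's own layer decomposition also implicitly assumes $N$ finite.
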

    
	\begin{proof}
	     If $B$ is a singleton, the result is trivial. Thus, in the remainder, we assume $B$ contains at least two points. 

         First, we claim \textit{there is $j>0$ such that $B \setminus \{x\}\subset (\operatorname{Int} C_{x}^{j}) \setminus C_{x}^{j-1}$ for any $x\in B\cap \mathcal{R}(\Phi)$}. In fact, by Proposition \ref{Blayer}, for any $x\in B\cap \mathcal{R}(\Phi)$, there is $j(x)>0$ such that $B \setminus \{x\}\subset (\operatorname{Int} C_{x}^{j(x)}) \setminus C_{x}^{j(x)-1}$. Thus, we only need to show there is a common index $j$ for any $x\in B\cap \mathcal{R}(\Phi)$. If not, suppose there are $x_1, x_2\in B\cap \mathcal{R}(\Phi)$ and $j_1\neq j_2$ such that $B \setminus \{x_i\}\subset (\operatorname{Int} C_{x_i}^{j_i}) \setminus C_{x_i}^{j_i-1}$ for $i=1,2$. Thus, it has $x_1-x_2\in (\operatorname{Int}C^{j_2})\setminus C^{j_2-1} $ and $ x_2-x_1\in (\operatorname{Int}C^{j_1})\setminus C^{j_1-1}$, a contradiction.

         Then, we show that $B \setminus \{x\}\subset (\operatorname{Int} C_{x}^{j}) \setminus C_{x}^{j-1}$ for any $x\in B\setminus \mathcal{R}(\Phi)$. If not, there are $x\in B\setminus \mathcal{R}(\Phi)$ and $y\in B\setminus \{x\}$ such that (i) $y-x\in C^{j-1}$ or (ii) $y-x\notin \operatorname{Int}C^{j}$. 
         
        (i) $y-x\in C^{j-1}$. By {\rm(\hyperref[H-2]{H2})}, it has $\Phi_{T^{j-1}_*}(y)-\Phi_{T^{j-1}_*}(x)\in \operatorname{Int} C^{j-1}$. Let $x_1\in B\cap \mathcal{R}(\Phi)$ and then $x_1-\Phi_{T^{j-1}_*}(x)\notin C^{j-1}$ by the claim.  Since $\Phi_{T^{j-1}_*}(x)\in B\subset \mathcal{B}(\Phi)$, we can choose $x_2\in \mathcal{R}(\Phi)$ close to $\Phi_{T^{j-1}_*}(x)$ enough, such that $\Phi_{T^{j-1}_*}(y)-x_2\in \operatorname{Int} C^{j-1}$ and $x_1-x_2\notin C^{j-1}$. Then by the connectedness of $B$, $B\cap \partial C_{x_2}^{j-1}\neq\emptyset$. Recall Lemma \ref{IPNC} which implies $\mathcal{B}(\Phi)\cap \partial C_{x_2}^{j-1}=\{x_2\}$. Thus, $x_2\in B\cap \mathcal{R}(\Phi)$. Then by the claim, it has $\Phi_{T^{j-1}_*}(y)\in (\operatorname{Int} C_{x_2}^{j}) \setminus C_{x_2}^{j-1}$, and hence, $\Phi_{T^{j-1}_*}(y)-x_2\notin C^{j-1}$, a contradiction with $\Phi_{T^{j-1}_*}(y)-x_2\in \operatorname{Int} C^{j-1}$.

         (ii) $y-x\notin \operatorname{Int}C^{j}$. By {\rm(\hyperref[H-2]{H2})} and the invariance of $B$, there exist $x'\in O^{-}(x)\cap B$ and $y'\in O^{-}(y)\cap B$ such that $y'-x'\notin C^j.$
If $y'\in \mathcal{R}(\Phi)$, then the claim gives
$y'-x'\in \operatorname{Int} C^j.$
which contradicts $y'-x'\notin C^j$. Thus,
$y'\notin \mathcal{R}(\Phi)$. Let $x_1\in B\cap \mathcal{R}(\Phi)$ and then $y'-x_1\in (\operatorname{Int} C^j)\setminus C^{j-1}$ by the claim.
Since $y'\in B\subset \mathcal{B}(\Phi)$, we can choose $y_2\in \mathcal{R}(\Phi)$ close to $y'$ enough such that
$y_2-x_1\in \operatorname{Int} C^j$ and $y_2-x'\notin C^j.$
Arguing as in (i), we obtain $y_2\in B\cap \mathcal{R}(\Phi)$. Since $x'\in B$, the claim implies $y_2-x'\in \operatorname{Int} C^j,$ contradicting
$y_2-x'\notin C^j$.

         Thus, we have $B \setminus \{x\}\subset (\operatorname{Int} C_{x}^{j}) \setminus C_{x}^{j-1}$ for any $x\in B\setminus \mathcal{R}(\Phi)$. Together with the claim, we have completed the proof.
	\end{proof}

	\subsection{Proof of Main Theorems }\label{M1}

    Now, we are ready to prove our main results.

    \begin{proof}[Proof of Theorem \ref{Thm_2.3}]
    By Proposition \ref{Blayer2}, there is $j>0$ such that for any distinct $x,y\in B$, it has $x-y\in \operatorname{Int}C^j$ and $x-y\notin C^{j-1} $. On the one hand, since $C^j\subset C^i$ for any $i>j$, it follows that $B$ is strongly ordered with $\left \{ C^i \right \} _{i=j}^{N}$. On the other hand, since $C^i\subset C^{j-1}$ for any $i<j-1$, $B$ is unordered with $\left \{ C^i \right \}_{i=0}^{j-1}$.
    \end{proof}

    \begin{proof}[Proof of Theorem \ref{Thm_2.7}]
    Let $B$ be a connected component of $\mathcal{B}(\Phi)$. We first show $x-y\notin\Pi$ for any distinct $x,y\in B$. Fix $s>0$. Since $B$ is invariant, there exist $\tilde{x},\tilde{y}\in B$ such that $\Phi_s(\tilde{x})=x$ and $\Phi_s(\tilde{y})=y$. Moreover, $\Phi_t(\tilde{x}),\Phi_t(\tilde{y})\in B$ for all $t\ge0$. By the virtue of Proposition \ref{Blayer2}, there is $j>0$ such that $a-b\in (\operatorname{Int}C^j)\setminus C^{j-1} $ for any distinct $a,b\in B$. Thus, $\Phi_t(\tilde{x})-\Phi_t(\tilde{y})\in (C^j\setminus C^{j-1})\cup\{0\}$ for any $t\ge 0$. Hence, it has $x-y\notin\Pi$ by Hypothesis {\rm(\hyperref[H-3]{H3})}.
    
    Let $H$ be a $d$-dimensional linear subspace of $X$ with $H\oplus \Pi=X$. Then we can define a projection map $P$ from $B$ to $H$ as
	$$P: B \longrightarrow H;\ x\longmapsto P(x).$$
	Then $P$ is injective. (Otherwise, there are two distinct points $u,v\in B$ such that $P(u)=P(v)$ and thus $u-v\in \Pi$, a contradiction.) Thus, as $B$ is compact and invariant, it follows that $B$ is homeomorphic to a compact invariant set in $\mathbb{R}^{d}$. 	   
    \end{proof}

    \begin{proof}[Proof of Theorem \ref{Thm_2.10}]
    It can be obtained immediately from Theorem \ref{Thm_2.3} and Theorem \ref{Thm_2.7} and the fact that $\text{supp}(\mu)\subset \mathcal{B}(\varphi)$ by \cite[p.28]{M12} or \cite[Appendix A]{J20}.
    \end{proof}

	\begin{proof}[Proof of Theorem \ref{Cor_2.4}]
      Let $\mu$ be an ergodic invariant probability measure. Then by Theorem \ref{Thm_2.10}, there exist a compact connected set $A\subset\mathbb{R}^d$ ($d=1,2$) and a homeomorphism $P:\operatorname{supp}\mu\to A.$ The flow on $\operatorname{supp}\mu$ induces a continuous flow $\Psi_t=P\circ\Phi_t\circ P^{-1}$ on $A$. Note every recurrent point of a continuous flow on an subset of $\mathbb{R}^2$ is either fixed or periodic \cite{SeibertTulley}. Consequently, $\operatorname{supp}\mu$ is either an equilibrium or a periodic orbit of $\Phi$.

       Therefore, $\Phi$ has zero measure-theoretic entropy with respect to $\mu$. Since $\mu$ was arbitrary, the Variational Principle (more details see \cite{D70}) yields the topological entropy of $\Phi$ is zero. This completes the proof.
	\end{proof}

    \begin{rmk}
    Recently, Bochi and Morris \cite{BochiMorris} proved that every recurrent point of a continuous semiflow on a compact subset of $S^2$ is either fixed or periodic. Therefore, the assumption that $\Phi_t$ extends to a flow on $\Gamma$ is no longer needed.
   \end{rmk}

\section{Application}\label{S-6}
	In this section, we present concrete systems that possess NICs structures. Specifically, in Subsection \ref{finite}, we provide an example of bidirectional cyclic feedback systems that exhibit finite NICs. In Subsection \ref{infinite}, we present a parabolic equation in $S^1$ to illustrate that it possesses infinite NICs.

	\subsection{Bidirectional cyclic feedback systems}\label{finite}

    Consider a bidirectional cyclic feedback system on  $\mathbb{R}^n$($n\geq 3$)
    \begin{equation}\label{Three independent variables}
		\dot{x}_i=f_i\left(x_{i-1},x_i,x_{i+1}\right),\quad i=1,2,\dots,n,
	\end{equation}
	 where $x_0=x_n$, $x_{n+1}=x_1$ and the nonlinearity $f$ satisfies the following assumptions: 
     \begin{enumerate}
         \item [(a)] $f=\left(f_1,\dots,f_n\right)$ is of class $C^1$ on $\mathbb R^n$;

     \item [(b)] For each $i=1,\dots,n$, either $$\delta_i \frac{\partial f_i}{\partial x_{i+1}} \ge 0,\ \delta_{i-1} \frac{\partial f_i}{\partial x_{i-1}} > 0;\ \text{ or }\delta_i \frac{\partial f_i}{\partial x_{i+1}} > 0,\ \delta_{i-1} \frac{\partial f_i}{\partial x_{i-1}} \ge 0,$$ where $\delta_i \in \{-1,1\}$ and $\delta_0=\delta_n$;

     \item [(c)] There exists $R>0$ such that
\[
    \langle f(x),x\rangle<0
    \qquad
    \text{for all }x\in\mathbb{R}^n\text{ with }|x|\geq R,
\]
where $\langle\cdot,\cdot\rangle$ denotes the standard inner product
on $\mathbb{R}^n$, and $|\cdot|$ is the induced Euclidean norm.
     \end{enumerate}

      \begin{thm}\label{5-1}
      For a bidirectional cyclic feedback system (\ref{Three independent variables}), any connected component of the Birkhoff center or the support of invariant measures is homeomorphic to a compact invariant set in $\mathbb{R}^2$. Furthermore, its topological entropy is zero.
	 \end{thm}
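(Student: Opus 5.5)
The plan is to verify (H1)--(H3) for the flow of \eqref{Three independent variables} with $d=2$. Then Theorems \ref{Thm_2.7} and \ref{Thm_2.10} give the embedding into $\mathbb{R}^2$, and Theorem \ref{Cor_2.4} gives zero entropy. Since the system is an ODE on $\mathbb{R}^n$ with $C^1$ vector field, it generates a flow, so the extension-to-a-flow hypothesis of Theorem \ref{Cor_2.4} is automatic.

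\emph{Step 1: (H1).} By (c), $\frac{d}{dt}|x|^2=2\langle f(x),x\rangle<0$ whenever $|x|\ge R$. Hence the ball $\{|x|\le R\}$ is positively invariant and absorbs bounded sets in finite time, uniformly on bounded sets by compactness of spheres. Solutions are therefore global and the semiflow is dissipative. Eventual compactness is trivial in finite dimensions, so a compact global attractor $\Gamma$ exists.

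\emph{Step 2: (H2).} First apply the change of variables $x_i\mapsto \epsilon_i x_i$ with $\epsilon_i\in\{\pm1\}$ chosen from $\delta_i$. This reduces to the case where all off-diagonal entries are nonnegative, except possibly one sign $\delta^*=\prod\delta_i$ on the cyclic link. For $u=x-y$, the difference satisfies the linear system $\dot u=A(t)u$, where $A(t)=\int_0^1 Df(sx+(1-s)y)\,ds$ is a cyclic tridiagonal matrix whose off-diagonal sign pattern obeys (b). I would use the integer-valued Lyapunov function (the cyclic sign-change number $N(u)$ of Mallet-Paret--Smith, or its bidirectional version in \cite{XCYWDZ,MPer}), which takes even values (or odd values, depending on $\delta^*$) $0\le N\le n$. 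I set $C^i=\overline{\{u: N(u)\le 2i\}}$, with the appropriate parity shift, and $C^N=X$. These are closed $k_i$-cones with $k_i$ increasing and solid. Invariance comes from the nonincrease of $N(u(t))$ along solutions, and strong monotonicity comes from the fact that $u(t)$ immediately leaves the nongeneric set where $N$ is undefined or drops.

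For UESM I need a \emph{uniform} time $T(C^i)$. Because $u$ ranges over differences of points in a compact absorbing set, the coefficients $A(t)$ lie in a compact family. I would argue by contradiction using compactness of the set of admissible $A(\cdot)$ (in the weak-$*$ sense) and of normalized initial data $u_0\in C^i$, $|u_0|=1$. The condition "strictly inside $C^i$ at time $T$" is open, and each limiting system still satisfies the strict sign condition (b), so it enters $\operatorname{Int}C^i$ at every positive time. This yields a uniform $T$. The strict inequalities in (b) are needed to pass to the limit, and for points far from the attractor I would either restrict to a neighborhood of $\Gamma$ or note that the abstract results only use the hypotheses near $\mathcal{B}(\Phi)\subset\Gamma$.

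\emph{Step 3: (H3) with $d=2$.} Take $\Pi=\{u: u_1=u_2=0\}$, which has codimension $2$. The standard lemma for cyclic feedback systems states that if $u(t)\ne0$ solves $\dot u=A(t)u$ and $u_1(t_0)=u_2(t_0)=0$, then $N$ drops strictly at $t_0$; more precisely, $u(t_0)$ lies in the "boundary" set where $N$ is not locally constant. In the setting of (H3), the backward representatives keep $\Phi_t(\tilde x)-\Phi_t(\tilde y)$ in the single layer $C^i\setminus C^{i-1}$ for all $t\ge0$ and for every $s$. Hence $N$ is constant along the difference trajectory on an open time interval around $s$, so $x-y\notin\Pi$. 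Combining the three steps with Theorems \ref{Thm_2.7}, \ref{Thm_2.10} and \ref{Cor_2.4} finishes the proof.

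The main obstacle I expect is the uniformity in Step 2, together with checking that the cones built from $N$ are genuinely solid $k$-cones with the required rank increase. The sign-change theory gives strong monotonicity pointwise but not uniformly, and the parabolic section warns that uniformity can fail. My first approach is the compactness/contradiction argument above, exploiting finite dimensionality and the strict inequality in (b).
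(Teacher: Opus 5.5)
Your proposal is correct and follows essentially the paper's route. You verify (H1)--(H3) with $d=2$, using the cones built from the integer-valued Lyapunov function and a codimension-two $\Pi$ on which two cyclically adjacent coordinates vanish (the paper takes $x_1=x_n=0$). (H3) then follows from the strict drop of $N_\Delta$ at non-generic points versus its constancy on $\mathcal N$ within a single layer.

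The uniformity you flag as the main obstacle is not actually an obstacle. The cited property $S_A(t,s)\bigl(\overline{K_h}\setminus\{0\}\bigr)\subset\operatorname{Int}\overline{K_h}$ holds for all $t>s\ge0$; this is exactly the strong monotonicity you already assert, and it makes any $T>0$ a UESM time for all pairs in $\mathbb{R}^n$. So the compactness argument is superfluous. You should also drop the fallback of verifying (H2) only near $\mathcal{B}(\Phi)$: the abstract proofs (e.g.\ of Proposition~\ref{R-dist}) apply (H2) to pairs $(y,y+b)$ with $b\in C^j\cap(\omega(z)-\omega(x))$, and $y+b$ need not lie in $\Gamma$.
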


	Before proving the theorem, we introduce some preliminary material
needed to construct a family of NICs for the bidirectional cyclic feedback system (\ref{Three independent variables}). 
    
    First, we recall the integer-valued Lyapunov function $N(\cdot)$;
see, e.g., \cite{XCYWDZ,MD,FOT,MalletParetSmith1990}. Let $\Delta=\prod_{i=1}^{n} \delta _{i} $. Given $x\in \mathbb{R}^n$ with $x_i\ne 0$ for every $i\in \{1,\dots,n\}$, $N_{\Delta}(x)$ is defined by $$N_\Delta(x) = \#\{\, i\in \{1,\dots,n\} \mid \delta_i x_i x_{i-1} < 0 \,\}+\frac{1+\Delta}{2}.$$ For any $x\in \mathbb{R}^n$, we define  
	 \[
	 N^m_\Delta(x)=\min_{\substack{y \in U_x \\ y_i \neq 0}} N_\Delta(y), \qquad 
	 N^M_\Delta(x)=\max_{\substack{y \in U_x \\ y_i \neq 0}} N_\Delta(y),
	 \]  
	 where \( U_x \) is a sufficiently small neighborhood of \( x \).
    One can extend the domain of $N_{\Delta}(\cdot)$ to an open and dense set $\mathcal{N} = \left\{ x \in \mathbb{R}^n \mid N^m_\Delta(x) = N^M_\Delta(x)\right\}$ by setting $N_\Delta(x) = N^m_\Delta(x) = N^M_\Delta(x)$ for $x\in \mathcal{N}$.

    Then, we consider the linear equation of (\ref{Three independent variables}):
\begin{equation}\label{LE}
\dot{z} = A(t)z, \quad A(t) = \int_{0}^{1} Df(rx^1(t) + (1-r)x^2(t))dr, 
\end{equation}
where \( x^1(t),\, x^2(t) \) are solutions of equation (\ref{Three independent variables}).
Let $z(t)$ be a solution of (\ref{LE}). By \cite[Proposition 2]{FOT} and \cite[Lemma 2.1]{XCYWDZ}, we have the following properties:
    \begin{enumerate}
        \item [($\alpha$)] \( z(t) \in \mathcal{N} \), $t\in \mathbb{R}$ except at finitely many time points;  
    \item [($\beta)$] if \( z(t_0) \in \mathcal{N} \), then \( (z_i(t_0), z_{i-1}(t_0)) \neq (0,0) \) for every $i=1,2,\dots,n$, where $z_0=z_n$ and $z_{n+1}=z_1$;  
    \item [($\gamma$)] if \( z(t_0) \notin \mathcal{N} \cup \{0\} \), then $N^m_\Delta(z(t_0)) = N_\Delta(z(t_0+\epsilon)) < N_\Delta(z(t_0-\epsilon)) = N^M_\Delta(z(t_0))$ for any sufficiently small $\epsilon>0$.
\end{enumerate}

Now, we construct a family of NICs for (\ref{Three independent variables}). Let $\tilde n = n$ if $n$ is odd; \(\tilde n = n+\Delta\) if \(n\) is even. For any $1\leq h\leq \frac{\tilde n-1}{2}$, define the sets  
	 \[
	 K_h = \{0\} \cup \{\, x \in \mathbb{R}^n \mid N^M_\Delta(x) \le 2h-1 \,\}.
	 \]  
	 In particular, define \( K_0 = \{0\} \). Clearly,  $\overline{K_h} = \overline{K_h \setminus \{0\}} $ and $
	 \operatorname{Int} K_h = K_h \setminus \{0\}$ for any $1\leq h\leq \frac{\tilde n-1}{2}$. Moreover, we have
     \begin{equation}\label{kk}
        \{0\}=\overline{K_0}
\subset \cdots \subset
\overline{K_{h-1}}
\subset \overline{K_h}
\subset \cdots \subset
\overline{K_{(\tilde n+1)/2}}=X.
     \end{equation}
 Note that $N_\Delta$ is locally constant on $\mathcal N$ and takes values in
$\{1,3,\ldots,\tilde n\}$. It follows from the definition of $K_h$
and \eqref{kk} that
\begin{equation}\label{2h-1}
    x\in \mathcal{N}\cap(\overline{K_h}\setminus \overline{K_{h-1}})
\quad\Longrightarrow\quad
N_\Delta(x)=2h-1
\end{equation}
for every $1\le h\le (\tilde n+1)/2$.
     By \cite[Theorem 2, Lemma 1]{FOT} and \cite[Propositions 3.1 and 3.2]{XCYWDZ}, we also have the following properties:
    \begin{enumerate}
    \item [($\zeta $)]for  any $1\leq h\leq \frac{\tilde n-1}{2}$, \( \overline{K_h} \) is a solid $ (2h-\frac{1+\Delta}{2})$-cone;
    \item [($\eta$)] for any $t>s\geq0$ and $1\leq h\leq \frac{\tilde n-1}{2}$, we have $S_A(t,s) \bigl( \overline{K_h} \setminus \{0\} \bigr) \subset \operatorname{Int} \overline{K_h}$, where $S_A(t,s)$ is the solution operator of linear equation (\ref{LE}).
    \end{enumerate}

    We now give the proof of Theorem \ref{5-1}.
    \begin{proof}[Proof of Theorem \ref{5-1}.]

       Let $\Phi_t$ be the continuous semiflow generated by the system (\ref{finite}), i.e., $\Phi_t(x_0):=x(t;x_0)$ is a solution of (\ref{finite}) with initial point $x_0$. We show it satisfies hypotheses {\rm(\hyperref[H-1]{H1})}-{\rm(\hyperref[H-3]{H3})}.

        {\rm(\hyperref[H-1]{H1})}: On the one hand, as continuous map is compact in $\mathbb{R}^n$, $\Phi_t$ is compact for any $t>0$. On the other hand, condition (c) implies that the system admits a global attractor. Thus, hypothesis {\rm(\hyperref[H-1]{H1})} is satisfied.
        
        {\rm(\hyperref[H-2]{H2})}: The property ($\eta$) implies for any $x,y\in X$ with $x-y\in \overline{K_h} \setminus \{0\}$ for some $1\leq h\leq \frac{\tilde n-1}{2}$, it has $\Phi_t(x)-\Phi_t(y)\in \operatorname{Int} \overline{K_h}$ as $\Phi_t(x)-\Phi_t(y)=S_A(t,0)(x-y)$ for any $t>0$. Thus, by properties ($\zeta $), ($\eta$) and \eqref{kk}, we obtain $\left\{\overline{K_h}\right\}_{h=0}^{(\tilde n+1)/2}$ is an NICs for $\Phi_t$, and more, $\Phi_t$ is UESM w.r.t $\left\{\overline{K_h}\right\}_{h=0}^{(\tilde n+1)/2}$. Thus, hypothesis {\rm(\hyperref[H-2]{H2})} is satisfied.
        
        {\rm(\hyperref[H-3]{H3})}: 
        Let
\[
\Pi=\{x\in\mathbb{R}^n:x_1=0,\ x_n=0\}.
\]
Clearly, $\Pi$ is a linear subspace of codimension $2$.
Fix $1\le h\le (\tilde n+1)/2$, and let two distinct points $x,y\in\mathcal{B}(\Phi)$ satisfy the premise of {\rm(\hyperref[H-3]{H3})}, that is, 
for any $s>0$, there exist $\tilde{x},\tilde{y}\in\mathcal{B}(\Phi)$ with $\Phi_s(\tilde{x})=x$, $\Phi_s(\tilde{y})=y$ satisfying
\begin{equation}\label{h3}
    \Phi_t(\tilde{x})-\Phi_t(\tilde{y})
\in
\bigl(\overline{K_h}\setminus\overline{K_{h-1}}\bigr)\cup\{0\}
\qquad\text{for all }t\ge0.
\end{equation}
Then, we show $x-y\notin\Pi$.

Suppose, to the contrary, that $x-y\in\Pi$.
Set $z(t)=\Phi_t(\tilde{x})-\Phi_t(\tilde{y})$ for $t\ge0$.
Then $z(s)=x-y\in\Pi.$ Clearly, $z(s)\ne0$ as $x\ne y$. Since $x-y\in\Pi$, it has $z_1(s)=z_n(s)=0$, and hence, $z(s)\notin\mathcal N$ by property $(\beta)$. Thus,
$z(s)\notin\mathcal N\cup\{0\}$. Then, by the continuity of $z$, choose
$\varepsilon>0$ sufficiently small such that
$z(s-\varepsilon)\ne0$ and $z(s+\varepsilon)\ne0$. Moreover,
by property $(\gamma)$, for sufficiently small
$\varepsilon$, it has $$N_\Delta(z(s+\varepsilon))<N_\Delta(z(s-\varepsilon))$$ and $z(s-\varepsilon),\,z(s+\varepsilon)
\in
\mathcal N$. However, by \eqref{h3}, it has $z(s-\varepsilon),\,z(s+\varepsilon)
\in \overline{K_h}\setminus\overline{K_{h-1}}$, and hence, $z(s-\varepsilon),\,z(s+\varepsilon)
\in
\mathcal N\cap
\bigl(\overline{K_h}\setminus\overline{K_{h-1}}\bigr)$. Then, by \eqref{2h-1}, it has $$N_\Delta(z(s-\varepsilon))=N_\Delta(z(s+\varepsilon)),$$ a contradiction.
Therefore $x-y\notin\Pi$, and
hypothesis {\rm(\hyperref[H-3]{H3})} is satisfied.
   
   Thus, system (\ref{finite}) satisfies hypotheses {\rm(\hyperref[H-1]{H1})}-{\rm(\hyperref[H-3]{H3})}. 
   
   Moreover, we observe that the restriction of $\Phi_t$ to the global attractor $\Gamma$ can be extended to a continuous flow. Thus, the theorem is followed by Theorem \ref{Thm_2.3}, \ref{Thm_2.7}, \ref{Thm_2.10}, \ref{Cor_2.4} directly.
    \end{proof}

	\subsection{Scalar parabolic equations on the circle }\label{infinite}

	Consider the following class of scalar parabolic equations on the circle $S^1\triangleq\mathbb{R}/(2\pi\mathbb{Z})$:
	\begin{equation}\label{parabolic}
		\begin{cases}
u_t = u_{xx} + f(x, u, u_x), & x \in S^1, \quad t > 0, \\
u(0, x) = u_0(x), & x \in S^1,
\end{cases}
	\end{equation}
    where the nonlinearity $f$ satisfies the following assumptions: 
    \begin{enumerate}
        \item [(A)] $f:S^1\times\mathbb{R}\times\mathbb{R}\to \mathbb{R}$ is of class $C^2$.
        \item [(B)] There exist a continuous function
\(K:[0,\infty)\to[0,\infty)\) and constants
\(\varepsilon\in(0,2)\) and \(\delta>0\) such that
\begin{align*}
|f(x,\xi,p)|
&\le K(r)\bigl(1+|p|^\varepsilon\bigr),
&& (x,\xi,p)\in S^1\times[-r,r]\times\mathbb{R},
   \quad r>0,\\
\xi f(x,\xi,0)
&<0,
&& (x,\xi)\in S^1\times\mathbb{R},
   \quad |\xi|>\delta.
\end{align*}      
    \end{enumerate}
    
    Let \(A:D(A)=H^2(S^1)\subset L^2(S^1)\to L^2(S^1)\) be defined by
\(Au=-u_{xx}\). Then \(A\) is a nonnegative self-adjoint operator
with compact resolvent. Fix \(a>0\) and set \(G:=A+aI\). Choose
\(\alpha\in(3/4,1)\) and define
\[
X:=D(G^\alpha),\qquad
\|u\|_X:=\|G^\alpha u\|_{L^2(S^1)}.
\]
Since \(D(G^\alpha)=H^{2\alpha}(S^1)\) with equivalent norms and
\(2\alpha>3/2\), the Sobolev embedding theorem yields
\(X\hookrightarrow C^1(S^1)\). Under assumptions
\textnormal{(A)} and \textnormal{(B)}, standard
semilinear parabolic theory, together with the a priori estimates
implied by \textnormal{(B)}, ensures that for every $u_0\in X$,
equation~\eqref{parabolic} admits a unique global solution
\[
u(\cdot;u_0)\in C([0,\infty);X),
\]
which is classical for $t>0$. Consequently, the solution operators
\[
\Phi_t(u_0):=u(t,\cdot;u_0)
\]
define a continuous semiflow
\[
\Phi:[0,\infty)\times X\to X;
\]
see \cite[p.~842]{ppolack}.

    For the semiflow \(\Phi\) generated by equation~\eqref{parabolic},
we have the following result.
     \begin{thm}\label{5-2}
	     Every connected component of the Birkhoff center or the support of invariant measures of a system (\ref{parabolic}) is homeomorphic to a compact invariant set in $\mathbb{R}^2$. Furthermore, the topological entropy of the system is zero.
	 \end{thm}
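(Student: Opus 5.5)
The plan is to verify hypotheses (H1)--(H3) for the semiflow $\Phi_t$ on $X=D(G^\alpha)$ with $d=2$. We then check that $\Phi_t$ restricted to the global attractor $\Gamma$ extends to a flow. With these in hand, Theorem \ref{5-2} follows from Theorems \ref{Thm_2.3}, \ref{Thm_2.7}, \ref{Thm_2.10} and \ref{Cor_2.4}, exactly as in the proof of Theorem \ref{5-1}.

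\textbf{(H1) and backward uniqueness.} For (H1), assumption (B) gives dissipativity through a priori $L^\infty$ and $C^1$ bounds. The analytic smoothing of the semigroup generated by $-G$ makes $\Phi_t$ compact for $t>0$, so standard theory yields a compact global attractor $\Gamma$. The flow extension on $\Gamma$ follows from backward uniqueness for parabolic equations: $\Phi_t|_\Gamma$ is injective, hence a homeomorphism of the compact set $\Gamma$.

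\textbf{(H2).} For differences $w=\Phi_t(u)-\Phi_t(v)$ we use the linear equation $w_t=w_{xx}+b(t,x)w_x+c(t,x)w$, whose coefficients are bounded on $\Gamma$-neighbourhoods. The zero number $z(w(t))$ of Matano and Angenent is nonincreasing in $t$ and drops strictly at multiple zeros. It only takes even values $0,2,4,\dots$ on $S^1$.

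The naive cones $\{w:z(w)\le 2i\}\cup\{0\}$ form infinitely many NICs. As the paper warns, however, they fail uniform eventual strong monotonicity even for the heat equation. The reason is that a function whose zero number is $2i$ may need arbitrarily long time to move into the interior when it is very close to the boundary of the cone.

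The remedy I would try is a finite family of perturbed NICs.
\begin{enumerate}[(i)]
\item On the attractor all differences lie in a bounded set. By dissipativity and the spectral gap of the linearized operators, high Fourier modes decay uniformly. So there is $M$ such that every difference of points in $\Gamma$, after time $T$, has zero number at most $2M$.
\item Following the spirit of Tere\v{s}\v{c}\'ak's construction, define for $1\le i\le M$ cones $C^i$ as closed $\epsilon$-thickenings of the set $\{w\ne 0: z(w)\le 2i\}$. One natural choice uses the ratio of the high-mode part to the low-mode part in the Fourier splitting at level $i$: require $\|Q_iw\|\le \kappa\|P_iw\|$, combined with the zero-number condition. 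Set $C^{M+1}=X$.
\item The cone $C^i$ is a solid $(2i+1)$-cone. Invariance comes from the zero-number dropping property together with the cone-field contraction of the linear evolution. Uniform eventual strong monotonicity comes from compactness: the uniform bounds on $b,c$ over $\Gamma$ give a uniform time after which every nonzero $w$ in $C^i$ lands in the interior.
\end{enumerate}
I expect this step, the construction and the uniform interior estimate, to be the main obstacle.

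\textbf{(H3).} Take $\Pi=\{w\in X: w(0)=0,\ w_x(0)=0\}$, which has codimension $2$. Suppose $x,y\in\mathcal B(\Phi)$ satisfy the premise of (H3). Then $z(t)=\Phi_t(\tilde x)-\Phi_t(\tilde y)$ stays in a single layer $C^i\setminus C^{i-1}$ on a time interval around $s$. On that interval the zero number is constant, with value $2i$.

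If $x-y\in\Pi$, then $z(s)$ has a multiple zero at $x=0$. The Angenent dropping lemma then forces $z(w(s-\epsilon))>z(w(s+\epsilon))$. Both of these times fall into the same layer, where the zero number is $2i$, which is a contradiction. This mirrors the argument for Theorem \ref{5-1}. The verification needs the perturbed cones to be designed so that, on differences of points of $\mathcal B(\Phi)$, membership in $C^i\setminus C^{i-1}$ is equivalent to $z=2i$. Ensuring this compatibility is part of the main difficulty noted above.
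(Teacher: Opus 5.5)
Your overall architecture matches the paper's: verify (H1)--(H3) with $d=2$, extend $\Phi_t|_\Gamma$ to a flow by injectivity (backward uniqueness), then apply Theorems \ref{Thm_2.7}, \ref{Thm_2.10} and \ref{Cor_2.4}. Your treatment of (H1) and of the flow extension is fine. However, the step you flag as the main obstacle is genuinely missing, and the mechanism you sketch for it would not work as stated.

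You assert that UESM for the perturbed cones comes from compactness of the linear evolution together with uniform bounds on the coefficients. Every ingredient you cite is also available for the natural zero-number cones $\mathcal C^i$, and these nevertheless fail UESM: Appendix \ref{app:counterexample} produces, for every $b>0$, some $u_0^b\in\mathcal C^3$ with $\Phi_b u_0^b\in\partial\mathcal C^3$. So this reasoning cannot supply a uniform time.

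Your candidate cones are also unverified. For an $\varepsilon$-thickening of $\{z\le 2i\}$ or a Fourier-ratio cone, you do not show the right rank, forward invariance, or entry into the interior after a common time. The dropping property controls $\{z\le 2i\}$ itself, not a neighbourhood of it, and such neighbourhoods contain functions with arbitrarily many zeros (the $G_n$ of Appendix \ref{app:counterexample}). What has to be produced is a finite nest with $C^i\subset\operatorname{Int}C^{i+1}\cup\{0\}$ and a single UESM time $T_0$.

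The paper does not build this by hand. Using $\Phi_tu_0-\Phi_tv_0=T^t_{(u_0,v_0)}(u_0-v_0)$ and properties $(\alpha)$, $(\beta)$, $(\gamma)$, $(\eta)$, it invokes Tere\v{s}\v{c}\'ak's Proposition 6.1. This yields Lemma \ref{XPI}: cones \eqref{NCS} with UESM (i), transversality with respect to the conic neighbourhood $P_\delta\supset\Pi$ (ii), and exponential squeezing outside $C^{N_0}$ (iii).

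Your (H3) argument has a second gap with the same origin. The dropping-lemma proof from the cyclic-feedback case needs each layer $C^i\setminus C^{i-1}$ to correspond to a single value of the zero number. Perturbed layers do not have this property: a drop of $z$ at a multiple zero can occur entirely inside one perturbed layer, and on the top layer $X\setminus C^{N_0}$ the zero number is not constant at all. You acknowledge this compatibility requirement, but it cannot simply be imposed.

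In the paper, transversality comes with the cones themselves. Take $s=T_0$ in the premise of (H3). Then Lemma \ref{XPI}(i) and the premise place $\Phi_{2T_0}\tilde x-\Phi_{2T_0}\tilde y$ in $C^i\setminus C^{i-1}$, so Lemma \ref{XPI}(ii) gives $x-y\notin P_\delta\supset\Pi$ for $1\le i\le N_0$. For $i=N_0+1$, Lemma \ref{XPI}(iii) gives $\|x-y\|_X\le\lambda^s M$ for all $s\ge T_0$, where $M$ bounds the diameter of $\mathcal B(\Phi)$. Hence no distinct pair satisfies the premise.

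Your claim that differences of points of $\Gamma$ have uniformly bounded zero number is the right intuition for this top layer. But decay of high Fourier modes does not by itself bound a zero number; the work is done by the squeezing estimate (iii) combined with backward boundedness on $\mathcal B(\Phi)$.
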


      The proof reduces to verifying hypotheses {\rm(\hyperref[H-1]{H1})}-{\rm(\hyperref[H-3]{H3})}.
We begin by recalling the fundamental zero-number properties for
solutions of the linear equation governing the difference between
two solutions. These properties yield the natural infinite family
of zero-number NICs $\{\mathcal C^i\}_{i=0}^{\infty}$. Starting from this family, we construct in~\eqref{NCS} a finite
family of perturbed NICs, which will be used to verify hypothesis {\rm(\hyperref[H-2]{H2})}.
	
For \(h\in C^1(S^1)\), let
\[
z(h):=\#\{x\in S^1:h(x)=0\}
\]
denote the number
of distinct zeros of \(h\), with the convention that \(z(h)=+\infty\)
if \(h\) has infinitely many zeros. A point \(x_0\in S^1\) is called
a simple zero of \(h\) if \(h(x_0)=0\) and \(h_x(x_0)\ne0\), and a
multiple zero if \(h(x_0)=h_x(x_0)=0\).
\begin{lem}\label{simpe zeros f}
    Suppose $\{f_n\}_{n\in \mathbb N}\subset C^1(S^1)$ with $z(f_n)\le m$. If there is $f\in C^1(S^1)$ such that $f_n\to f $ in $C^1(S^1)$ as $n\to +\infty$, then $f$ has at most $m$ simple
zeros.
\end{lem}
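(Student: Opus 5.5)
We argue by contradiction, using the fact that simple zeros persist under small $C^1$ perturbations. Suppose $f$ has at least $m+1$ distinct simple zeros $x_1,\dots,x_{m+1}\in S^1$. Since these are finitely many distinct points, we can choose $\rho>0$ so small that the closed arcs $I_k=[x_k-\rho,x_k+\rho]$, $k=1,\dots,m+1$, are pairwise disjoint. Shrinking $\rho$ further, we can also arrange, by continuity of $f_x$ and $f_x(x_k)\neq 0$, that $|f_x|\ge c>0$ on each $I_k$ and that $f_x$ has constant sign there. It then follows that $f$ is strictly monotone on $I_k$ with $f(x_k)=0$, so $f(x_k-\rho)$ and $f(x_k+\rho)$ are nonzero and of opposite signs. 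Set $\kappa:=\min_k\min\{|f(x_k-\rho)|,|f(x_k+\rho)|\}>0$.

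Next we use $C^1$ convergence, and in fact only uniform ($C^0$) convergence is needed. Take $n$ large enough that $\|f_n-f\|_{C^0}<\kappa$. Then $f_n(x_k-\rho)$ and $f_n(x_k+\rho)$ have the same signs as $f(x_k-\rho)$ and $f(x_k+\rho)$, hence opposite signs to each other. By the intermediate value theorem, $f_n$ has a zero in the interior of each $I_k$. Since the arcs are disjoint, $f_n$ has at least $m+1$ distinct zeros, so $z(f_n)\ge m+1$, contradicting $z(f_n)\le m$. Hence $f$ has at most $m$ simple zeros.

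No step is genuinely hard. The only point needing care is choosing $\rho$ so that the arcs are disjoint and each $x_k$ is the unique zero of $f$ in $I_k$ with a sign change across it, which is exactly what $f_x(x_k)\neq0$ provides. Multiple zeros of $f$ are deliberately excluded: a multiple zero may arise as the limit of no zeros at all, for example $f_n=x^2+1/n$ locally, which is why the statement counts only simple zeros.
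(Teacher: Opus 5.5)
Your proof is correct and follows the same contradiction argument as the paper, which simply asserts that $f$ has a $C^1$-neighborhood in which every function has more than $m$ simple zeros. You make this persistence step explicit using sign changes and the intermediate value theorem. This also shows that only uniform convergence $f_n\to f$ is needed, because $z(f_n)$ counts all zeros and not just simple ones.
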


\begin{proof}
Suppose, to the contrary, that $f$ has more than $m$ simple zeros.
Then there exists a $C^1$-neighborhood $\mathcal U$ of $f$ such that
every $g\in\mathcal U$ has more than $m$ simple zeros.
Since $f_n\to f$ in $C^1(S^1)$, we have $f_n\in\mathcal U$ for all
sufficiently large $n$. Thus, for all
sufficiently large $n$, it has $z(f_n)>m$, contradicting $z(f_n)\le m$.
Therefore, $f$ has at most $m$ simple zeros.
\end{proof}

   Let \(u(t,\cdot)=\Phi_tu_0\) and \(v(t,\cdot)=\Phi_tv_0\),
\(t\ge0\), be two global solutions of equation~\eqref{parabolic}.
Then their difference \(w(t,\cdot):=u(t,\cdot)-v(t,\cdot)\)
satisfies
\begin{equation}\label{linear}
w_t=w_{xx}+b(t,x)w+c(t,x)w_x,
\qquad t>0,\quad x\in S^1,
\end{equation}
where
\begin{align*}
b(t,x)
&=\int_0^1
\partial_2f\bigl(x,ru(t,x)+(1-r)v(t,x),
ru_x(t,x)+(1-r)v_x(t,x)\bigr)\,dr,\\
c(t,x)
&=\int_0^1
\partial_3f\bigl(x,ru(t,x)+(1-r)v(t,x),
ru_x(t,x)+(1-r)v_x(t,x)\bigr)\,dr.
\end{align*}

Let \(T^t_{(u_0,v_0)}:X\to X\) denote the solution operator
associated with equation~\eqref{linear}. According to
\cite[Theorems~3.4.4 and 7.1.3 and Exercise~10 on p.~196]{He},
\(T^t_{(u_0,v_0)}\) is compact for every \(t>0\), and the map
\[
(u_0,v_0,t)\longmapsto T^t_{(u_0,v_0)}
\]
is continuous from \(X\times X\times(0,\infty)\) into
\(\mathcal L(X)\), where $\mathcal L(X)$ denotes the Banach space of bounded linear
operators on $X$, equipped with the operator norm. Moreover,
\begin{equation}\label{u-v}
\Phi_tu_0-\Phi_tv_0
=
T^t_{(u_0,v_0)}(u_0-v_0),
\qquad u_0,v_0\in X,\quad t>0.
\end{equation}
By \cite[Theorems~B and C]{An}, the following properties hold:
\begin{enumerate}
\item[(a)]
If \(w(0,\cdot)\not\equiv0\), then \(z(w(t,\cdot))<\infty\)
for every \(t>0\).
\item[(b)]
For \(0<s<t\),
\[
z(w(t,\cdot))\le z(w(s,\cdot)).
\]
\item[(c)]
If \(w(t_0,\cdot)\) has a multiple zero for some \(t_0>0\), then
\[
z(w(t_0-\varepsilon,\cdot))
\ge z(w(t_0+\varepsilon,\cdot))+2
\]
for every sufficiently small \(\varepsilon\in(0,t_0)\).

\item[(d)]
If \(w(0,\cdot)\not\equiv0\), then, for every \(s>0\), there are
only finitely many \(t>s\) for which \(w(t,\cdot)\) has a multiple
zero.
\end{enumerate}

We now define the natural zero-number NICs
\(\{\mathcal C^i\}_{i=0}^{N}\) in \(X\), where \(N=+\infty\).
Set \(\mathcal C^0:=\{0\}\), \(\mathcal C^N:=X\), and, for
\(1\le i<N\), define
\[
	\mathcal C^i
:=
\overline{
\left\{
h\in X:
(h(x),h_x(x))\ne(0,0)\ \text{for every }x\in S^1,
\quad z(h)\le 2(i-1)
\right\}
}.
\]
By \cite[Section~7]{Tl}, the following properties hold:
\begin{enumerate}
\item[(\(\alpha\))]
For every \(1\le i<N\), \(\mathcal C^i\) is a solid
\((2i-1)\)-cone.

\item[(\(\beta\))]
For every \(v\in X\setminus\{0\}\), \(t>0\), and
\((u_0,v_0)\in X\times X\), there exists \(j\) with
\(1\le j<N\) such that
\[
T^t_{(u_0,v_0)}v\in\mathcal C^j.
\]

\item[(\(\gamma\))]
For every \(1\le i<N\), \(t>0\),
\((u_0,v_0)\in X\times X\), and
\(v\in\mathcal C^i\setminus\{0\}\), there exists an open
neighborhood \(U\) of \(v\) such that
\[
T^t_{(u_0,v_0)}(U)
\subset\mathcal C^i\setminus\{0\}.
\]

\item[(\(\eta\))]
There exists a codimension-two linear subspace \(\Pi\subset X\)
such that, for every \(1\le i<N\), \(t_1>0\), \(w\in X\), and
\((u_0,v_0)\in X\times X\),
\[
T^{t_1}_{(u_0,v_0)}w
\in
(\mathcal C^i\setminus\mathcal C^{i-1})\cap\Pi
\quad\Longrightarrow\quad
w\notin\mathcal C^i.
\]
\end{enumerate}

\begin{rmk}\label{counterexample}
Although \(\{\mathcal C^i\}_{i=0}^N\) forms a family of NICs
for \(\Phi_t\), the semiflow \(\Phi_t\) is not UESM with respect
to this family; see Appendix \ref{app:counterexample} for a counterexample. Therefore,
the natural zero-number NICs cannot be used directly to verify
hypothesis~{\rm(\hyperref[H-2]{H2})}. The counterexample also shows that
there exists \(j\) with \(1\le j<j+1<N\) such that
\[
\mathcal C^j
\not\subset
\operatorname{Int}\mathcal C^{j+1}\cup\{0\}.
\]
However, by \cite[Proposition~6.1]{Tl}, there exist a positive
integer \(N_0\) and a family of NICs
\(\{C^i\}_{i=0}^{N_0+1}\), constructed from
\(\{\mathcal C^i\}_{i=0}^N\), with respect to which
\(\Phi_t\) satisfies hypothesis~{\rm(\hyperref[H-2]{H2})}; see
Lemma~\ref{XPI} below.
\end{rmk}

Before giving the formal proof of Theorem \ref{5-2}, we still need the following important lemma, which will be very useful for our subsequent proof.

\begin{lem}\label{XPI}
For system (\ref{parabolic}), let $\Pi$ be the subspace specified in property $(\eta)$. Let $\Pi'$ be a complementary subspace of $\Pi$, so that $X=\Pi\oplus\Pi'$, and let $P:X\to \Pi'$ denote the associated projection. For $s\ge 0$, define
\[
P_s=\left\{u\in X:\|P u\|_X \le s\,\|(I-P)u\|_X\right\}.
\]
Then there exist $T_0>0$, $\delta>0$, $\lambda\in[0,1)$, a positive integer $N_0$, and a sequence of sets
\begin{equation}\label{NCS}
    C^0=\{0\},\quad C^{N_0+1}=X,\quad C^i \subset \operatorname{Int} C^{i+1} \cup \{0\} \subset C^{i+1},\quad 1\le i\le N_0
\end{equation}
such that each $C^i$ is a $(2i-1)$-cone and the following properties hold:

\begin{enumerate}
\item[(i)] If $x-y\in C^i\setminus\{0\}$ for an $1\le i\le N_0$, then for all $t\ge T_0$,
\[
\Phi_t x-\Phi_t y \in \operatorname{Int} C^i.
\]

\item[(ii)] If $x-y\in C^i\setminus C^{i-1}$ and $\Phi_{2T_0}x-\Phi_{2T_0}y\in C^i\setminus C^{i-1}$ for an $1\le i\le N_0$, then
\[
\Phi_{T_0}x-\Phi_{T_0}y \notin P_\delta.
\]
\item[(iii)] If $\Phi_t(x)-\Phi_t(y)\notin C^{N_0}$
for some $t\ge T_0$, then
\[
\|\Phi_t(x)-\Phi_t(y)\|_X
\le \lambda^t\|x-y\|_X.\]
\end{enumerate}
\end{lem}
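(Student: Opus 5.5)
The plan is to import Tere\v{s}\v{c}\'ak's abstract construction \cite[Proposition~6.1]{Tl} and check that its hypotheses hold for the linearized cocycle $T^t_{(u_0,v_0)}$ of \eqref{linear}. By \eqref{u-v}, every difference $\Phi_tx-\Phi_ty$ equals $T^t_{(x,y)}(x-y)$, so all three conclusions reduce to statements about the family of compact linear operators $\{T^t_{(u_0,v_0)}\}$.

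\textbf{Step 1: uniform compactness.} Restrict to $(u_0,v_0)$ in a bounded absorbing set containing the global attractor $\Gamma$; this set exists by (B) and standard dissipativity. Using the continuity of $(u_0,v_0,t)\mapsto T^t_{(u_0,v_0)}$ in $\mathcal L(X)$ and compactness of each operator, I get that $\{T^{t}_{(u_0,v_0)}\}$ is a precompact family in $\mathcal L(X)$ for $t$ in a compact interval $[T_0,2T_0]$. For pairs outside this set, I first flow into it and use the cocycle property.

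\textbf{Step 2: building the cones.} By the cone properties $(\alpha)$--$(\gamma)$, each $\mathcal C^i$ is a $(2i-1)$-cone that is invariant and strictly mapped into itself locally. This gives an exponential separation along the flag
\[
\mathcal C^1\subset\mathcal C^2\subset\cdots .
\]
By compactness, only finitely many levels can carry the dominant spectrum, since the Floquet-type exponents tend to $-\infty$ as the zero number grows. I will choose $N_0$ so that the complement of $\mathcal C^{N_0}$ is uniformly contracted. This gives (iii), with $\lambda<1$ after taking $T_0$ large.

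The cones $C^i$ are then defined as slightly fattened versions of $\mathcal C^i$, namely
\[
C^i=\{v:\operatorname{dist}(v/\|v\|,\mathcal C^i)\le\epsilon_i\}\cup\{0\},
\]
with perturbation levels decreasing in $i$. The compactness from Step 1 is used to upgrade local strict invariance $(\gamma)$ to uniform strict invariance after time $T_0$. This yields (i) and the nesting \eqref{NCS}, $C^i\subset\operatorname{Int}C^{i+1}\cup\{0\}$. The fattening is what repairs the failure described in Remark~\ref{counterexample}: the unperturbed $\mathcal C^j\not\subset\operatorname{Int}\mathcal C^{j+1}\cup\{0\}$ because of multiple zeros on the boundary.

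\textbf{Step 3: property (ii).} This follows from $(\eta)$ by a compactness/contradiction argument. Suppose there are sequences with $x_n-y_n\in C^i\setminus C^{i-1}$ and $\Phi_{2T_0}x_n-\Phi_{2T_0}y_n\in C^i\setminus C^{i-1}$, while the normalized middle differences lie in $P_{1/n}$. Normalize, and pass to limits using compactness of $T^{T_0}$. The limit then lies in $\Pi$, in the appropriate layer, and has a preimage in $\mathcal C^i$, which contradicts $(\eta)$.

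The main obstacle is Step 2. The fattening must be chosen so that strict invariance and nesting hold uniformly while layer identification is preserved in the limit in Step 3. In particular, the limit of $C^i\setminus C^{i-1}$ differences must land in $\mathcal C^i\setminus\mathcal C^{i-1}$, which needs the $\epsilon_i$ small relative to the spectral gaps. I would follow Tere\v{s}\v{c}\'ak's construction verbatim, citing \cite[Proposition~6.1]{Tl}, and verify only that its abstract hypotheses are satisfied by $(\alpha)$--$(\eta)$ and the continuity and compactness of $T^t$.
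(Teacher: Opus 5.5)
Your proposal ends up doing exactly what the paper does. By \eqref{u-v}, $\Phi_tx-\Phi_ty=T^t_{(x,y)}(x-y)$; the zero-number cones satisfy $(\alpha)$, $(\beta)$, $(\gamma)$, $(\eta)$, together with the compactness and continuity of $T^t_{(u_0,v_0)}$; and \cite[Proposition~6.1]{Tl} then supplies $T_0,\delta,\lambda,N_0$ and the cones $C^i$ --- that citation is the paper's entire proof.

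Do not present Steps 1--3 as part of the argument, because parts of them fail as written, so the construction has to be taken from \cite{Tl} unchanged. For no $\epsilon_i>0$ is $\{v:\operatorname{dist}(v/\|v\|_X,\mathcal C^i)\le\epsilon_i\}\cup\{0\}$ a $(2i-1)$-cone: the $X$-unit vectors $\sin(nx)/\|\sin(nx)\|_X$ tend to $0$ in $C^1(S^1)$, so for large $n$ they span, together with the trigonometric polynomials of degree $\le i-1$, a $2i$-dimensional subspace contained in it. Also, flowing into an absorbing set does not give a $T_0$ that is uniform over all of $X$.
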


\begin{proof}
    In view of~(\ref{u-v}) and properties~$(\alpha)$, $(\beta)$,
$(\gamma)$, and~$(\eta)$, system~(\ref{parabolic}) satisfies all the
hypotheses of Proposition~6.1 in~\cite{Tl}.
Applying Proposition~6.1 in~\cite{Tl} yields $T_0>0$, $\delta>0$, $\lambda\in[0,1)$, a positive
integer $N_0$, and a family of cones satisfying~(\ref{NCS}) and
properties 
\textnormal{(i)}-\textnormal{(iii)}.
This completes the proof.
\end{proof}

With the above preparations, we now prove Theorem \ref{5-2}.    
\begin{proof}[Proof of Theorem \ref{5-2}.]
    Recall that $\Phi_t$ is the continuous semiflow generated by system (\ref{parabolic}).
    Let
$\{ C^i\}_{i=0}^{N_0+1}$ be the NICs for $\Phi_t$ given by
(\ref{NCS}).
We verify hypotheses {\rm(\hyperref[H-1]{H1})}, {\rm(\hyperref[H-2]{H2})}, and {\rm(\hyperref[H-3]{H3})}.
        
     First, we verify hypothesis~{\rm(\hyperref[H-1]{H1})}.
Equation~\eqref{parabolic} can be written as
\begin{equation}\label{parabolic1}
u_t+Gu=(F+aI)(u),
\end{equation}
where \(G=A+aI\) and \(F(u)(x)=f(x,u(x),u_x(x))\).
Since \(f\in C^2\) and \(X\hookrightarrow C^1(S^1)\), the map
\(F+aI:X\to L^2(S^1)\) is locally Lipschitz continuous.
Moreover, \(G\) is a positive sectorial operator with compact
resolvent. Therefore, by \cite[Theorem~3.3.1]{CT},
\(\Phi_t:X\to X\) is compact for every \(t>0\).
Combining this compactness with the dissipativity condition
\textnormal{(B)}, we conclude that \(\Phi\) possesses a global
attractor \(\Gamma\subset X\). Hence hypothesis~{\rm(\hyperref[H-1]{H1})}
is satisfied.

        Second, we show that hypothesis~{\rm(\hyperref[H-2]{H2})} holds.
By Lemma~\ref{XPI}(i), $\Phi_t$ is UESM w.r.t.\ $C^i$ for every $1\leq i\leq N_0$, with the common UESM time $T_0$.
Hence, $\Phi_t$ is UESM with respect to the NICs $\{C^i\}_{i=0}^{N_0+1}$, and hypothesis~{\rm(\hyperref[H-2]{H2})} holds.

     Finally, we verify hypothesis {\rm(\hyperref[H-3]{H3})} with the
codimension two subspace $\Pi$ specified in property $(\eta)$. 

For $1\le i\le N_0$, and let two distinct points $x,y\in\mathcal{B}(\Phi)$
satisfy the premise of {\rm(\hyperref[H-3]{H3})}, that is, 
for any $s>0$, there exist $\tilde{x},\tilde{y}\in\mathcal{B}(\Phi)$ with $\Phi_s(\tilde{x})=x$, $\Phi_s(\tilde{y})=y$ satisfing
\begin{equation*}
    \Phi_t(\tilde{x})-\Phi_t(\tilde{y})
\in
(C^i\setminus C^{i-1})\cup\{0\}
\quad\text{for all }t\ge0.
\end{equation*}
Taking $s=T_0$.
By Lemma~\ref{XPI}(i), it has $\Phi_{2T_0}(\tilde{x})-\Phi_{2T_0}(\tilde{y})
\in \operatorname{Int}C^i$, and hence, $\Phi_{2T_0}(\tilde{x})-\Phi_{2T_0}(\tilde{y})
\in C^i\setminus C^{i-1}$. Thus, by Lemma~\ref{XPI}(ii), it has $\Phi_{T_0}(\tilde{x})-\Phi_{T_0}(\tilde{y})
\notin P_\delta$. Since $\Pi\subset P_\delta$, it follows that $x-y\notin\Pi.$
Thus, hypothesis {\rm(\hyperref[H-3]{H3})} holds for
$1\le i\le N_0$.

For $i= N_0+1$, we claim that there do not exist two distinct points of $\mathcal{B}(\Phi)$ satisfying the premise of {\rm(\hyperref[H-3]{H3})}. Consequently, hypothesis {\rm(\hyperref[H-3]{H3})} is naturally satisfied for $i= N_0+1$. Now, we prove the claim. Suppose, to the contrary, there are two distinct points $x,y\in\mathcal{B}(\Phi)$ satisfying the premise of {\rm(\hyperref[H-3]{H3})} for $i=N_0+1$. For any $s\ge T_0$, let
$x_s,y_s\in\mathcal{B}(\Phi)$ such that $\Phi_s(x_s)=x$ and $\Phi_s(y_s)=y$. Since $x-y
\notin C^{N_0}$,
Lemma~\ref{XPI}(iii) gives $\|x-y\|_X
\le
\lambda^s\|x_s-y_s\|_X.$
Since $\mathcal{B}(\Phi)$ is compact, there exists $M>0$ such that $\|u-v\|_X\le M$ for all $u,v\in\mathcal{B}(\Phi)$. Hence,
\[
\|x-y\|_X\le \lambda^s M,
\]
which contradicts the fact $x\neq y$, since $s$ is arbitrary. Thus, we have proved the claim, and hence, hypothesis {\rm(\hyperref[H-3]{H3})} also holds for $i= N_0+1$.

Therefore, system~\eqref{parabolic} satisfies hypotheses~{\rm(\hyperref[H-1]{H1})},
{\rm(\hyperref[H-2]{H2})}, and~{\rm(\hyperref[H-3]{H3})}. Thus, by Theorem \ref{Thm_2.7}, every connected component of the Birkhoff center of a system (\ref{parabolic}) is homeomorphic to a compact invariant set in $\mathbb{R}^2$.

Next, we observe that the restriction of $\Phi$ to the global attractor
$\Gamma$ extends to a continuous flow. Indeed, property~\textnormal{(a)} implies that $\Phi_t$ is
injective for every $t>0$. Otherwise, if $\Phi_t u=\Phi_t v$ for some
$u\ne v$, then the difference $\Phi_t u-\Phi_t v$ would vanish
identically and hence have infinitely many zeros, contradicting
property~\textnormal{(a)}. On the other hand, the strict invariance
$\Phi_t(\Gamma)=\Gamma$ implies that $\Phi_t|_\Gamma$ is surjective.
Since $\Gamma$ is compact and $\Phi_t|_\Gamma$ is a continuous
bijection, it is a homeomorphism for every $t>0$. Define
\[
\Phi_{-t}|_\Gamma:=\bigl(\Phi_t|_\Gamma\bigr)^{-1},
\qquad t>0.
\]
The compactness of $\Gamma$, together with the continuity and semigroup
property of $\Phi$, ensures that the resulting extension is a
continuous flow on $\Gamma$. Therefore, by the virtue of Theorem~\ref{Cor_2.4}, the topological entropy of system~\eqref{parabolic} is
zero. 

This completes the proof.
\end{proof}

\appendix
\renewcommand{\thesection}{\Alph{section}}
\setcounter{section}{1}   

\setcounter{section}{0}
\renewcommand{\thesection}{\Alph{section}}
\refstepcounter{section}

\section*{Appendix \thesection: Hausdorff distance and separation index}
\label{app:hausdorff}

\addcontentsline{toc}{section}
{Appendix \thesection: Hausdorff distance and separation index}

\setcounter{equation}{0}
\renewcommand{\theequation}
{\thesection.\arabic{equation}}

\setcounter{thm}{0}
\renewcommand{\thethm}
{\thesection.\arabic{thm}}

In this appendix, we present some notions and properties about Hausdorff distance and separation index, which are used in Section \ref{IRSaNB}.

For any two closed subsets $A,B\subset X$, the \textit{Hausdorff distance} between $A$ and $B$ is defined as $$d_{H}(A,B)=\max\{\sup_{a\in A} \inf_{b\in B}\left \|  a-b \right \|_X,\sup_{b\in B} \inf_{a\in A}\left \|  a-b \right \|_X\}.$$
The \textit{separation index} between $A$ and $B$ is defined as $${\rm \underline{dist}}(A,B)=\inf_{x\in A,y\in B}\left \|  x-y \right \|_X.$$
When $A$ is a compact set and $B$ is a closed set, it has ${\rm\underline{dist}}(A,B)>0$ if and only if $A\cap B=\emptyset$. Let $A,B,C\subset X$ be closed sets, then
\begin{align}
	&d_{H}(x+A,y+B)\leq \left \| x-y \right \|_X+d_{H}(A,B) ;\label{dis_1}\\
	&{\rm\underline{dist}}(A,B)\leq{\rm\underline{dist}}(A,C)+d_H(B,C).\label{dis_2}
\end{align}
Moreover, we have 

\begin{lem}\label{AA-dist}
	For a compact set A in $X$, $d_H\left(\Phi_t A,A\right)$ is continuous w.r.t $t$.
\end{lem}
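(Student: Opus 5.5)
We need to prove Lemma \ref{AA-dist}: for compact $A\subset X$, the map $t\mapsto d_H(\Phi_tA,A)$ is continuous on $[0,\infty)$.

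By the triangle inequality for the Hausdorff distance on nonempty compact sets,
\[
|d_H(\Phi_tA,A)-d_H(\Phi_sA,A)|\le d_H(\Phi_tA,\Phi_sA).
\]
Note that each $\Phi_tA$ is compact, as the continuous image of a compact set. It therefore suffices to show that $d_H(\Phi_tA,\Phi_sA)\to0$ as $s\to t$, for each fixed $t\ge0$.

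For this, fix $t\ge0$ and a compact interval $J=[\max\{0,t-1\},t+1]$. The map $\Phi:\mathbb{R}^+\times X\to X$ is continuous, and $J\times A$ is compact. Hence $\Phi$ is uniformly continuous on $J\times A$. Given $\varepsilon>0$, there is therefore $\rho>0$ such that $\|\Phi_s(a)-\Phi_t(a)\|_X<\varepsilon$ for all $a\in A$ and all $s\in J$ with $|s-t|<\rho$.

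Now take such an $s$. Every point $\Phi_s(a)\in\Phi_sA$ lies within $\varepsilon$ of the point $\Phi_t(a)\in\Phi_tA$. Symmetrically, every point $\Phi_t(a)\in\Phi_tA$ lies within $\varepsilon$ of $\Phi_s(a)\in\Phi_sA$. Both suprema in the definition of $d_H$ are thus at most $\varepsilon$, so $d_H(\Phi_sA,\Phi_tA)\le\varepsilon$.

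Combining this with the first inequality gives continuity at $t$, including one-sided continuity at $t=0$.
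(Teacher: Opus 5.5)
Your proof is correct and complete. The reverse triangle inequality for $d_H$ reduces the claim to the bound $d_H(\Phi_sA,\Phi_tA)\le\sup_{a\in A}\|\Phi_s(a)-\Phi_t(a)\|_X$. Uniform continuity of $\Phi$ on the compact set $J\times A$ then makes the right-hand side small.

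The paper gives no argument of its own here; it simply cites \cite[Lemma 3.1]{SWZ}. Your sup-norm bound is the same device the paper uses in its proof of Lemma \ref{A-dist}, so your version just makes the appendix self-contained.

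Your intermediate fact, that $s\mapsto\Phi_sA$ is continuous in the Hausdorff metric, is stronger than the lemma as literally stated. It is also exactly the form invoked in the proof of Lemma \ref{RC}. There one needs $d_H(\Phi_{t_0+t}(\Omega_x^j),\Phi_{t_0}(\Omega_x^j))<\eta/2$ for all $|t|<\varepsilon_0$, which is two-sided around $t_0$.
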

\begin{proof}
See the proof in \cite[Lemma 3.1]{SWZ}.
\end{proof}

\begin{lem}\label{A-dist}
	Fix $t_1,t_2>0$. Let $A\subset X$ be compact and $B\subset X$ be closed. Then $${\rm \underline{dist}}\left ( \Phi _{t_1}\left ( x \right ) -\Phi _{t_2} \left ( x+A   \right ) ,B  \right )$$ is continuous w.r.t $x$. 
\end{lem}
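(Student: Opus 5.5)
We prove continuity in $x$ by bounding the variation of the set $\Phi_{t_1}(x)-\Phi_{t_2}(x+A)$ in Hausdorff distance and then applying \eqref{dis_2}. Write $S(x):=\Phi_{t_1}(x)-\Phi_{t_2}(x+A)$. Since $A$ is compact and $\Phi_{t_2}$ is continuous, $S(x)$ is compact, hence closed, so both $d_H$ and $\underline{\mathrm{dist}}$ are well defined for it.

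First, \eqref{dis_2} is applied twice with the roles of the sets exchanged. Since $\underline{\mathrm{dist}}$ is symmetric, this gives
\[
\bigl|\underline{\mathrm{dist}}(S(x),B)-\underline{\mathrm{dist}}(S(x'),B)\bigr|\le d_H(S(x),S(x')).
\]
So it suffices to show that $d_H(S(x),S(x'))\to0$ as $x'\to x$. By \eqref{dis_1},
\[
d_H(S(x),S(x'))\le \|\Phi_{t_1}(x)-\Phi_{t_1}(x')\|_X+d_H\bigl(\Phi_{t_2}(x+A),\Phi_{t_2}(x'+A)\bigr).
\]
The first term tends to $0$ by continuity of $\Phi_{t_1}$.

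The main step is the second term. Fix $x$ and consider the compact set $K=\{x+a+v: a\in A,\ \|v\|_X\le1\}$... but a closed unit ball is not compact in an infinite-dimensional $X$. So instead we use the compact set $A$ directly: the map $(y,a)\mapsto \Phi_{t_2}(y+a)$ is continuous, and $\{x\}\times A$ is compact. By a standard tube-lemma argument, for every $\varepsilon>0$ there is $\rho>0$ such that $\|x'-x\|_X<\rho$ implies $\|\Phi_{t_2}(x'+a)-\Phi_{t_2}(x+a)\|_X<\varepsilon$ for all $a\in A$. To see this, suppose it fails. Then there are $x_n\to x$ and $a_n\in A$ with $\|\Phi_{t_2}(x_n+a_n)-\Phi_{t_2}(x+a_n)\|\ge\varepsilon$. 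Passing to a subsequence, $a_n\to a$, and continuity of $\Phi_{t_2}$ at $x+a$ gives a contradiction.

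Pairing each point $\Phi_{t_2}(x+a)$ with $\Phi_{t_2}(x'+a)$ for the same $a$ bounds both one-sided suprema in $d_H$ by $\varepsilon$. Hence the Hausdorff distance is at most $\varepsilon$, which gives the continuity.
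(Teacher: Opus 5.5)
Your proof is correct and follows the paper's argument essentially step for step: you reduce via \eqref{dis_2} to the Hausdorff distance between the two sets, split off the $\Phi_{t_1}$ term with \eqref{dis_1}, and control $\sup_{a\in A}\|\Phi_{t_2}(x'+a)-\Phi_{t_2}(x+a)\|_X$ by the same contradiction argument, extracting a convergent subsequence $a_n\to a$ in the compact set $A$. You can delete the abandoned detour through the set $K$, since it plays no role in the argument.
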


\begin{proof}
	 For any $x,y\in X$, it has
	\begin{equation*}
		\begin{split}
 		& \left | {\rm \underline{dist}}\left ( \Phi _{t_1}\left ( y \right ) -\Phi _{t_2} \left ( y+A   \right ) ,B  \right ) -{\rm \underline{dist}}\left ( \Phi _{t_1}\left ( x \right ) -\Phi _{t_2} \left ( x+A   \right ) ,B  \right )  \right |  \\ \overset{\eqref{dis_2}}{\leq} &d_{H}\left(\Phi _{t_1}\left ( y \right ) -\Phi _{t_2} \left ( y+A   \right ), \Phi _{t_1}\left ( x \right ) -\Phi _{t_2} \left ( x+A   \right )\right) \\\overset{\eqref{dis_1}}{\leq}&\left \| \Phi _{t_1}\left ( y \right )- \Phi _{t_1}\left ( x \right )\right \|_X+d_{H}\left ( \Phi _{t_2} \left ( y+A  \right ) ,\Phi _{t_2} \left ( x+A   \right )\right ) .
		\end{split}
	\end{equation*}
    Since $\Phi_{t_1}$ is continuous, it has $\|\Phi_{t_1}(y)-\Phi_{t_1}(x)\|_X\rightarrow 0$ as $y\rightarrow x$. Together with a fact that $d_{H}\bigl( \Phi_{t_2}(y+A), \Phi_{t_2}(x+A) \bigr) 
	\le \sup_{a \in A} \bigl\| \Phi_{t_2}(y+a) - \Phi_{t_2}(x+a) \bigr\|_X$, 
    the proof of the lemma is complete once we establish that
    \begin{equation}\label{sup0}
        \sup_{a\in A}
\left\|
\Phi_{t_2}(y+a)-\Phi_{t_2}(x+a)
\right\|_X
\rightarrow 0
\ \text{as }y\to x.
    \end{equation}
    
   It remains to prove (\ref{sup0}). If not, there exist $\varepsilon_0>0$ and $\{y_n\}_{n\in \mathbb{N}}$ with $y_n\rightarrow x$ as $n\rightarrow\infty$ such that $\sup_{a \in A} \bigl\| \Phi_{t_2}(y_n+a) - \Phi_{t_2}(x+a) \bigr\|_X>\varepsilon_0$, and hence, there exists $\{a_n\}_{n\in \mathbb{N}}\subset A$ such that \begin{equation}\label{a^*}
        \bigl\| \Phi_{t_2}(y_n+a_n) - \Phi_{t_2}(x+a_n) \bigr\|_X>\frac{\varepsilon_0}{2}.
    \end{equation} 
    On the other hand, since $A$ is compact, there is a point $a$ and subsequence $\{a_i\}_{i\in \mathbb{N}}$ of $\{a_n\}_{n\in \mathbb{N}}$ such that $a_i\rightarrow a$ as $i\rightarrow\infty$. Thus by the continuity of $\Phi_{t_2}$, it has $\bigl\| \Phi_{t_2}(y_i+a_i) - \Phi_{t_2}(x+a_i) \bigr\|_X \rightarrow 0$ as $i\rightarrow \infty$, contradicting (\ref{a^*}). Hence, we have proved (\ref{sup0}) and the proof is complete.
\end{proof}

\refstepcounter{section}

\section*{Appendix \thesection: A counterexample}
\label{app:counterexample}

\addcontentsline{toc}{section}
{Appendix \thesection: A counterexample}

\setcounter{equation}{0}

\newtheorem{lemma}[thm]{Lemma}
\newtheorem{corollary}[thm]{Corollary}

\setcounter{thm}{0}

In this appendix, we give the counterexample mentioned in Remark \ref{counterexample} of subsection \ref{infinite}. We show that, for the heat equation
\[
\begin{cases}
u_t=u_{xx}, & t>0,\ x\in S^1,\\
u(0,x)=u_0(x), & x\in S^1,
\end{cases}
\]
the generated continuous semiflow $\Phi_t$ is not UESM w.r.t the natural zero-number NICs \(\{\mathcal C^i\}_{i=0}^{N}\).

For any $b>0$, let
\[
u_0^b(x)
=
e^b\cos x-\frac14 e^{4b}\cos 2x-\frac34.
\]
We claim that: $$ \textit{ For any }b>0, \textit{ there is }u_0^b\in \mathcal{C}^3, \textit{ but } \Phi_bu_0^b\in\partial \mathcal{C}^3.$$
This claim immediately implies $\Phi_t$ is not UESM w.r.t \(\{\mathcal C^i\}_{i=0}^{N}\).

\begin{proof}[Proof of the claim.]
Clearly, $u_0^b$ has exactly four zeros on $S^1$, all of which are simple, and hence $u_0^b\in \mathcal{C}^3$ for any $b>0$. Thus, it remains to prove $\Phi_bu_0^b\in\partial \mathcal{C}^3$ for any $b>0$. Fix $b>0$. By property ($\gamma$) in Section \ref{infinite}, it has $\Phi_tu_0^b\in \mathcal{C}^3$ for $t\geq0$, and hence, $\Phi_bu_0^b\in \mathcal{C}^3$. 

Then, we show there is a sequence $\{G_n\}_{n\in \mathbb{N}}$ in $X$ such that $G_n\notin\mathcal{C}^3$ for all sufficiently large $n$, and $$\left \|G_n- \Phi_bu_0^b \right \| _X\rightarrow 0\ \text{ as } n\rightarrow +\infty.$$

Let $$G_n(x)= \bigl(\Phi_b u^b_0\bigr)(x) + g_n(x),$$ where \(g_n(x) = \frac{\sin nx}{n^3}\) in \(X\), then $\left \|G_n- \Phi_bu_0^b \right \| _X\rightarrow 0$ as $n\rightarrow +\infty$, since $\left \|g_n\right \| _X\rightarrow 0$ as $n\rightarrow +\infty$. In fact, as Fourier coefficients of the function $g_n(x)$ are given by
	\[
	\hat{g}_n(k) = \frac{1}{2\pi} \int_0^{2\pi} \left( \frac{e^{inx} - e^{-inx}}{2i n^3} \right) e^{-ikx} \,dx,
	\]
	with a fact that the Fourier coefficients of \((A+aI)^\alpha g_n\) are \((k^2 + a)^\alpha \hat{g}_n(k)\) (as \((A+aI)^\alpha e^{ikx} = (k^2 + a)^\alpha e^{ikx}\)), by Parseval's identity, it has
	\[
	\|g_n\|_X^2 = 2\pi\sum_{k=-\infty}^{+\infty} \left| (k^2 + a)^\alpha \hat{g}_n(k) \right|^2
	= \pi\frac{(n^2 + a)^{2\alpha}}{n^6} \to 0 \ \text{as } n \to +\infty.
	\]

Now we prove $G_n\notin\mathcal{C}^3$ for all sufficiently large $n$. For any $n>0$, let 
	\[
	M_n=\left\{ x_{m,n} = \frac{\pi/2 + m\pi}{n} : m \in \mathbb{N},\; x_{m+1,n}< \frac{1}{n^\alpha}  \right\}.
	\]
	A direct calculation gives, for any point $x_{m,n}\in M_n$
	\[
	G_n(x_{m,n}) \in \left( -\frac{1}{8n^{4\alpha}} + (-1)^m \frac{1}{n^3},\; (-1)^m \frac{1}{n^3} \right],
	\]
	which implies \(G_n(x_{m,n}) G_n(x_{m+1,n}) < 0\). Thus, for any $n>0$ and $x_{m,n}\in M_n$, there exists \(\theta_{m,n} \in (x_{m,n}, x_{m+1,n})\) such that \(G_n(\theta_{m,n}) = 0\). 
    
We assert that there exists $N_1>0$ such that, for any $n>N_1$ and $x_{m,n}\in M_n$, $\theta_{m,n}$ is a simple zero of $G_n$. If not, suppose there exists a sequence $n_k\to +\infty$ as $k\to +\infty$ and $x_{m_k,n_k}\in M_{n_k}$ such that $G_{n_k}'(\theta_{m_k,n_k})=0$. Then \begin{equation}\label{xm1}
    \bigl(\Phi_b u^b_0\bigr)(\theta_{m_k,n_k})+\frac{\sin(n_k\theta_{m_k,n_k})}{n_k^3}=0,\quad\bigl(\Phi_b u^b_0\bigr)'(\theta_{m_k,n_k})+\frac{\cos(n_k\theta_{m_k,n_k})}{n_k^2}=0.
\end{equation}
Note that
    $$\left |\bigl(\Phi_b u^b_0\bigr)(\theta_{m_k,n_k})\right |=\frac12(1-\cos\theta_{m_k,n_k})^2\le \frac{1}{8}\left(\theta_{m_k,n_k}\right )^4$$
and 
\[
\left|\bigl(\Phi_bu_0^b\bigr)'(\theta_{m_k,n_k})\right|
=(1-\cos\theta_{m_k,n_k})\sin\theta_{m_k,n_k}
\le
\frac{1}{2}(\theta_{m_k,n_k})^3.
\]
Together with (\ref{xm1}) and $\theta_{m_k,n_k}<x_{m_k+1,n_k}<\frac{1}{n_k^\alpha}$, it has
$$|\sin(n_k\theta_{m_k,n_k})|
< \frac18 n_k^{3-4\alpha},\quad |\cos(n_k\theta_{m_k,n_k})|
< \frac12 n_k^{2-3\alpha}.$$
Since $\alpha > \frac{3}{4}$, there exists $N>0$ such that for all $n_k > N$, it has
$$|\sin(n_k\theta_{m_k,n_k})| < \frac{1}{2}, \quad |\cos(n_k\theta_{m_k,n_k})| < \frac{1}{2}.$$
Consequently, it has
$$\sin^2(n_k\theta_{m_k,n_k})+\cos^2(n_k\theta_{m_k,n_k})
< \frac12,$$
contradicting the identity $\sin^2(\cdot)+\cos^2(\cdot) =1$.
Therefore, the assertion holds.

Consequently, for any $n>N_1$, $G_n$ has simple zero in $(x_{m,n}, x_{m+1,n})$, and thus 
$$\#\{\theta: G_{n}(\theta)=0, G'_{n}(\theta)\neq0\}\geq\#M_n-1\ge\frac{n^{1-\alpha}}{\pi}-3,$$
where $\#$ is denoted the amount of elements of a set. Thus, for any 
$$n > \max \left\{ N_1, \left( 8\pi \right)^{\frac{1}{1-\alpha}} \right\},$$ $G_n$ has more than four simple zeros on $S^1$, and hence $G_n\notin\mathcal{C}^3$ by Lemma \ref{simpe zeros f}.

Thus, we have obtained $\Phi_b u^b_0 \in \partial \mathcal{C}^3$ and completed the proof.
\end{proof}

\end{document}